\newtheorem{Assumption}{Assumption}
\newtheorem{Proposition}{Proposition}
\newtheorem{Lemma}{Lemma}
\newtheorem{Theorem}{Theorem}
\newtheorem{Corollary}{Corollary}
\newtheorem{Remark}{Remark}
\newtheorem{Definition}{Definition}
\newtheorem{example}{Example}
\DeclareMathOperator{\R}{\mathbb{R}}
\DeclareMathOperator{\pd}{\partial}
\DeclareMathOperator{\Cov}{Cov}
\def\qed{\relax\ifmmode\hskip2em \Box\else\unskip\nobreak\hskip1em $\Box$\fi}
\newcommand{\proper}{\mathsf}
\newcommand{\pP}{\proper{P}}
\newcommand{\pE}{\proper{E}}
\newcommand{\pV}{\proper{V}}
\newcommand{\pN}{\proper{N}}
\newcommand{\mv}[1]{{\boldsymbol{\mathrm{#1}}}}
\newcommand{\E}{{\bf \mathsf{E}}}
\newcommand{\V}{{\bf \mathsf{V}}}
\renewcommand{\P}{\mathbb{P}}
\newcommand{\Q}{\mathbb{Q}}
\newcommand{\Sker}{S^{\text{ker}}}
\newcommand{\Ssker}{S^{\text{sta}}}
\newcommand{\Strans}{S^{\text{trans}}}
\DeclareMathOperator{\logs}{LS}
\DeclareMathOperator{\crps}{CRPS}
\DeclareMathOperator{\scrps}{SCRPS}
\definecolor{darkgreen}{rgb}{0.2, 0.7, 0.2}
\newcommand{\changed}[1]{#1}
\title[Scale invariance and robustness of scoring rules]{Local scale invariance and robustness of proper scoring rules}
\author{David Bolin$^A$ and Jonas Wallin$^B$}
\email{{\normalfont{(D.~Bolin, corresponding author)}} david.bolin@kaust.edu.sa}
\address[D.~Bolin and J.~Wallin]{
	$^A$CEMSE Division, King Abdullah University of Science and Technology, Saudi Arabia $^B$Department of Statistics, Lund University, Sweden}
\begin{document}

\maketitle

	\begin{abstract}
	Averages of proper scoring rules are often used to rank probabilistic forecasts. 
In many cases, the individual terms in these averages are based on observations and forecasts from different distributions.
		We show that some of the most popular proper scoring rules, such as the continuous ranked probability score (CRPS), give more importance to observations with large uncertainty which can lead to unintuitive rankings. 
	To describe this issue, we define the concept of local scale invariance for scoring rules.  A new class of generalized proper kernel scoring rules is derived and as a member of this class we propose the scaled CRPS (SCRPS). This new proper scoring rule is locally scale invariant and therefore works in the case of varying uncertainty. Like CRPS it is computationally available for output from ensemble forecasts, and does not require the ability to evaluate densities of forecasts.
	
	We further define robustness of scoring rules, show why this also is an important concept for average scores, and derive new proper scoring rules that are robust against outliers. The theoretical findings are illustrated in three different applications from spatial statistics, stochastic volatility models, and regression for count data. 
\end{abstract}

\keywords{Proper scoring rules; 
	probabilistic forecasting;
	model selection;
	robustness;
spatial statistics.}

\section{Introduction}
A popular way of assessing the goodness-of-fit of statistical models is to quantify their predictive performance. 
This is often done by evaluating the accuracy of point predictions, using for example mean-squared errors between the predictions and the observed data. However, one is typically also interested in the ability to correctly quantify the prediction uncertainty. 
To fully quantify the prediction uncertainty one must use the entire predictive distribution, which often is referred to as probabilistic forecasting. The main method for summarizing the accuracy of probabilistic forecasts is to use averages of proper scoring rules.

Informally, a scoring rule $S(\cdot,\cdot)$ is a bivariate function that for a probability measure $\mathbb{P}$, representing a forecast, and an observed outcome $y$ returns a real number $S(\mathbb{P},y)$.
We use $S(\P,\Q)$ to denote the expected value of $S(\P,y)$ when $y \sim \Q$. The scoring rule is said to be proper if $S(\Q,\Q) \geq S(\P,\Q)$, and strictly proper if equality holds if and only if $\P=\Q$ \citep{gneiting2007strictly}. Using a strictly proper scoring rule for forecast ranking has the desirable property that, in the long run,  the best forecast is always the true distribution $\Q$. This can be seen as keeping the forecaster earnest, in the sense that he or she should always use the estimate of the probability distribution that is being predicted to get the best expected score.

Probabilistic forecasting, and the evaluation of such forecasts through proper scoring rules, is 
	used in a wide range of applications, from climate and weather prediction \citep{Palmer2002,brocker2012evaluating,techECMWF} to finance and macroeconomics \citep{garratt2003forecast,opschoor2017combining,nolde2017elicitability}. Scoring rules are routinely used in spatio-temporal statistics for model comparisons \citep{Heaton2019}, and the development of new scoring rule methods tailored for specific applications, such as extreme value statistics \citep{Lerch2017, taillardat2019extreme}, is an active research area. Over the past thirty years, the  theory of forecast evaluation using proper scoring rules has also advanced significantly, see \citet{gneiting2007probabilistic} for a review, and see \citet{gneiting2011comparing, Parry2012, dawid2014theory,Dawid2016} for more recent advancements in the field.

Throughout this work, we will mostly focus on the case of univariate and real-valued forecasts, where three popular proper scoring rules are the log-score, the Hyv{\"a}rinen score,  and the continuous ranked probability score (CRPS). Variants of these have been used in several different fields of research and applications, such as electricity price forecasting \citep{nowotarski2018recent}, wind speed modeling \citep{baran2016mixture,lerch2013comparison}, financial prediction \citep{opschoor2017combining}, precipitation modeling \citep{ingebrigtsen2015estimation}, and spatial statistics \citep{fuglstad2015does}. 

The log-score is defined as 
	$\logs(\P,y) = \log f(y)$, where $f(\cdot)$ denotes the density of $\P$ \citep{Good1952, Bernardo1979}, and has many desirable features \citep[see,e.g.][]{Roulston2003}. However, it has also been noted that it lacks robustness \citep{gneiting2007probabilistic}, which \citet{selten1998axiomatic} denoted as being hypersensitive. This is something that we will get back to later in this work. The Hyv{\"a}rinen score \citep{hyvarinen2005estimation} is also based on the density function of $\P$, and is defined as
	$$
	S(\P,y) =- \Delta  \log f(y) - \frac{1}{2} \left|\nabla \log f(y) \right|^2.
	$$
	This scoring rule is interesting since $S(\P,y)$ is unchanged if one multiplies $f$ by a constant. This means that it can be computed without knowing the normalizing constant of $f$, which 
	\citet{Parry2012} denotes as being ``homogeneous in the density function''.

The CRPS is defined as
\begin{align}\label{eq:crps_def}
	\crps(\P,y) &= - \int (F(x) - 1(x\geq y))^2 dx = \frac1{2}\pE_{\P,\P}[|X-Y|] - \pE_{\P}[|X-y|],
\end{align}
where $F(\cdot)$ is the cumulative distribution function of $\P$ \citep{gneiting2007strictly}, and $\E_{\P}[g(X)]$ denotes the expected value of a function $g(X)$ of a random variable $X\sim\P$. Furthermore, $\E_{\P,\Q}[g(X,Y)]$ denotes the expected value of $g(X,Y)$ when $X\sim\P$ and $Y\sim\Q$ are independent. 
It can be noted that the CRPS does not require the ability to evaluate the density of the forecast and thus is ideal for ensemble forecasts. Therefore, it is a main tool for verifying weather forecasts for continuous data \citep{brocker2012evaluating,hagelin2017met,descamps2015pearp,techECMWF}.

Two important concepts in probabilistic forecast evaluation is sharpness and calibration.  Calibration refers to how well the forecast $\P$ and the true target $\Q$ agrees, while sharpness is a property of $\P$ only and describes how concentrated the prediction measure is.  Whereas many comparative studies of the predictive performance of probabilistic forecasts have focused on calibration \citep[see, e.g.,][]{Moyeed2002}, \cite{gneiting2007probabilistic} argued that there should be a greater focus on assessing also the sharpness, in particular when the goal is to rank probabilistic forecasts. They proposed the paradigm of  ``maximizing the sharpness of the predictive distribution subject to calibration''  in order to evaluate predictive performance. This means that we want a forecast that has as small variability as possible while being calibrated. An important property of proper scoring rules is that they can simultaneously address calibration and sharpness. For example, the CRPS can be written as $\crps(\P,y) = \int \text{BS}(\P,y) dy$, where $\text{BS}(\P,y) = - (F(x) - 1(x\geq y))^2$ is the Brier score \citep{brier1950verification}, which can be decomposed into a calibration component and a refinement component \citep{murphy1972scalar, degroot1983comparison}. Therefore, also the CRPS can be decomposed similarly \citep{candille2005evaluation}.

The predictive performance of a model for a set of observations $\{y_i\}_{i=1}^n$ is typically assessed using an average, aka.\ composite \citep{dawid2014theory}, score 
\begin{equation}\label{eq:average_score}
	S_n = \frac1{n}\sum_{i=1}^n S(\P_{i},y_i),
\end{equation}
where $\P_{i}$ denotes the predictive distribution for $y_i$ based on the model. These different predictive distributions might, for example, be one-step-ahead predictions in a time series model or leave-one-out cross-validation predictions for a model in spatial statistics.  If $S$ is a proper scoring rule then the average score is also proper.  Therefore, the use of average scores for model comparison is natural, but we will show that it may lead to unintuitive forecast rankings if popular scoring rules such as the CRPS or the Hyv{\"a}rinen score are used. There are two main reasons for this: Firstly, there is always some degree of model misspecification for the models that are compared, and one model is rarely  best for all observations. Secondly, the predictive distributions  typically vary between different observations, and depending on the scoring rule, each observation may therefore not be equally important for the average score. 

A situation where the latter problem occurs is when the observations have varying degrees of predictability, which often causes the variances of the predictive distributions to be different.
This, for example, occurs frequently for weather and climate data which have large spatial and temporal variability in their predictability \citep{campbell2005}. In this case, when evaluating the score of model through the average score \eqref{eq:average_score} one does not evaluate a single predictive distribution, $\P$, against a single truth, $\Q$,  but rather a set of model-generated predictions $\{\P_i\}$ against  a set of truths $\{\Q_i\}$. This is an overlooked factor in many applications of forecast evaluation through the use of proper scoring rules. To illustrate this, we consider the following simple example in the spirit of those in \cite{gneiting2007probabilistic}.

	\begin{example}
		\label{ex:toy}
		Suppose that nature generates data from the  model $Y_i | \sigma^2_i \sim  \mathcal{N}\left(0, \sigma^2_i \right)$, where $\sigma^2_i \sim   \pi(\sigma^2)= \frac{7}{8}\delta_{\frac{8}{14}}(\sigma^2) + \frac{1}{8}\delta_{\frac{8}{2}}(\sigma^2)$.
		There are four different forecasters: The ideal forecaster $\P^1_i = \mathcal{N}\left(0,\sigma^2_i  \right)$, the climatological forecaster (which uses the marginal distribution)   $ \P^2_i =  \int  \mathcal{N}\left(0, \sigma^2 \right) $ $\pi(\sigma^2) d\sigma^2$, the confident $\P^3_i = \mathcal{N}\left(0,\frac{8}{14} \right) $ and the pessimistic $\P^4_i = \mathcal{N}\left(0, \frac{8}{2}\right)$.  If a proper scoring rule is used, the ideal forecaster should be best in the long run; however, the ranking of the other three will depend on which scoring rule that is used.
		We generate $2000$ observations and rank the forecasters using the three previously mentioned scoring rules through average scores. The ranking is shown in Table \ref{tab:rank}. Note that the CRPS ranks the confident forecaster as the second best, whereas the Hyvärinen score ranks this as the worst forecaster. As the log score, the  Hyv{\"a}rinen score ranks the climatological forecaster as second but it ranks the pessimistic forecaster higher than the confident.
		\begin{table}
			\label{tab:rank}
			\caption{Rankings of the forecasts for each score in Example~\ref{ex:toy}. The value in the parentheses is the score value.}
			\begin{tabular}{ccccc}
				\hline
				Forecaster & density& CRPS & log score & Hyv{\"a}rinen score \\
				\hline
				Ideal & $\mathcal{N}\left(x;0,\sigma^2_i  \right)$ 
				&  1(-0.52)  & 1 (-1.27) & 1 (0.76)\\
				Climatological & $   \frac{7}{8}\mathcal{N}\left(x;0,\frac{8}{14}  \right) + \frac{1}{8}\mathcal{N}\left(x; 0,\frac{8}{2}  \right)$  
				&3 (-0.55) & 2 (-1.36)& 2 (0.65) \\
				Confident & $  \mathcal{N}\left(x;0, \frac{8}{14}   \right) $ 
				&2 (-0.54)  & 3 (-1.52)& 4 (0.2) \\
				Pessimistic & $  \mathcal{N}\left(x;0,\frac{8}{2}   \right) $  
				& 4 (-0.65) &4 (-1.74)& 3(0.22) \\
				\hline
			\end{tabular}
		\end{table} 
\end{example}

The preferred ranking of the forecasters will naturally depend on the context of how the forecasts are used, but the forecast evaluator should be aware of the fact that the use of a particular scoring rule implicitly will determine how important different observations are. To guide the evaluator with respect to this we will later introduce the concept of scale functions of scoring rules.

One previously proposed strategy for dealing with situations where on has varying uncertainty is to
use so-called skill scores, which typically take the form
$(S_n - S_n^{ref})/S_n^{ref}$ or $(S_n - S_n^{ref})/(S_n^{opt} - S_n^{ref})$.
Here $S_n$ is the score by the forecaster, $S_n^{ref}$ is a score for a reference method \citep{Winkler1996}, and $S_n^{opt}$ is a hypothetical optimal forecast  \citep[][Chapter 7.1.4]{wilks2005}. This standardization may seem natural since the score equals $1$ for the optimal forecast, is positive whenever the forecaster is better than the reference, and negative otherwise. It could also solve the problem with varying predictability if this information is present in the reference method. However, skill scores are in general improper even if they are based on a proper scoring rule \citep{murphy1973, gneiting2007strictly}. 

The main focus of this work is to introduce proper alternatives to skill scores. To that end, we  will define and analyze properties for average scoring rules in the case of varying predictability, by examining differently scaled observations and predictions.  %

One of our main results is to propose a method for standardizing proper scoring rules in a way so that they remain proper. More specifically, the main contribution of this work is twofold:
\begin{enumerate}
	\item[(i)] We introduce two properties of scoring rules which are important if average scores are used for forecast evaluation: Local scale invariance and robustness. We show that popular scoring rules such as the CRPS, he Hyv{\"a}rinen score, the mean square error (MSE), and mean absolute error (MAE) lack both of these properties and illustrate why this can be a problem through several examples and applications in spatial statistics, regression modeling, and finance. 
	\item[(ii)] We derive a new class of proper scoring rules that maintain the good properties of the CRPS, such as easy-to-use expressions that facilitate straight-forward Monte Carlo (or ensemble) approximations of the scores in cases when the density of the model is unknown. Among these new proper scoring rules, which can be seen as generalizations of the proper kernel scores \citep{dawid2007geometry}, we show that there are some special cases which are scale invariant and some that are robust, which solves the problems encountered in the above-mentioned applications.
\end{enumerate}
An example of a new proper scoring rule from the proposed class is the scaled CRPS (SCRPS) 
\begin{align}\label{eq:scrps}
	\scrps(\P, y) :=& - \frac{\E_{\P}\left[ |X-y| \right]}{\E_{\P,\P}\left[ |X-Y| \right]} - \frac{1}{2}\log\left( \E_{\P,\P}\left[ |X-Y| \right]\right)\\
	=& -\frac1{2}\left(1 + \frac{\crps(\P,y)}{\crps(\P,\P)}+\log(2|\crps(\P,\P)|)\right),\notag
\end{align}
where the second equality is obtained using the definition of the CRPS in \eqref{eq:crps_def}. Compared to the CRPS, the SCRPS has the desirable property that the penalty of an incorrect prediction, $\E_{\P}\left[ |X-y| \right]$, is standardized by $\E_{\P,\P}\left[ |X-Y| \right]$. This means that if the prediction expects a large uncertainty, then the penalty of making a large error is down-weighted. This is essential for being locally scale invariant, which we show that the SCRPS is. 
Since the expectations used in CRPS and SCRPS are the same, one can compute them equally easily for ensemble forecasts, and in fact, from the second equality we see that we can compute the SCRPS for any distribution where we can compute the CRPS and its expectation. Thus, the SCRPS is relatively easy to compute for many standard distributions, and as an example we provide analytic expression for Gaussian distributions in Appendix~\ref{app:gauss}.
	The structure of the article is as follows.  In Section~\ref{sec:scale} and Section~\ref{sec:robust}, we respectively introduce the concepts of local scale invariance 
	and robustness for scoring rules, and provide illustrative examples of why the lack of these properties may lead to unintuitive conclusions when ranking forecasts. In Section~\ref{sec:kernel_scores}, we analyze the class of kernel scores, which contains the CRPS as a special case, in terms of robustness and local scale invariance. In the section, we also propose a robust version of the CRPS. 
	In Section~\ref{sec:standardized_scores} a new general family of scoring rules, of which the SCRPS is a special case, is introduced and analyzed in terms of local scale invariance and robustness. Section~\ref{sec:application} presents three different applications where we illustrate the benefits of the new scoring rules. The article concludes with a discussion in Section~\ref{sec:discussion}. The article contains three appendices with (\ref{app:gauss}) formulas for the new scoring rules for Gaussian distributions; (\ref{sec:entropy}) a characterization of scoring rules in terms of generalized entropy; and (\ref{sec:proofs}) proofs of certain propositions.
	


\section{Local scale invariance of scoring rules}\label{sec:scale}
In this section, we define the concept of local scale invariance of proper scoring rules. We begin by some motivating examples, then provide the mathematical definition and some properties, and finally provide a discussion about local scale invariance and related concepts.

\subsection{Motivation}\label{sec:motivation}
As previously noted, the predictive measures $\P_{i}$ in \eqref{eq:average_score} often have varying uncertainty. Two common causes for this are non-stationary models and irregularly spaced observation locations. 
In the case of varying uncertainty, the magnitude of the value given by the scoring rule at each location can depend on the magnitude of this uncertainty. This is what we will refer to as scale dependence, or the lack of local scale invariance. 

If the average score \eqref{eq:average_score} is used to compare the models, the scale dependence will make the different observations in the sum contribute differently to the average score. In other words, the importance of the predictive performance for the different observations will depend on how the scoring rule depends on the uncertainty. This means that emphasis may be put on accurately predicting observations with certain scales, which can cause unintuitive results. 

\begin{figure}[!t]
	\begin{center}
		\begin{minipage}[t]{0.24\linewidth}
			\begin{center}
				\phantom{y}CRPS\\
				\includegraphics[width=\textwidth]{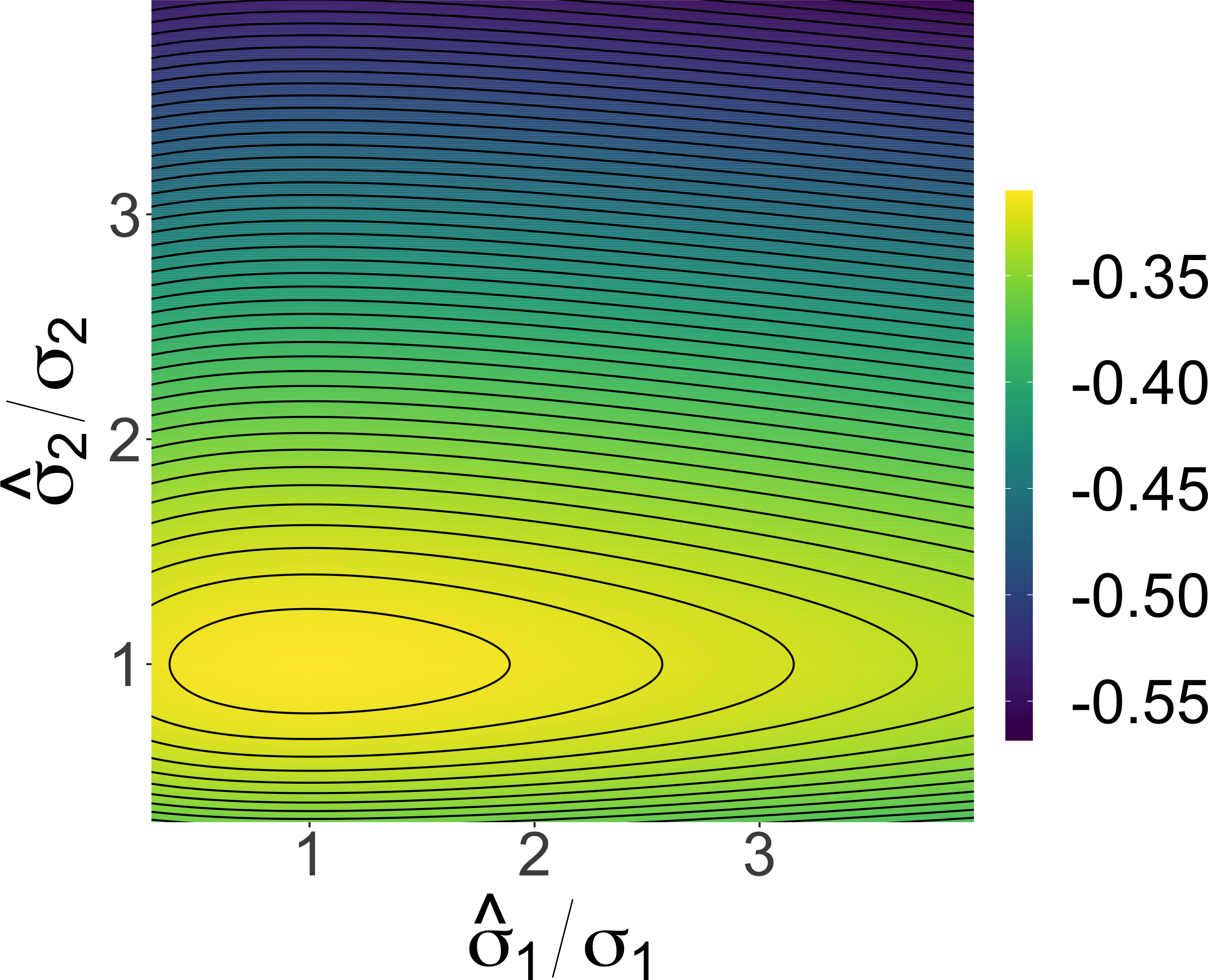}\\
				\includegraphics[width=\textwidth]{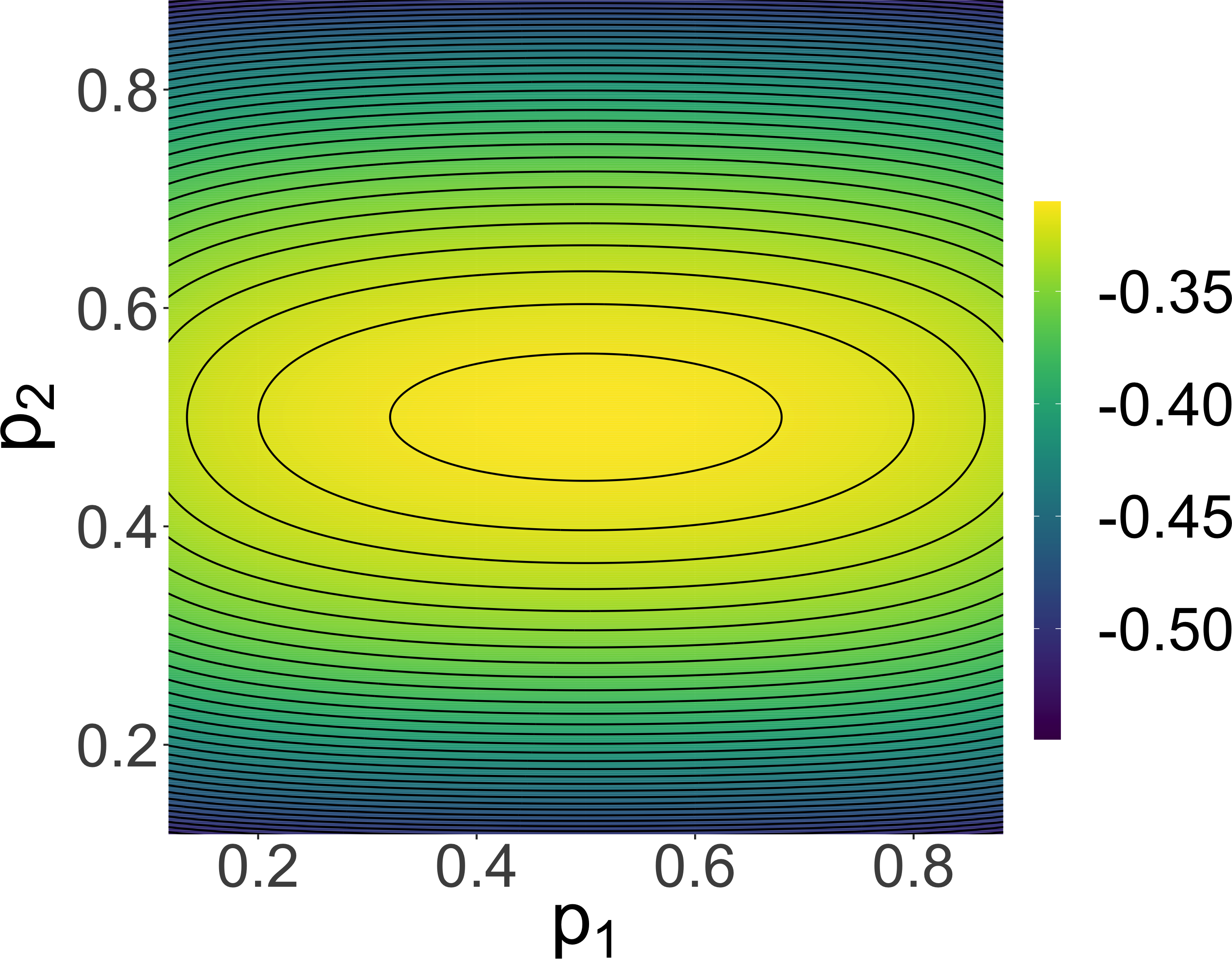}
			\end{center}
		\end{minipage}
		\begin{minipage}[t]{0.24\linewidth}
			\begin{center}
				\phantom{y}log(LS)\\
				\includegraphics[width=0.93\textwidth]{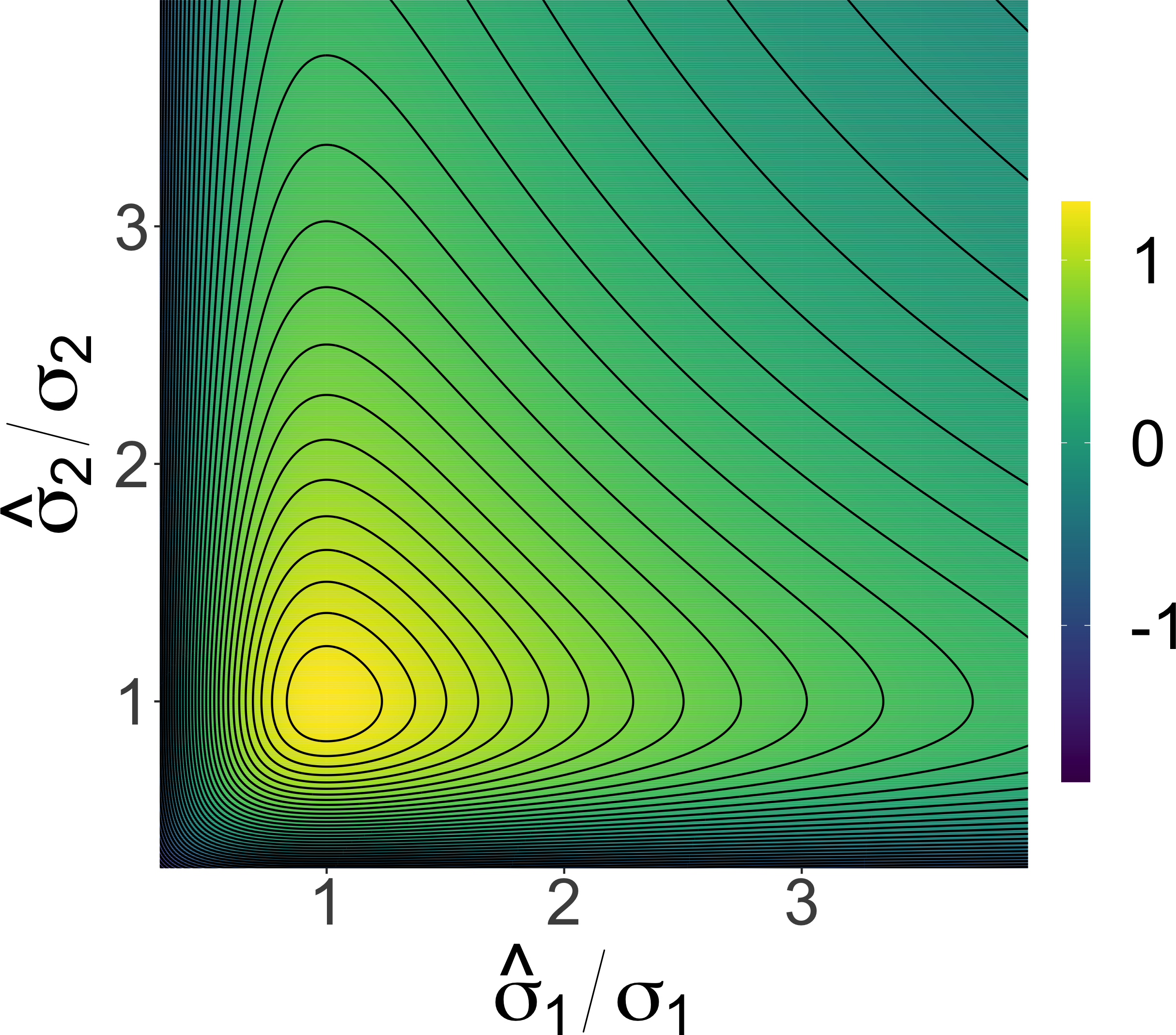}\\
				\includegraphics[width=0.96\textwidth]{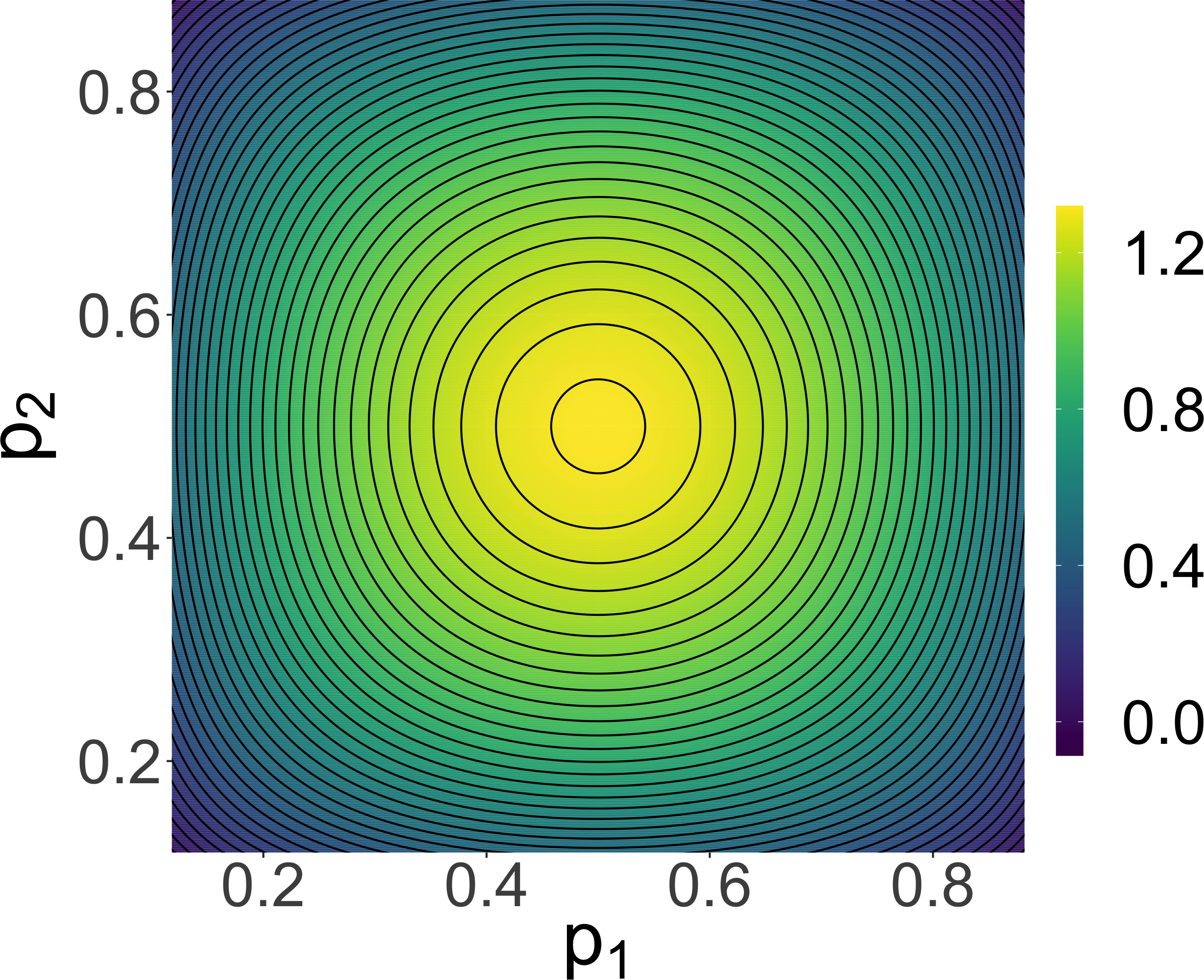}
			\end{center}
		\end{minipage}
		\hspace{0.04cm}
		\begin{minipage}[t]{0.24\linewidth}
			\begin{center}
				\phantom{y}SCRPS\\
				\includegraphics[width=\textwidth]{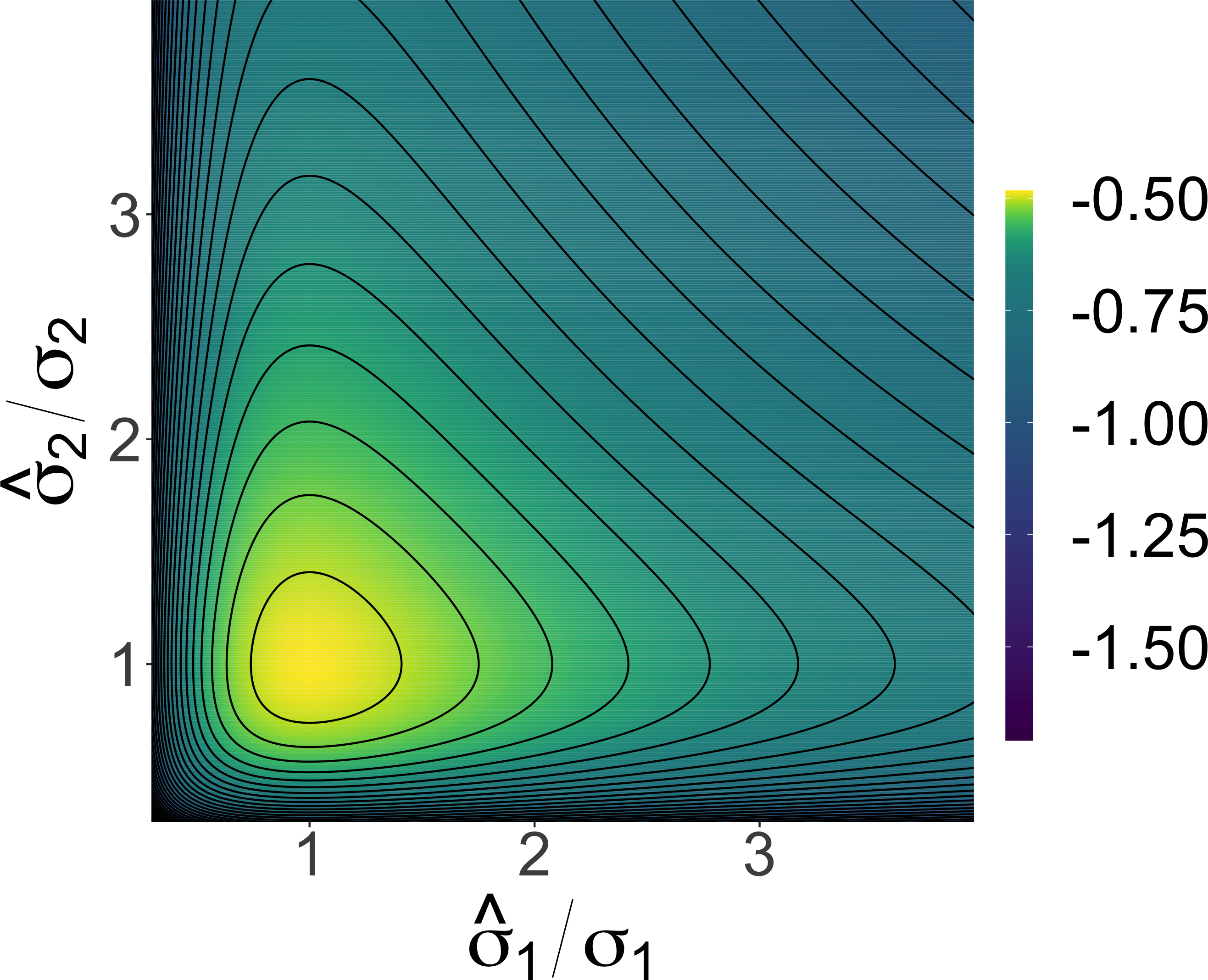}\\
				\hspace{-0.2cm}\includegraphics[width=0.975\textwidth]{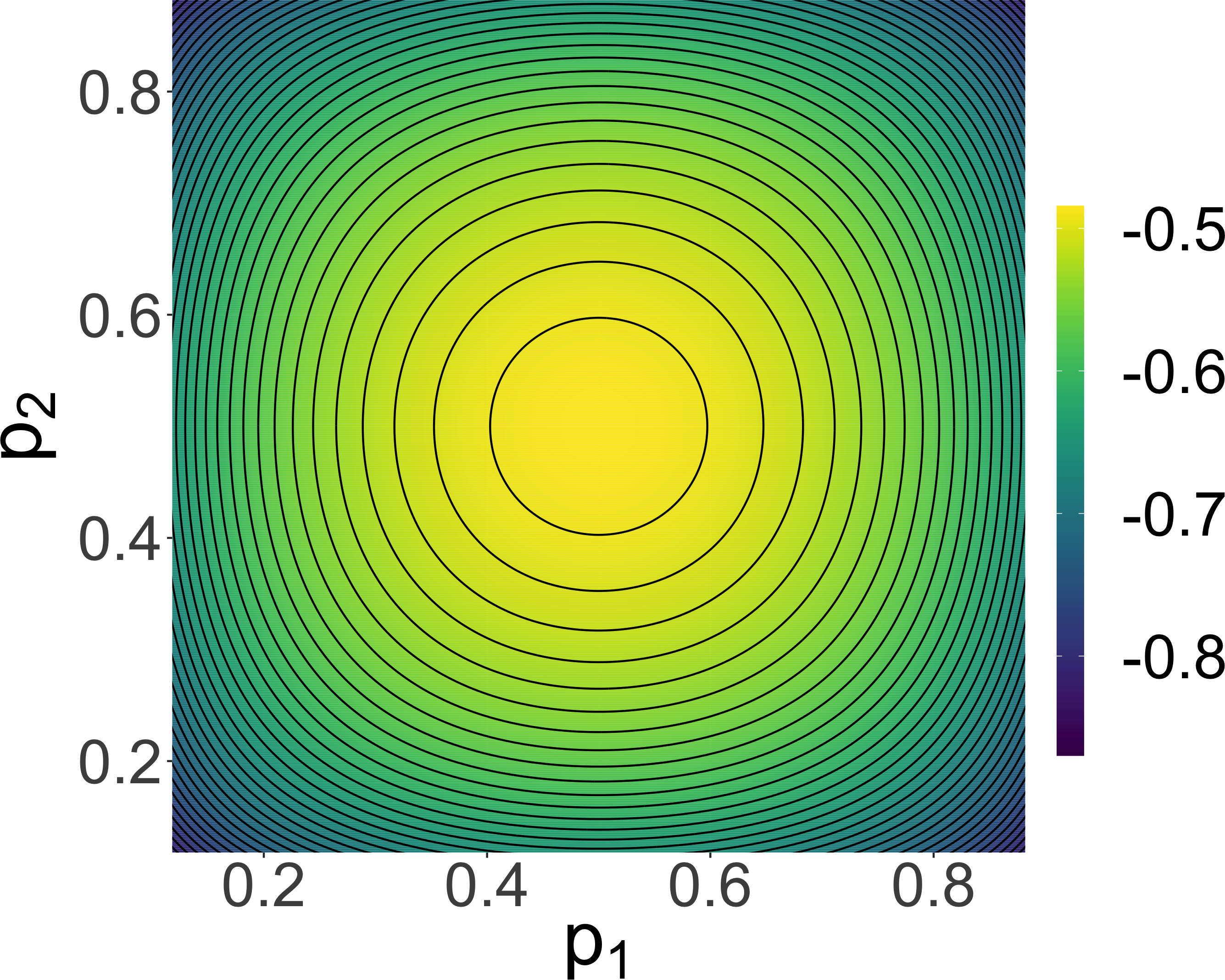}
			\end{center}
		\end{minipage}
		\begin{minipage}[t]{0.24\linewidth}
			\begin{center}
				Hyvärinen\\
				\includegraphics[width=0.96\textwidth]{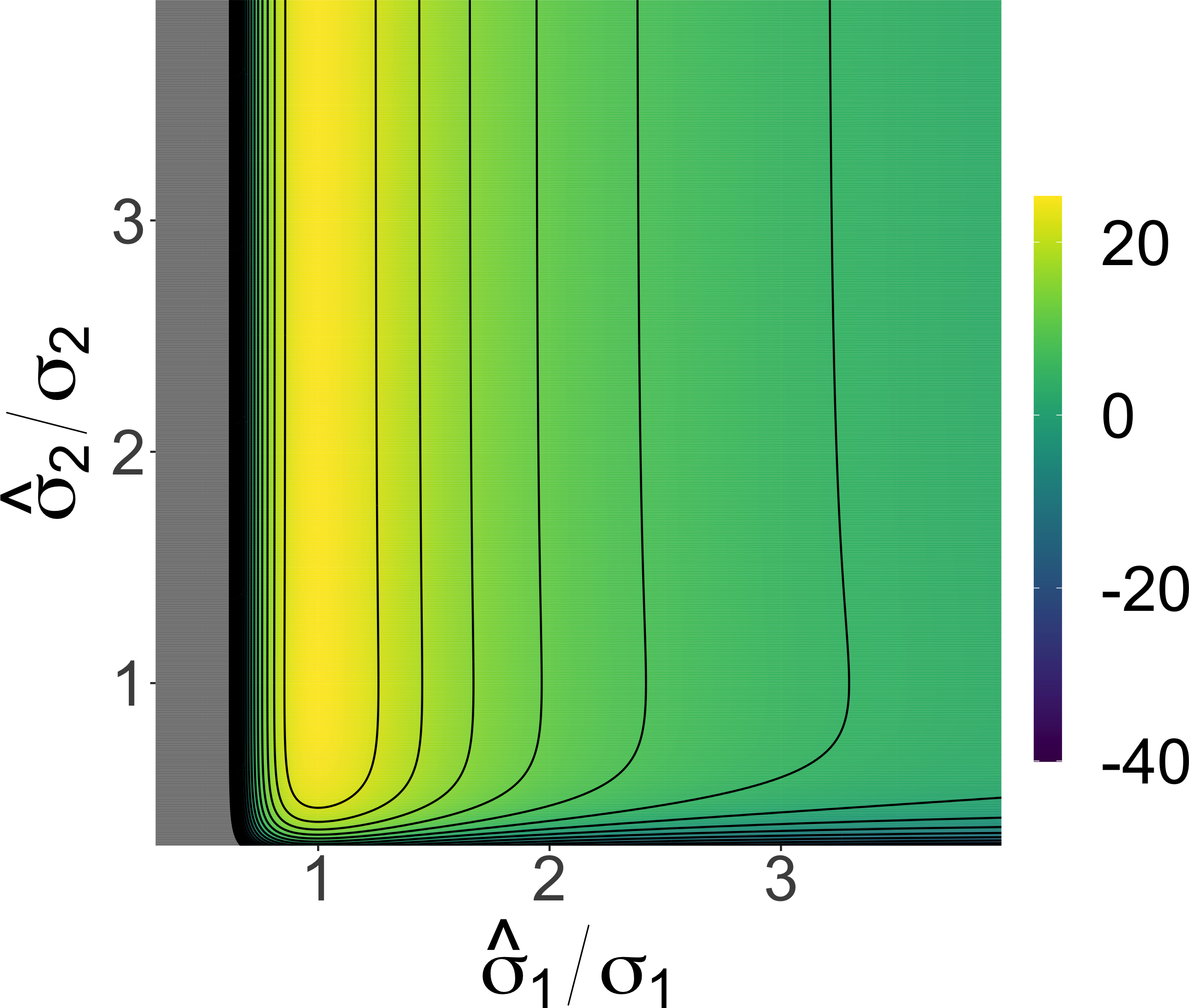}\\
				\includegraphics[width=0.96\textwidth]{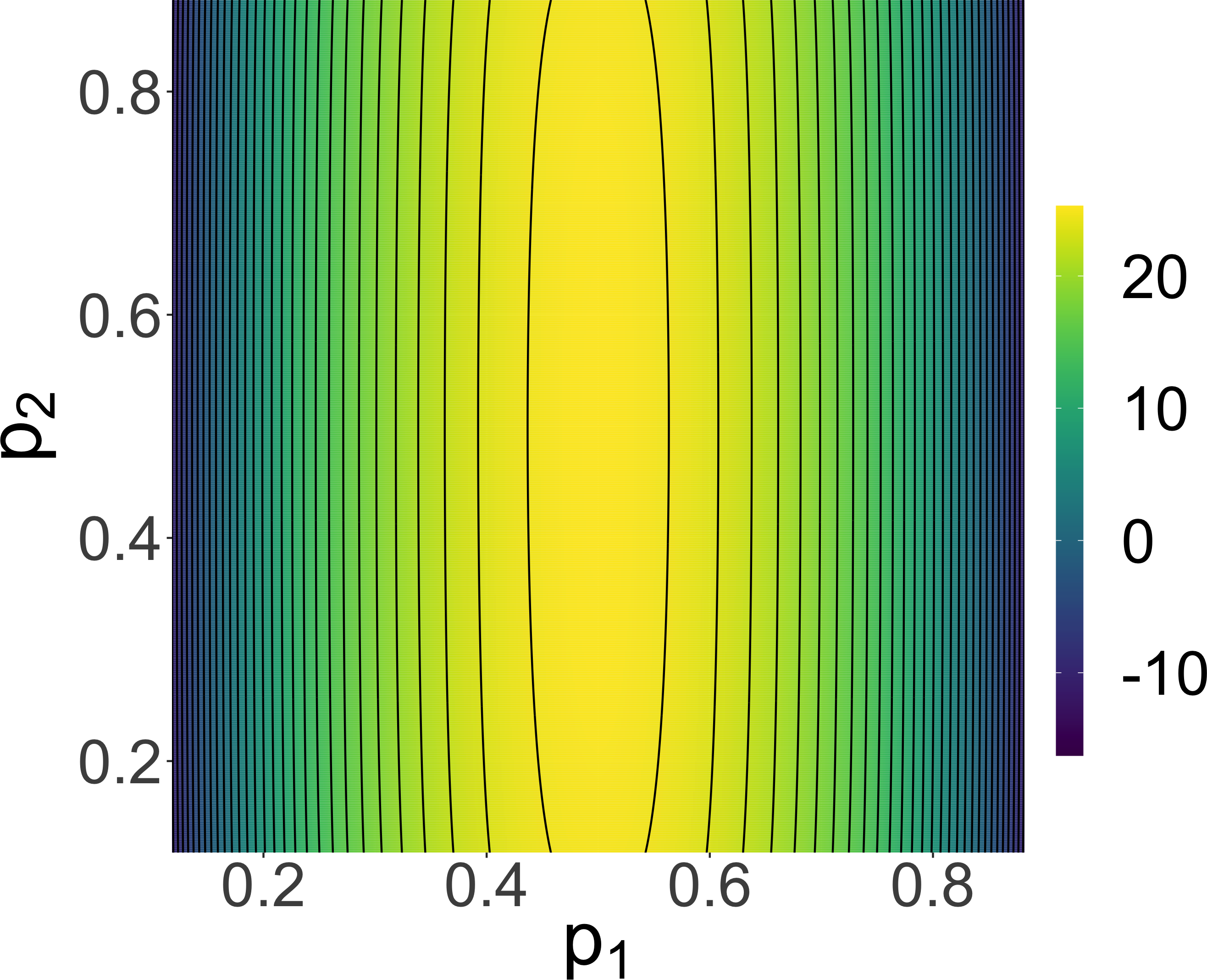}
			\end{center}
		\end{minipage}
	\end{center}
	\vspace{-0.4cm}
	\caption{Average expected CRPS, log-score, SCRPS, \changed{and Hyvärinen score} for two mean-zero normal distributions with $\sigma_1 = 0.1$ and $\sigma_2 = 1$. The top row shows the values as functions of the relative errors in the standard deviations of the predictive distributions when the predictive model has the correct mean value. The bottom row shows the values as functions of probabilities $p_1$ and $p_2$, when the predictive model has correct variances but mean values $\hat{\mu}_i = \sigma_i\Phi^{-1}(p_i)$. \changed{The color scale for the Hyvärinen score has been truncated from below.}}
	\label{fig:bivar}
\end{figure}

\begin{example}\label{ex1}
	Consider two observations $Y_i \sim \Q_{\theta_i}= \pN(0,\sigma_i^2), i=1,2,$ with $\sigma_1 = 0.1$ and $\sigma_2 = 1$. Assume that we want to evaluate a model which has predictive distributions  $\P_{i} = \pN(\hat{\mu}_i,\hat{\sigma}_i^2)$ for $Y_i$, using the average of a proper scoring rule $\frac12 S(\P_{1},Y_1) +\frac12 S(\P_{2},Y_2)$. Using the expressions in Appendix \ref{app:gauss}, we compute the expected average score when the CRPS, \changed{the log-score, the SCRPS, and Hyvärinen score} are used, and investigate how the average scores depend on the model parameters. The top row of Figure~\ref{fig:bivar} shows the results when the two standard deviations, $\hat{\sigma}_i$, are varied while keeping $\hat{\mu}_i=0$ fixed. In the bottom row, the average scores are instead shown as functions of $\hat{\mu}_i$, while $\hat{\sigma}_i = \sigma_i$. To simplify interpretation, the size of $\hat{\mu}_i$ is shown as a quantile of the true distribution, i.e., $\hat{\mu}_i = \sigma_i\Phi^{-1}(p_i)$ where $p_i$ is a probability and $\Phi$ is the CDF of the standard Gaussian distribution. In both rows, one can note that the average CRPS is much more sensitive to relative errors in the second variable, which has the higher variance. Thus, if we would compare  two competing models for this example using CRPS, the model which has the better prediction for the second variable will likely win, even if it is much worse for the first variable.  As seen in the figure, this is not the case for the log-score or for the SCRPS, \changed{whereas the Hyvärinen score is more sensitive to relative errors in the first variable.}
\end{example}
The example shows that, in the case of normal distributions, the CRPS, \changed{the Hyvärinen score}, and the log-score handle varying uncertainty (scaling) differently. For the log-score, the average score is symmetric in the two observations, whereas the \changed{Hyvärinen score punishes errors in the variable with smaller uncertainty more, and the} CRPS punishes errors in the variable with larger uncertainty more.
This has important implications for the case when the average scores are used to evaluate statistical models with dependence. For example, if different random field models for irregularly spaced observations are evaluated using leave-one-out cross validation, the predictive distributions for locations that are far away from other locations will have larger variances and thus be more important when computing the average CRPS. Hence, it will be most important to have small errors for observations without any close neighbors, rather than giving accurate predictions for locations where there is much data to base the prediction on.  \changed{We formalize these observations in the following subsection and use them to give a definition of local scale invariance.}

\subsection{Definition and properties}
\changed{In the introduction we gave an informal definition of a scoring rule, and we begin this subsection by formalizing this notion and then define the concept of local scale invariance. Let $\overline{\mathbb{R}} = [-\infty,\infty]$ denote the extended real line and $\Omega$ some sample space. Further, let $\mathcal{P}$ be a set of probability measures on $(\Omega, \mathcal{F})$, where $\mathcal{F}$ denotes a $\sigma$-algebra of subsets of $\Omega$. A scoring rule $S : \mathcal{P} \times \Omega \rightarrow \overline{\mathbb{R}}$ is a function of a probability measure $\P\in \mathcal{P}$  representing the forecast and an observed outcome $y\in\Omega$ such that $S(\P,\cdot)$ is measurable for each $\P\in\mathcal{P}$ \citep[see, e.g.,][]{gneiting2007probabilistic}.} 
To analyze the issue of scale dependence, we consider the scenario that the set of optimal predictive distributions in \eqref{eq:average_score} are location-scale transformations of some common base measure $\Q$ on $(\mathbb{R},\mathcal{B}(\mathbb{R}))$, where $\mathcal{B}(\mathbb{R})$ is the Borel $\sigma$-algebra. 
That is $Y_i\sim \Q_{\theta_i}$ where $\theta_i=(\mu_i,\sigma_i)$ contains the location and scale parameters of the observations and $\Q_{\theta_i}$ is the probability measure of the random variable $\mu_i  + \sigma_i Z$ where $Z \sim \Q$.
\changed{We are, e.g., in this scenario whenever we are performing prediction for Gaussian processes and random fields, which is a common scenario in spatio-temporal statistics.}
To investigate scale dependence it is enough to study how the scale affects the score for one of the observations, so we therefore drop the indexing by $i$ and consider a generic location-scale transformation $\Q_{\theta}$ of $\Q$.
If $\Q$ has a density \changed{$q(\cdot)$} with respect to the Lebesgue measure, then $\Q_\theta$ has density \changed{$q((\cdot-\mu)/\sigma)/\sigma$}.


\begin{Definition}\label{def:measuresF}
	Let $S$ be a scoring rule on $\Omega$ then $\mathcal{P}_{S}$ is the set of probability measures on $(\Omega, \mathcal{F})$ such that if $\mathbb{P} \in \mathcal{P}_S$ and $\mathbb{Q}\in \mathcal{P}_S$ then $|S(\mathbb{P},\mathbb{Q}) | < \infty$.
	\changed{For a set $\mathcal{Q}_0$ of probability measures, we write  $\mathcal{Q} = \{\Q_{\theta} : \Q \in \mathcal{Q}_0, \theta \in \mathbb{R}\times\mathbb{R}^+  \} $ for the set of probability measures in $\mathcal{P}_S$  which can be obtained as location-scale transforms of measures in $\mathcal{Q}_0$.}
\end{Definition}

At first sight, a natural way to remedy issues with varying scale is to require that the scoring rule is scale invariant in the sense that $S(\P_{\mu,\sigma},\Q_{\mu,\sigma})=S(\P_{0,1},\Q_{0,1})$. However, for a scoring rule to have this property  it needs to disregard the sharpness of the forecast \citep{gneiting2007probabilistic}, and would therefore not be ideal for ranking predictive distributions. 

As an alternative, we propose the concept of local scale invariance.
\changed{In order to define this property we examine the geometry defined by the scoring rule. If $S(\Q_{\theta},\Q_{\theta'})$ is twice differentiable with respect to $\theta$ then 
	$S(\Q_{\theta+ d\theta},\Q_{\theta})$ defines a Riemannian Metric on $\mathcal{Q}$ with
	$$
	S(\Q_{\theta+ d\theta},\Q_{\theta}) = d\theta^T s(\Q_{\theta}) d\theta,
	$$
	see, e.g., \cite{dawid2014theory}. We will refer to $s(\Q_{\theta})$ as the scale function of the scoring rule $\mathcal{Q}$ and introduce the following definition of local scale invariance.
	\begin{Definition}
		\label{def:scalefunction}
		Let $S$ be a proper scoring rule with respect to some class of probability measures $\mathcal{P}$ on $(\R,\mathcal{B}(\R))$, and assume that $\mathcal{Q}_0$ is a set of probability measures such that $\mathcal{Q}\subseteq \mathcal{P}$ and $\mathcal{Q} \subset \mathcal{P}_S$. If 
		$s(\Q_{\theta})$ exists and satisfies $s(\Q_{\theta})\equiv \frac{1}{\sigma^2}s(\Q_{(0,1)})$, we say that $S$ is locally scale invariant on $\mathcal{Q}$.
	\end{Definition}	
	Consider a situation where one makes a small location-scale misspecification proportional to the scale of the true distribution. Definition~\ref{def:scalefunction} then implies that, for a locally scale invariant scoring rule, the difference in the score between the correct and misspecified models should not depend on the scale.  To see this,  let the predictive distribution be formulated as $\Q_{\theta + t\sigma \delta}$, where $\theta + t\sigma \delta = (\mu + t\sigma\delta_1, \sigma + t\sigma\delta_2)$ for $t>0$ and a direction $\delta = (\delta_1,\delta_2)$ with $\|\delta\|=1$. Considering how the scoring rule behaves as a function of the shift size, $t$, we have as $t \searrow 0$,
	$$
	S(\Q_{\theta},\Q_{\theta})-S(\Q_{\theta + t\sigma \delta},\Q_{\theta})= t^2\sigma^2 \delta^T s(\Q_{\theta})\delta +o(t^2).
	$$
	Thus, if $s(\Q_{\theta})$ is locally scale invariant, the misspecification error is independent of the scale. We will discuss this point further in Subsection \ref{sec:ForecastEval}.
	\begin{Remark}
		It can be noted that the definition of local scale invariance could be extended to models which are not location-scale families, by interpreting $\sigma$ as the standard deviation of $\Q_{\theta}$. However, in that case, where $\theta$ is a general parameter, using the shift $\theta + t\sigma \delta$ is less natural. There are various approaches that could be taken to overcome this issue, but for brevity we leave this generality for future work and focus on  location-scale families here. 
	\end{Remark}
}

In order to ensure the existence of the scale function, we need some assumptions on the probability measure $\Q_\theta$ in Definition \ref{def:scalefunction}.
The following assumption (where the specific value of $\alpha$ varies with the scoring rule) is sufficient for the scoring rules considered in this article. 
\begin{Assumption}\label{ass1} 
	The probability measure $\Q$ on $(\mathbb{R},\mathcal{B}(\mathbb{R}))$ has density \changed{$\exp(\Psi)$} with respect to the Lebesgue measure for some twice differentiable function \changed{$\Psi$} such that
	$\E_{\Q}[\Psi'(X)]$, $\E_{\Q}[\Psi''(X)]$, and $\E_{\Q}[\left( \Psi'(X) \right)^2]$ are finite. \changed{Further, there exists $\alpha \geq 0$ such that}  $\E_{\Q}[ |X|^{\alpha} ]$, $\E_{\Q}[ |X|^{\alpha+1} \Psi'(X)]$, $\E_{\Q}[|X|^{\alpha+3}\Psi''(X) ]$, and $\E_{\Q}[|X|^{\alpha+3} \left( \Psi'(X) \right)^2]$ are finite.
\end{Assumption}
These assumptions are satisfied for all examples with continuous distributions we will study later on.
However, it should be noted that the assumptions are far from necessary for the scoring rules considered and the corresponding scale functions thus exist for a much larger class of measures. Since our objective is  to highlight the meaning of the scale function rather than finding the largest class of measures for which it exists,  we will prove the results under these assumptions to simplify the exposition. The following result shows that the log-score is locally scale invariant. It is proved in Appendix~\ref{sec:proofs}.

\begin{Proposition}\label{prop:scale_ls}
	Let $\mathcal{Q}$ denote a set of location-scale transformations $\Q_\theta$ that satisfy Assumption \ref{ass1} with $\alpha=0$. Then the log-score has scale function 	$s(\Q_{\theta})= \frac{1}{\sigma^2}H_{\Q}$ on $\mathcal{Q}$, where $H_{\Q}$ is a $2\times 2$ matrix independent of $\theta$.
\end{Proposition}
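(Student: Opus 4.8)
The plan is to exploit the location-scale structure to collapse the difference $S(\Q_\theta,\Q_\theta)-S(\Q_{\theta+t\sigma r},\Q_\theta)$ into an expression that manifestly does not involve $\theta=(\mu,\sigma)$, and only then to Taylor expand in $t$. Writing $Y\sim\Q_\theta$ as $Y=\mu+\sigma Z$ with $Z\sim\Q$, and writing the predictive density at the shifted parameter $\theta+t\sigma r=(\mu+t\sigma r_1,\ \sigma(1+tr_2))$ as $\tfrac{1}{\sigma(1+tr_2)}\exp\Psi\!\left(\tfrac{y-\mu-t\sigma r_1}{\sigma(1+tr_2)}\right)$, the Jacobian term contributes $-\log\sigma-\log(1+tr_2)$ while the affine factors $\mu$ and $\sigma$ cancel inside $\Psi$, leaving $\log q_{\theta+t\sigma r}(Y)=-\log\sigma-\log(1+tr_2)+\Psi\!\left(\tfrac{Z-tr_1}{1+tr_2}\right)$. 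Subtracting this from $\log q_\theta(Y)=-\log\sigma+\Psi(Z)$ and taking the expectation over $Z\sim\Q$ gives
\[
S(\Q_\theta,\Q_\theta)-S(\Q_{\theta+t\sigma r},\Q_\theta)=\log(1+tr_2)+\E_{\Q}\!\left[\Psi(Z)-\Psi\!\left(\tfrac{Z-tr_1}{1+tr_2}\right)\right]=:g(t),
\]
which is exactly the Kullback--Leibler divergence $D_{\mathrm{KL}}(\Q_\theta\,\|\,\Q_{\theta+t\sigma r})$ and depends only on $\Q$ and $r$. This reduction is the crux: once it is in place the scale function is automatically independent of $\theta$, so local scale invariance follows as soon as a $t^2$-expansion of $g$ is established.

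Next I would expand $g$ to second order, using that $g(0)=0$ and the two integration-by-parts identities $\E_{\Q}[\Psi'(Z)]=\int q'(z)\,dz=0$ and $\E_{\Q}[Z\,\Psi'(Z)]=\int z\,q'(z)\,dz=-1$, valid since the density is $\exp(\Psi)$ and the boundary terms vanish. Differentiating under the integral with $u(t)=\tfrac{Z-tr_1}{1+tr_2}$, so that $u'(0)=-(r_1+r_2Z)$ and $u''(0)=2r_1r_2+2r_2^2Z$, one obtains $g'(0)=r_2+r_1\E_{\Q}[\Psi'(Z)]+r_2\E_{\Q}[Z\Psi'(Z)]=r_2+0-r_2=0$, which directly recovers $p=2$ in accordance with the remark that $S$ is maximised at $\theta$. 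The second derivative, after applying the same two identities to eliminate the $\Psi'$-terms, reduces to
\[
s(\Q_\theta,r)=\tfrac12 g''(0)=\tfrac12\Big(r_2^2-r_1^2\,\E_{\Q}[\Psi''(Z)]-2r_1r_2\,\E_{\Q}[Z\Psi''(Z)]-r_2^2\,\E_{\Q}[Z^2\Psi''(Z)]\Big),
\]
which is the quadratic form $r^{T}H_{\Q}r$ with $H_{\Q}=\tfrac12\left(\begin{smallmatrix}-\E_{\Q}[\Psi''(Z)] & -\E_{\Q}[Z\Psi''(Z)]\\[2pt] -\E_{\Q}[Z\Psi''(Z)] & 1-\E_{\Q}[Z^2\Psi''(Z)]\end{smallmatrix}\right)$. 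Every entry is an expectation under the fixed base measure $\Q$, hence $H_{\Q}$ is independent of $\theta$ (and is, up to the factor $\tfrac12$, the Fisher information of the location-scale family, consistent with the KL interpretation above), which completes the argument.

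The substantive work is not this algebra but the analytic justification of the expansion: interchanging $\tfrac{d}{dt}$ with $\E_{\Q}$ and controlling the Taylor remainder of $g$ uniformly in a neighbourhood of $t=0$. I would handle this via Taylor's theorem with integral remainder, dominating the integrand by the quantities appearing in Assumption~\ref{ass1}. With $\alpha=0$, finiteness of $\E_{\Q}[|X|^{3}\Psi''(X)]$ and $\E_{\Q}[|X|^{3}(\Psi'(X))^2]$ is precisely what bounds the terms $\E_{\Q}[Z^2\Psi''(Z)]$ and the remainder generated by the $(1+tr_2)^{-1}$ factors, while finiteness of $\E_{\Q}[\Psi'(X)]$, $\E_{\Q}[\Psi''(X)]$ and $\E_{\Q}[(\Psi'(X))^2]$ controls the lower-order contributions. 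One must also verify that the boundary terms in the two integration-by-parts identities vanish, i.e. $q(z)\to0$ and $z\,q(z)\to0$ as $|z|\to\infty$; this follows from integrability of $q$ together with the finite moments implied by the assumption. I therefore expect the differentiation-under-the-integral and remainder-control step to be the only delicate point, with everything else reducing to the two elementary identities and a short Taylor computation.
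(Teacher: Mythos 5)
Your proof is correct and rests on the same underlying idea as the paper's: a second-order Taylor expansion in $t$ whose first-order term vanishes because the log-score is maximised at the true parameter, with the scale function emerging as a quadratic form in a Fisher-information-type Hessian. The difference is in execution. The paper expands $S(\Q_{\theta+t\sigma r},\Q_\theta)$ directly in $\theta$ with increment $t\sigma r$, picking up a factor $t^2\sigma^2$, and then cites classical likelihood theory for the fact that $\nabla^2_\theta\, \E_{\Q_\theta}[\log q_\theta(X)]$ evaluated at the truth equals $\sigma^{-2}H_{\Q}$ for a $\theta$-free matrix, so the two powers of $\sigma$ cancel. You instead substitute $Y=\mu+\sigma Z$ first, which collapses the score difference into $\log(1+tr_2)+\E_{\Q}\bigl[\Psi(Z)-\Psi\bigl((Z-tr_1)/(1+tr_2)\bigr)\bigr]$, a Kullback--Leibler divergence depending only on $\Q$ and $r$; the $\theta$-independence of the scale function is then automatic rather than the result of a cancellation, and the expansion yields $H_{\Q}$ explicitly (half the Fisher information of the base location-scale family), with nonnegativity of $s$ coming for free from the KL interpretation. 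Your flagged regularity issues --- differentiation under the integral and the vanishing boundary terms behind $\E_{\Q}[\Psi'(Z)]=0$ and $\E_{\Q}[Z\Psi'(Z)]=-1$ --- are precisely the conditions the paper absorbs into Assumption~\ref{ass1} and the likelihood-theory citation, so your treatment is at least as careful. What the paper's version buys is brevity and a reusable template: the same abstract Taylor step is applied verbatim in the proofs of the kernel-score scale functions, where no closed-form reduction to the base measure is as clean.
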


\changed{
	We will later show that the SCRPS also is locally scale invariant, whereas the CRPS has a scale function that satisfies $s(\Q_{\theta}) =\sigma  s(\Q)$.  This difference in the scale function thus captures the behaviour seen in Figure \ref{fig:bivar}, in the sense that the CRPS is sensitive to the scale of the observations whereas the SCPRS and the log-score are not. It is also easy to see that the scale functions for the proper scoring rules defining MSE and MAE satisfy $s_{MSE}(\Q_{\theta}) =\sigma^2  s_{MSE}(\Q)$ and $s_{MAE}(\Q_{\theta}) =\sigma  s_{MAE}(\Q)$ respectively. Hence, neither is locally scale invariant.}

	\subsection{Local scale invariance and discriminatory power of composite scores}
	\label{sec:ForecastEval}
	When using a scoring rule to rank forecasts (or models) it is important to understand how it discriminates between models. The discriminatory power of a scoring rule is large at $\theta$ in the direction $\delta$ when, for some small $t>0$,
	\begin{equation}\label{eq:discrimination}
		S(\mathbb{Q}_\theta,\mathbb{Q}_\theta) - S(\mathbb{Q}_{\theta+ t\delta},\mathbb{Q}_\theta) \approx t^2\delta^T s(\mathbb{Q}_{\theta}) \delta
	\end{equation}
	is large. In the case of varying scale, where we have a set of probability measures $\{\Q_{\theta_i}\}$, the notion of a ``small'' perturbation should be viewed relative to the scale of the distribution for each $i$. If we would not do this,  the location and scale of $\Q_{\theta_i+\delta t}$ and of $\Q_{\theta_i}$ can be orders of magnitude apart for some $i$ even if $t>0$ is small. Thus, to study the discriminatory power of a composite scoring rule, it is natural to consider 
	\begin{equation}\label{eq:discComb}
		\frac{1}{n}\sum_{i=1}^n [S(\mathbb{Q}_{\theta_i},\mathbb{Q}_{\theta_i}) - S(\mathbb{Q}_{\theta_i+\sigma_i \delta t},\mathbb{Q}_{\theta_i})] \approx t^2 \delta^T \left( \frac{1}{n} \sum_{i=1}^n  \sigma_i^2 s  \left(\mathbb{Q}_{\theta_i} \right) \right) \delta.
	\end{equation}
	
	Using a scale invariant scoring rule means that it will have an equal discriminatory power for each observation in \eqref{eq:discComb}, since  $\sigma_i^2 s(\mathbb{Q}_{\theta_i})=  s(\mathbb{Q})$. For the CRPS, one instead has $\sigma_i^2 s(\mathbb{Q}_{\theta_i})= \sigma_i s(\mathbb{Q})$ (see Proposition~\ref{prop:kernel_scale}), hence the score puts more discriminatory power on observations with larger variability (the same holds for the MSE and the MAE). For the Hyvärinen score (if $\Q$ is a Normal distribution) then $\sigma_i^2 s(\mathbb{Q}_{\theta_i})= \frac{1}{\sigma^2_i} s(\mathbb{Q})$, and the score hence puts more discriminatory power on observations where the variability is small. One can thus see that the scale function captures the estimation properties of the scores shown in Figure~\ref{fig:bivar}, where the CRPS gives a large weight to the observation with a large variance and the Hyvärinen score gives a large weight to observation with a small variance.
	
	This observation about discriminatory power goes beyond the location-scale parameter setting. Suppose, for example, that we have an $AR(p)$ processes, $X_t$, and consider one-step-ahead predictions so that the $t$:th prediction has location parameter
	$
	\mu_t = \sum_{i=1}^p \alpha_{i} X_{t-i}.
	$
	If the innovation process has a non-constant variance, or if we do not start the process in the stationary distribution, then the scale parameter of the predictions will vary. Thus, if we use an average score over the different predictions we expect the score ruling to give discriminatory power to observations as described above.
	
	These observations explain the results in the example in Subsection~\ref{sec:motivation}, which shows a situation where one does not want to use a scale dependent scoring rule.
	However, it is also important to note that there are situations where scale dependence can be a good property, if we indeed care more about the discriminatory power for observations with high variability.
	In any case, one should be aware of the choice one is making when choosing a scoring rule for ranking forecasts. For example, the CRPS punishes errors linearly in scale which implies that large errors (in absolute terms) get punished more, while the SCRPS punishes  error in relative terms. This is seen  clearly in Section~\ref{sec:NBR} and Figure~\ref{fig:pedestrian}.

\subsection{The relation to homogeneity}\label{sec:homogeneity}

A property related to local scale invariance is homogeneity, \changed{which should not be confused with the concept of homogeneity in the density function as introduced by \citet{Parry2012}}. 
A scoring rule $S$ 
is said to be homogeneous  of order $b$ if 
$S(\mathbb{P}_{0,\sigma}, \mathbb{Q}_{0,\sigma}) = \sigma^b S(\mathbb{P}, \mathbb{Q})$
for all $\sigma>0$ such that the scale transformations $\mathbb{P}_{0,\sigma}, \mathbb{Q}_{0,\sigma}$ of the probability measures $\mathbb{P}$ and $\mathbb{Q}$ are well-defined. Homogeneous scoring rules have the desirable property that their rankings of predictions are invariant under scaling (or change of unit of the input).  \citet{patton2011volatility} and \citet{efron1991regression} noted the importance of this property for the loss function in regression. It is easy to see that the CRPS is homogeneous of order one. 

When using scoring rules for \changed{ranking of forecasts} only the difference between the scores is relevant. 
\cite{nolde2017elicitability} noted that this allows one to weaken the  homogeneity assumption. To make this precise we introduce  the concept of difference homogeneity. A scoring rule is difference homogeneous of order $b$ \changed{on $\mathcal{Q}$ (see Definition~\ref{def:measuresF}) if there exists a function $h :  \mathbb{R} \times \mathbb{R} \rightarrow \mathbb{R}$, which may depend on $\mathcal{Q}$, such that}
\[
\changed{S(\mathbb{P}_{0,\sigma}, \mathbb{Q}_{0,\sigma}) = \sigma^b S(\mathbb{P}, \mathbb{Q}) + h(\sigma,b) \quad \forall\, \mathbb{P},\mathbb{Q}\in \mathcal{Q}.}
\]

One can relate this property to local scale invariance when keeping the location parameter fixed. Specifically, if the scoring rule has a scale function $s$, then for each $t>0$
\begin{align*}
	S(\mathbb{Q}_{(0,\sigma)},\mathbb{Q}_{(0,\sigma)})-	S(\mathbb{Q}_{(0,\sigma( 1 + t))}, \mathbb{Q}_{0,\sigma})   &= 
	\sigma^b S(\mathbb{Q},\mathbb{Q}) -	\sigma^b S(\mathbb{Q}_{(0, ( 1 + t))}, \mathbb{Q}) \\
	&=	\sigma^b s(\mathbb{Q}) t^2 + o(t^2),  
\end{align*}
\changed{Hence at least for the scale parameter, a difference homogeneous scoring rule of order $b$  has the scale function $s(\mathbb{Q}_{(0,\sigma)}) = \sigma^b s(\mathbb{Q})$.
	From this it is also easy to see that with respect to scaling (not location), a scoring rule that is difference homogeneous of order $b$ is locally scale invariant if and only if $b=0$. }


\section{Robustness of scoring rules}\label{sec:robust}
Besides scale dependence, another problematic scenario is if the scoring rule is sensitive to outliers in the data. In this case, the average score \eqref{eq:average_score} may be heavily affected by only a few predictions. 
Also the sensitivity to outliers can give unintuitive results when using average scores for \changed{rankings of forecasts}, which is illustrated in the following example.

\begin{example}\label{ex2}
	Suppose that we have two competing models which are used for prediction of the two real-valued variables $Y_1$ and $Y_2$. The first model (shown in black in Figure \ref{fig:bivar2}) has $\P_{1} = \pN(0,0.01^2)$ and $\P_{2} = \pN(5,0.8^2)$. The second model (shown in red) has $\P_{1} = \pN(0,0.1^2)$ and $\P_{2} = \pN(4.9,0.85^2)$. Assume that we observe $y_1 = 0$ and $y_2 = 0.5$ and compute the average CRPS, log-score, and SCRPS for each model. The results are shown in Table~\ref{tab:ex2}.
	The second model is chosen by the CRPS and by the log-score, even though the first model is clearly more accurate for the first variable and both models are similarly inaccurate for the second. 
	The SCRPS on the other hand chooses the first model. 
\end{example}

\begin{figure}[t]
	\begin{center}
		\begin{minipage}[t]{0.4\linewidth}
			\begin{center}
				\includegraphics[width=\textwidth]{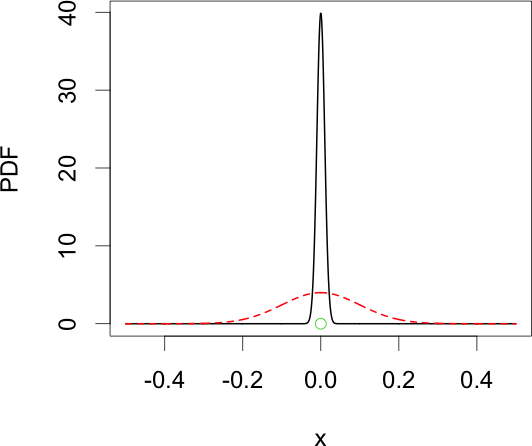}
			\end{center}
		\end{minipage}
		\begin{minipage}[t]{0.4\linewidth}
			\begin{center}
				\includegraphics[width=\textwidth]{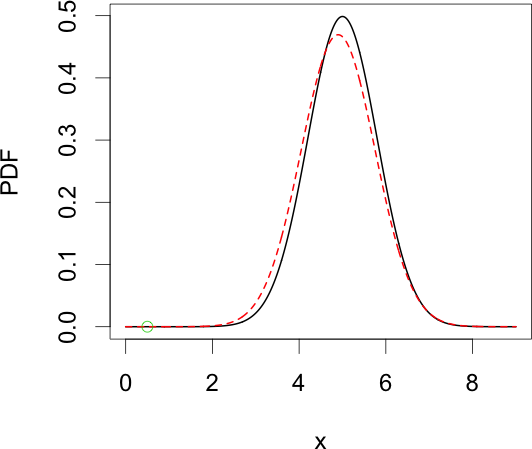}
			\end{center}
		\end{minipage}
		\vspace{-0.4cm}
	\end{center}
	\caption{Predictive distributions of two models, one shown in black and one in red, for two variables $Y_1$ (left) and $Y_2$ (right). Observations of each variable are shown in green.}
	\label{fig:bivar2}
\end{figure}

\begin{table}
	\caption{Results for Example~\ref{ex2}.}
	\centering
	\begin{tabular}{lcccccc}
		\toprule
		& \multicolumn{3}{c}{Model 1} & \multicolumn{3}{c}{Model 2}\\
		& CRPS & log-score & SCRPS & CRPS & log-score & SCRPS\\
		\cmidrule(r){2-4} \cmidrule(r){5-7}
		$Y_1$  	& -0.002	&  3.69 	&   1.53	& -0.02 & 1.38 & 0.38\\
		$Y_2$ 	& -4.05 	& -16.5	& -4.93	& -3.92 & -14.15 & -4.57\\
		mean 	& -2.02 	& -6.42 	& -1.70	& -1.97 & -6.38 & -2.09\\
		\bottomrule
	\end{tabular}
	\label{tab:ex2}
\end{table}

The apparent higher sensitivity to outliers of the log-score compared to CRPS can result in \changed{rankings} where models with larger variances are favored, even though this might not be optimal for most  locations. This higher robustness of the CRPS has previously been noted \citep{gneiting2007strictly}, but a natural question is if this is true also for other distributions than the normal. To assess this, we will study the behavior of $S(\P,y)$ as $|y|\rightarrow\infty$. The asymptotic rate of $S$ will in this case be a measure of the robustness of the scoring rule, and to get a scoring rule which is actually robust, we require that $S(\P,y)$ remains bounded as $y$ increases. Formally, we use the following definition of robustness of scoring rules. Here, we let $f(x) \asymp g(x)$ denote that there exist constants $0<c<C<\infty$ and $x_0>0$ such that $cg(x)\leq f(x) \leq Cg(x)$ for all $\|x\|>x_0$. 
\begin{Definition}\label{def:robust}
	A scoring rule $S$ on a normed space $(\Omega, \|\cdot\|_\Omega)$ is robust if $S(\P,y)$ is bounded as a function of $y$ for each $\P \in \mathcal{P}_S$. \changed{If there exists a number $\alpha_\P\geq 0$ such that $|S(\P,y)| \asymp  \|y\|_\Omega^{\alpha_\P}$ for a given $\P \in \mathcal{P}_S$, then we say that this scoring rule has model-sensitivity $\alpha_\P$. It if exists, the sensitivity index of $S$ is defined as $\alpha = \sup_{\P\in\mathcal{P}} \alpha_\P$. Thus, if $S$ has a sensitivity index $\alpha$, it is robust if $\alpha=0$.}
\end{Definition}

For $\Omega = \mathbb{R}$ and a normal distribution $\P = \pN(\mu,\sigma^2)$, the log-score has model-sensitivity $\alpha_\P=2$. Hence, the log-score is not robust. Using the expression of the CRPS for the normal distribution from Appendix \ref{app:gauss}, we get that the model-sensitivity in this case is $\alpha_\P=1$. Thus, the CRPS has a lower sensitivity than the log-score in the Gaussian case, but it is not robust. In fact, it is not always the case that the model-sensitivity is lower for the CRPS compared to the log-score. A simple counter example is to take $\P$ as the Laplace distribution, with $|\logs(\P,y)| \asymp y$ and $|\crps(\P,y)|\asymp y$.

\changed{
	\begin{Remark}
		The boundedness of $S(\P,y)$ might not be strictly necessary for applications, and one could alternatively consider a relaxed notion of robustness by assuming that the scoring rule is robust if  $|S(\P,y)| \asymp  \log( \|y\|_\Omega)$. Also note that $S$ might be bounded for all $\P\in\mathcal{P}_S$ even if there does not exists an $\alpha_\P$ for some $\P$. Thus, the existence of $\alpha$ in Definition~\ref{def:robust} is not necessary for robustness of $S$.
	\end{Remark}
	It is not difficult to see that our notion of robustness is equivalent to requiring that the scoring rule has a bounded influence function, which is a classical general definition of robustness \citep{hampel1974influence}. There are naturally other possible definitions of robustness that could be considered. In particular, proper scoring rules are often used for parameter estimation \citep[see, e.g.,][]{gneiting2007strictly,Dawid2016}, and for this scenario one could alternatively use the classical notion of B-robustness for M-estimators, which was discussed in the context of parameter estimation via scoring rules by \citet{Dawid2016}. However, since our focus is not on the use of scoring rules for parameter estimation, we will leave comparisons of our robustness notion with B-robustness for future work.}

\section{Kernel scores and robustness}\label{sec:kernel_scores}

The CRPS is a special case of the larger class of kernel scores, coined by \cite{dawid2007geometry}, which are created using a negative definite kernel. A real-valued function $g$ on $\Omega \times \Omega$, where $\Omega$ is a non-empty set, is said to be a negative definite kernel if it is symmetric in its arguments and if $\sum_{i=1}^n\sum_{j=1}^n a_i a_j g(x_i,x_j) \leq 0$ for all positive integers $n$, all $a_1,\ldots,a_n \in \mathbb{R}$ such that $\sum_{i=1}^n a_i=0$, and all $x_1,\ldots,x_n \in \Omega$. 
Given a negative definite kernel, the kernel score is created as in the following theorem by \cite{gneiting2007strictly}. 

\begin{Theorem}\label{thm:kernel}
	Let $\P$ be a Borel probability measure on a Hausdorff space $\Omega$. Assume that $g$ is a non-negative, continuous negative definite kernel on $\Omega \times \Omega$ and let $\mathcal{P}$ denote the class of Borel probability measures on $\Omega$ such that  $\E_{\P,\P}\left[ g(X,Y) \right] < \infty$. Then 
	\begin{equation}\label{eq:kernel_score}
		\Sker_g(\P, y) := \frac{1}{2} \E_{\P,\P} \left[g(X,Y) \right] - \E_{\P} \left[g(X,y) \right]
	\end{equation}
	is a proper scoring rule on $\mathcal{P}$. 
\end{Theorem}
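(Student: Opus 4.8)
The plan is to reduce properness to a single integral inequality in the signed measure $\P-\Q$ and then to derive that inequality from the discrete definition of negative definiteness. Because $g$ is non-negative, every expectation below is well defined with values in $[0,\infty]$. Averaging $\Sker_g(\P,y)$ over $y\sim\Q$ and using that $\E_{\P,\P}[g(X,Y)]$ does not depend on $y$ gives
\[
\Sker_g(\P,\Q)=\tfrac12\E_{\P,\P}[g(X,Y)]-\E_{\P,\Q}[g(X,Y)],
\]
where $X\sim\P$ and $Y\sim\Q$ are independent on the right, and in particular $\Sker_g(\Q,\Q)=-\tfrac12\E_{\Q,\Q}[g(X,Y)]$. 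If the cross term $\E_{\P,\Q}[g]$ is infinite then $\Sker_g(\P,\Q)=-\infty\le\Sker_g(\Q,\Q)$ and properness is immediate, so I would assume it is finite. In that case subtracting the two displays and grouping terms yields
\[
\Sker_g(\Q,\Q)-\Sker_g(\P,\Q)=-\tfrac12\big(\E_{\P,\P}[g]-2\E_{\P,\Q}[g]+\E_{\Q,\Q}[g]\big)=-\tfrac12\iint g(x,y)\,\md(\P-\Q)(x)\,\md(\P-\Q)(y).
\]
Thus $\Sker_g$ is proper precisely when the signed measure $\mu=\P-\Q$, which has total mass zero, satisfies $\iint g\,\md\mu\,\md\mu\le 0$; that is, the pointwise negative definiteness of $g$ must be promoted to the level of probability measures.

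To obtain this, I would discretise. Draw $X_1,\dots,X_n$ i.i.d.\ from $\P$ and $Y_1,\dots,Y_n$ i.i.d.\ from $\Q$, all independent, and apply the definition of a negative definite kernel to these $2n$ points with coefficients $1/n$ attached to each $X_i$ and $-1/n$ to each $Y_j$. These coefficients sum to zero, so for every realisation
\[
\frac1{n^2}\sum_{i,j}g(X_i,X_j)-\frac2{n^2}\sum_{i,j}g(X_i,Y_j)+\frac1{n^2}\sum_{i,j}g(Y_i,Y_j)\le 0.
\]
Taking expectations and separating the $n$ diagonal terms from the $n(n-1)$ off-diagonal terms in each double sum, the off-diagonal parts have expectation $\tfrac{n(n-1)}{n^2}$ times $\E_{\P,\P}[g]$, $\E_{\P,\Q}[g]$, and $\E_{\Q,\Q}[g]$ respectively, while the diagonal parts equal $\tfrac1n\E_{\P}[g(X,X)]$ and $\tfrac1n\E_{\Q}[g(Y,Y)]$. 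Letting $n\to\infty$ therefore gives exactly $\E_{\P,\P}[g]-2\E_{\P,\Q}[g]+\E_{\Q,\Q}[g]\le 0$, which is the inequality required in the previous paragraph.

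The step that needs care, and which I expect to be the main obstacle, is justifying that the diagonal contributions vanish in the limit, i.e.\ that $\E_{\P}[g(X,X)]$ and $\E_{\Q}[g(Y,Y)]$ are finite. This follows from the $n=2$ instance of negative definiteness, $g(x,x)+g(y,y)\le 2g(x,y)$: integrating it against $\P\otimes\P$ gives $\E_{\P}[g(X,X)]\le\E_{\P,\P}[g(X,Y)]<\infty$, and likewise for $\Q$. With this in hand the passage to the limit is a matter of linearity of expectation and the elementary count of diagonal versus off-diagonal terms rather than any law of large numbers, so the argument is exact. Finally, since the statement asserts only properness and not strict properness, no analysis of the equality case $\P=\Q$ is needed.
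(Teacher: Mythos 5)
Your argument is correct, and it is genuinely more self-contained than what the paper offers: the paper states Theorem~\ref{thm:kernel} without proof, attributing it to Gneiting and Raftery, and the only place the underlying mechanism surfaces is in the proof of Theorem~\ref{thm:convexkernel}, where the key inequality $\E_{\P,\Q}[g(X,Y)]\geq\tfrac12\bigl(\E_{\P,\P}[g(X,Y)]+\E_{\Q,\Q}[g(X,Y)]\bigr)$ is simply cited from the harmonic-analysis literature. Your identity $\Sker_g(\Q,\Q)-\Sker_g(\P,\Q)=-\tfrac12\bigl(\E_{\P,\P}[g]-2\E_{\P,\Q}[g]+\E_{\Q,\Q}[g]\bigr)$ performs exactly the same reduction of properness to that inequality, but you then derive the inequality directly from the discrete definition of negative definiteness via an empirical discretization, with the diagonal contributions controlled by the $n=2$ instance $g(x,x)+g(y,y)\le 2g(x,y)$, which yields $\E_{\P}[g(X,X)]\le\E_{\P,\P}[g(X,Y)]<\infty$. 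That diagonal lemma is precisely the right observation: it makes the passage to the limit an exact counting argument rather than an appeal to any law of large numbers, and your separate treatment of the case $\E_{\P,\Q}[g]=\infty$ is also needed and handled correctly. What your route buys is a proof using nothing beyond the stated hypotheses; what the citation buys the paper is brevity and a reference that deals more carefully with measure-theoretic technicalities (e.g., joint measurability of a continuous kernel on a general Hausdorff space, which both you and the paper pass over). One cosmetic slip: in the cross term $\sum_{i,j}g(X_i,Y_j)$ every one of the $n^2$ pairs is independent, so its expectation is exactly $2\E_{\P,\Q}[g]$ rather than the fraction $n(n-1)/n^2$ of it that your phrasing suggests; this changes nothing in the limit.
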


One example of a family of negative definite kernels that can be used for $\Omega = \mathbb{R}$ is $g_{\alpha}(x,y) = |x-y|^{\alpha}$ for $\alpha\in (0,2]$, and we introduce the shorthand notation $\Sker_{\alpha}(\P,y)$ for this choice. The CRPS is the special case $\Sker_1(\P,y)$.

We are now interested in whether the kernel scores are locally scale invariant or robust. The robustness naturally depends on the properties of the kernel $g$.  Specifically, we have the following theorem.

\begin{Theorem}
	\label{the:kernelrob}
	Let $\P$ be a Borel probability measure on a normed vector space $(\Omega, \|\cdot\|_\Omega)$. Assume that $g$ is a non-negative, continuous negative definite kernel on $\Omega \times \Omega$, such that  $g(x,y)=g_0(\|x-y\|_\Omega)$, with $g_0(x) \asymp |x|^\alpha$ for some $\alpha>0$, and $|\E_{\P}[g_0(\|X\|_\Omega)]| < \infty$. Then $|S^{ker}_g(\P,y)| \asymp \|y\|_\Omega^\alpha$.
\end{Theorem}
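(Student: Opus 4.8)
The plan is to isolate the $y$-dependence of the score. In \eqref{eq:kernel_score} the term $\frac12\E_{\P,\P}[g(X,Y)]$ is constant in $y$, so the whole dependence on $y$ lives in $T(y):=\E_{\P}[g_0(\|X-y\|_\Omega)]$, and the theorem reduces to showing (a) that the constant term is finite and (b) that $T(y)\asymp\|y\|_\Omega^\alpha$. Before doing either I would record that $g_0(x)\asymp|x|^\alpha$ supplies constants $0<c<C<\infty$ and $x_0>0$ with $c\,x^\alpha\le g_0(x)\le C\,x^\alpha$ for $x>x_0$, that $g_0$ is bounded on $[0,x_0]$ by continuity, and that there is a constant $K_\alpha$ with $(a+b)^\alpha\le K_\alpha(a^\alpha+b^\alpha)$ for $a,b\ge0$ (needed only when $\alpha>1$). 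A first consequence is that $\E_{\P}[g_0(\|X\|_\Omega)]<\infty$ forces $\E_{\P}[\|X\|_\Omega^\alpha]<\infty$, since $g_0(\|X\|_\Omega)\ge c\|X\|_\Omega^\alpha$ off the bounded set $\{\|X\|_\Omega\le x_0\}$. The same bound applied to $\|X-Y\|_\Omega\le\|X\|_\Omega+\|Y\|_\Omega$ then yields $\E_{\P,\P}[g(X,Y)]<\infty$, settling (a).

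For the upper bound in (b) I would split $T(y)$ according to whether $\|X-y\|_\Omega$ exceeds $x_0$. On $\{\|X-y\|_\Omega\le x_0\}$ the integrand is bounded, contributing at most a constant. On the complement, the triangle inequality $\|X-y\|_\Omega\le\|y\|_\Omega+\|X\|_\Omega$ together with $g_0(x)\le Cx^\alpha$ and the $K_\alpha$ inequality gives $g_0(\|X-y\|_\Omega)\le CK_\alpha(\|y\|_\Omega^\alpha+\|X\|_\Omega^\alpha)$; taking expectations and using $\E_{\P}[\|X\|_\Omega^\alpha]<\infty$ bounds $T(y)$ by a constant plus $CK_\alpha\|y\|_\Omega^\alpha$, which is $\le C'\|y\|_\Omega^\alpha$ for $\|y\|_\Omega$ large.

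For the lower bound I would restrict the expectation of the non-negative integrand to the event $A_y=\{\|X\|_\Omega\le\|y\|_\Omega/2\}$. On $A_y$ the reverse triangle inequality gives $\|X-y\|_\Omega\ge\|y\|_\Omega/2$, which exceeds $x_0$ once $\|y\|_\Omega>2x_0$, so $g_0(\|X-y\|_\Omega)\ge c(\|y\|_\Omega/2)^\alpha$ there; since $\P(A_y)\to1$ as $\|y\|_\Omega\to\infty$ we have $\P(A_y)\ge\frac12$ for $\|y\|_\Omega$ large, and hence $T(y)\ge c\,2^{-\alpha-1}\|y\|_\Omega^\alpha$. Combining the two bounds gives $T(y)\asymp\|y\|_\Omega^\alpha$; in particular $T(y)\to\infty$, so for $\|y\|_\Omega$ large the constant term is negligible and $|\Sker_g(\P,y)|=T(y)-\frac12\E_{\P,\P}[g(X,Y)]$ is itself sandwiched between constant multiples of $\|y\|_\Omega^\alpha$, giving the claim.

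The main obstacle is the integrability bookkeeping rather than the geometry: the pointwise statement $\|X-y\|_\Omega/\|y\|_\Omega\to1$ is immediate, but turning it into two-sided control of the expectation requires the finiteness of $\E_{\P}[\|X\|_\Omega^\alpha]$ for the upper bound and the mass-escaping-to-one argument on $A_y$ for the lower bound, and one must retain the constant $K_\alpha$ to handle the regime $\alpha>1$ where $x\mapsto x^\alpha$ is no longer subadditive.
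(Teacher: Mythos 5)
Your proof is correct, but it takes a genuinely different route from the paper's. The paper never invokes the triangle inequality of the norm or the power-type growth until the last line: instead it uses the negative definiteness of $g$ directly, applying $\sum_{i,j}a_ia_jg(x_i,x_j)\le 0$ to the three points $x,y,0$ with two choices of coefficients ($c_1=c_2=1,c_0=-2$ and $c_0=c_2=1,c_1=-2$) to obtain the two-sided pointwise bounds $g(x,y)\le 2g_0(\|x\|_\Omega)+2g_0(\|y\|_\Omega)+\mathrm{const}$ and $g(x,y)\ge \tfrac12 g_0(\|y\|_\Omega)-g_0(\|x\|_\Omega)+\mathrm{const}$; integrating these against $\P$ immediately gives both the finiteness of $\E_{\P,\P}[g(X,Y)]$ and the sandwich $|S^{\mathrm{ker}}_g(\P,y)|\asymp g_0(\|y\|_\Omega)\asymp\|y\|_\Omega^\alpha$. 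You instead exploit the metric structure: the triangle inequality plus $(a+b)^\alpha\le K_\alpha(a^\alpha+b^\alpha)$ for the upper bound, and a concentration argument on $A_y=\{\|X\|_\Omega\le\|y\|_\Omega/2\}$ with $\P(A_y)\to1$ for the lower bound. Each approach buys something: the paper's kernel inequality is coefficient-free, avoids the case split at $x_0$ and the event $A_y$, and its two-sided bound on $g$ holds for \emph{any} nonnegative continuous negative definite kernel, not only radial power-type ones; your argument, conversely, never uses negative definiteness at all, so it establishes the asymptotic rate for any nonnegative continuous radial $g_0\asymp|x|^\alpha$ regardless of whether the resulting score is proper. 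Your integrability bookkeeping (deducing $\E_{\P}[\|X\|_\Omega^\alpha]<\infty$ from $\E_{\P}[g_0(\|X\|_\Omega)]<\infty$, boundedness of $g_0$ on $[0,x_0]$ by continuity) is all sound, so I see no gap.
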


\begin{proof}
	
	The result follows if we can show that the first term in \eqref{eq:kernel_score} is finite and that the second term decays as $-\|y\|_\Omega^\alpha$ when $y \rightarrow \infty$.
	For $x,y \in \Omega$ we have, by the definition of a negative definite kernel, that
	\begin{equation}\label{eq:kernel_rule}
		\begin{split}
			2 c_1 c_2 g(x,y) \leq& - c_1^2 g(y,y) - c_2^2 g(x,x) - c_0^2 g(0,0) - 2c_2 c_0 g(y,0) - 2c_1c_0g(x,0)
		\end{split}
	\end{equation}
	for any constants $c_0, c_1, c_2$ with $c_1+c_2+c_0=0$. Choose $c_1=c_2=1$ and $c_0=-2$, then 
	\eqref{eq:kernel_rule} can be reformulated as
	$g(x,y) \leq -g_0(0) +  2g_0(\|x\|_\Omega) + 2g_0(\|y\|_\Omega)$. Thus, we have $ |\E_{\P,\P} \left[g(X,Y) \right]| \leq |g_0(0)| +  4|\E_{\P}(g_0(\|X\|_\Omega))| < \infty$ since $\E_{\P}[g_0(\|X\|_\Omega)] < \infty$ and 
	\begin{equation}\label{eq:lowerbound}
		\Sker_g(\P, y) \geq \E_{\P,\P} \left[g(X,Y) \right]  + g_0(0) -  2\E_\P(g_0(\|X\|_\Omega)) - 2g_0(\|y\|_\Omega).
	\end{equation}
	
	If we now instead choose  $c_0=c_2=1$ and $c_1=-2$,   then
	\eqref{eq:kernel_rule} gives the lower bound
	$g(x,y) \geq 1.5g_0(0)  + g_0(\|x\|_\Omega) + 0.5g_0(\|y\|_\Omega)$. Thus
	\begin{equation}\label{eq:upperbound}
		\Sker_g(\P, y) \leq \E_{\P,\P} \left[g(X,Y) \right]  + 1.5g_0(0) -  \E_\P(g_0(\|X\|_\Omega)) - 0.5g_0(\|y\|_\Omega).
	\end{equation}
	Since  $g_0(\|x\|_\Omega) \asymp \|x\|_\Omega^\alpha$ and $\E_{\P}[g_0(\|X\|_\Omega)] < \infty$ the claim follows from \eqref{eq:lowerbound} and \eqref{eq:upperbound}. 
\end{proof}

The theorem shows that $\Sker_{\alpha}$ is not a robust scoring rule. However, we may modify the kernel in order to make it robust. For example:

\begin{Corollary}
	Let $\Omega = \mathbb{R}$ and for $c>0$ define
	\begin{equation}\label{eq:truncated_kernel}
		g_c(x,y) = \begin{cases}
			|x-y| & |x-y|<c,\\
			c & \mbox{otherwise.}
		\end{cases}
	\end{equation}
	Then the \textbf{robust CRPS} (rCRPS) defined as $\Sker_{1,c}(\P, y) := \Sker_{g_c}(\P, y)$ is a proper scoring rule on the class of Borel probability measures on $ \mathbb{R}$.
\end{Corollary}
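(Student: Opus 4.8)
The plan is to verify the hypotheses of Theorem~\ref{thm:kernel} for the kernel $g_c$ and then invoke it directly. Symmetry of $g_c$ is immediate, since it depends on $x,y$ only through $|x-y|$; non-negativity holds because $g_c(x,y)=\min(|x-y|,c)\ge 0$; and continuity needs to be checked only across the threshold $|x-y|=c$, where both branches of the definition equal $c$. The decisive convenience is that $g_c(x,y)\le c$ is \emph{bounded}, so the integrability condition $\E_{\P,\P}[g_c(X,Y)]<\infty$ required by Theorem~\ref{thm:kernel} holds for \emph{every} Borel probability measure on $\mathbb{R}$; this is precisely why the resulting class carries no moment restriction and is all such measures.

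The substantive step is to show that $g_c$ is a negative definite kernel. The key observation is the decomposition
\[
g_c(x,y)=\min(|x-y|,c)=c-(c-|x-y|)_+=c-T(x-y),\qquad T(u):=(c-|u|)_+ .
\]
I would then establish that the tent function $T$ is positive definite by exhibiting it as the autocorrelation of an indicator: with $h=\mathbbm{1}_{[-c/2,c/2]}$ one has $T(u)=\int_{\mathbb{R}}h(v)\,h(v-u)\,dv$, whence, after a routine change of variables, for any points $x_1,\dots,x_n$ and weights $a_1,\dots,a_n$,
\[
\sum_{i,j}a_i a_j\,T(x_i-x_j)=\int_{\mathbb{R}}\Big(\sum_{i}a_i\,h(x_i-\tau)\Big)^{2}d\tau\ \ge 0 .
\]
Combining this with the decomposition, for weights satisfying $\sum_i a_i=0$ the constant term contributes $c\big(\sum_i a_i\big)^2=0$, so that
\[
\sum_{i,j}a_i a_j\,g_c(x_i,x_j)=-\sum_{i,j}a_i a_j\,T(x_i-x_j)\ \le 0,
\]
which is exactly the negative definiteness condition.

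With non-negativity, continuity, symmetry, negative definiteness, and the (automatic) finiteness of $\E_{\P,\P}[g_c(X,Y)]$ all in hand, Theorem~\ref{thm:kernel} yields that $\Sker_{1,c}=\Sker_{g_c}$ is a proper scoring rule on the class of all Borel probability measures on $\mathbb{R}$. The only nontrivial ingredient is the positive definiteness of the tent function $T$, and I expect this autocorrelation representation (equivalently, the fact that $T$ has non-negative Fourier transform $|\widehat{h}|^2$) to be the main point; all remaining verifications are routine.
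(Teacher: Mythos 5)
Your argument is correct and follows essentially the same route as the paper: the paper writes $g_c(x,y)=r_c(x,x)+r_c(y,y)-2r_c(x,y)$ with $r_c(x,y)=0.5(c-|x-y|)^+$ the triangular covariance function, which (since $r_c(x,x)=c/2$) is literally your decomposition $g_c=c-T(x-y)$, and then invokes Theorem~\ref{thm:kernel} together with the bound $\E_{\P,\P}[g_c(X,Y)]\leq c$. The only difference is that you supply a proof of the positive definiteness of the tent function via the autocorrelation representation, whereas the paper simply cites it as a known covariance function; this is a matter of detail, not of approach.
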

\begin{proof}
	By the definition of a negative definite kernel, one has that $g(x,y)$ is negative definite if it can be written as $r(x,x) + r(y,y) - 2r(x,y)$ where $r(x,y)$ is positive definite. Here $g_c(x,y) = r_c(x,x) + r_c(y,y) - 2r_c(x,y)$ where $r_c(x,y) = 0.5(c - |x-y|)^+$ is the  positive definite triangular covariance function. 
	The result therefore follows from Theorem~\ref{thm:kernel} since $\pE_{\P,\P}[g_c(X,X)]\leq c$. 
\end{proof}

The constant $c$ here defines a limit where deviations are not further punished. Analytic expressions for this score in the case of the Gaussian distribution are given in Appendix~\ref{app:gauss}. Naturally, many other robust scoring rules can be constructed by replacing the triangular correlation function with some other compactly supported correlation function. 

The next question is whether the kernel scores, robust or not, are scale invariant. The following proposition shows that  $S^{ker}_{\alpha}$ is scale dependent.

\begin{Proposition}\label{prop:kernel_scale}
	\changed{Let $\Omega = \mathbb{R}$ and assume that $\mathcal{Q}$ is a class of  probability measures $\Q_{\theta}$ which are location-scale transformations satisfying Assumption~\ref{ass1} for $\alpha \in (0,2]$. 
		Then the scale function of $S^{ker}_{\alpha}$ on $\mathcal{Q}$ is
		$$
		s(\Q_{\theta})=\sigma^{\alpha-2}s(\Q,r) = \sigma^{\alpha-2}\E_{\Q,\Q}[H^{\alpha}_{\Q}(X,Y)],
		$$}
	where $H^{\alpha}_{\Q}(X,Y)$ is a $2\times 2$ matrix independent of $\theta$.
\end{Proposition}
The proof is given in Appendix~\ref{sec:proofs}.
For the robust CRPS we have the following result, which also is proved in  Appendix~\ref{sec:proofs}.
\begin{Proposition}\label{prop:scale_rob}
	\changed{Let $\Omega = \mathbb{R}$ and 
		assume that $\mathcal{Q}$ is a class of  probability measures $\Q_{\theta}$ which are location-scale transformations satisfying Assumption~\ref{ass1} for $\alpha=1$. Then the scale function of $S^{ker}_{1,c}$ on $\mathcal{Q}$ is
		$$
		s(\Q_{\theta})=\sigma^{-1} \E_{\Q,\Q}\left[ H_{\Q}(X,Y)\mathbb{I}\left(\left|X-Y\right|<\frac{c}{\sigma}\right)\right],
		$$}
	where $H_{\Q}(X,Y)$ is a $2\times 2$ matrix independent of $\theta$.
\end{Proposition}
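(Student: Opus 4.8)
The plan is to follow the same route as the proof of Proposition~\ref{prop:kernel_scale}, reducing the scale function to a Hessian of the expected score and then tracking how the truncation in \eqref{eq:truncated_kernel} enters. Since $S^{ker}_{1,c}$ is a proper kernel score (Theorem~\ref{thm:kernel}) and, under Assumption~\ref{ass1} with $\alpha=1$, twice differentiable in the parameters, the map $\phi \mapsto G(\phi) := S^{ker}_{1,c}(\Q_\phi,\Q_\theta)$ attains its maximum at $\phi=\theta$ by properness, so $\nabla_\phi G(\theta)=0$ and $p=2$. Expanding $G$ along the ray $\phi_t=\theta+t\sigma r$ to second order then gives
\[
s(\Q_\theta,r) = -\tfrac12\sigma^2\, r^\trsp \nabla^2_\phi G(\theta)\, r,
\]
so everything reduces to computing the Hessian of $G$ at $\phi=\theta$.

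The key device, exactly as for the untruncated kernel, is to move all parameter derivatives onto the smooth density of $\Q_\phi$ rather than onto the kernel $g_c$. This is legitimate because $g_c$ is bounded by $c$, so interchanging differentiation and integration is justified by the finiteness of the moments in Assumption~\ref{ass1}, and it sidesteps the non-smoothness of $g_c$ at $|x-y|=c$ entirely. Writing $G=\tfrac12 \E_{\P,\P}[g_c]-\E_{\P}[g_c(\cdot,Y)]$ as in \eqref{eq:kernel_score} and differentiating, the pure second-derivative (Hessian-of-density) terms of the two pieces cancel against each other at $\phi=\theta$, analogously to the first-order cancellation that forces $\nabla_\phi G(\theta)=0$, and only the score-outer-product term survives:
\[
\nabla^2_\phi G(\theta) = \frac{1}{\sigma^2}\,\E_{\Q,\Q}\!\left[\,g_{c,0}(\sigma|X-Y|)\,\mathbf{m}(X)\mathbf{m}(Y)^\trsp\,\right],
\]
where $g_{c,0}(v)=\min(v,c)$ and $\mathbf{m}$ is the standardized location–scale score vector of $\Q$, built from $\Psi'$, whose relevant moments are exactly those required to be finite in Assumption~\ref{ass1}. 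As $c\to\infty$ this collapses to the matrix $\E_{\Q,\Q}[H^1_\Q(X,Y)]$ of Proposition~\ref{prop:kernel_scale}, providing a consistency check.

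It remains to turn $\min(\sigma|X-Y|,c)$ into the indicator appearing in the statement, and here the crucial fact is that the score is centred, $\E_\Q[\mathbf{m}]=0$, so $\E_{\Q,\Q}[\mathbf{m}(X)\mathbf{m}(Y)^\trsp]=0$ by independence and we may subtract any constant from $g_{c,0}$ inside the expectation. Subtracting $c$ gives $\min(\sigma|X-Y|,c)-c=-(c-\sigma|X-Y|)\,\mathbb{I}(|X-Y|<c/\sigma)$, which vanishes off the truncation region and thereby localizes the whole expectation to $\{|X-Y|<c/\sigma\}$; reinstating the prefactor $-\tfrac12\sigma^2$ and extracting one power of $\sigma$ then yields the stated form, with $H_\Q(X,Y)$ assembled from the same $\theta$-independent outer product $\mathbf{m}(X)\mathbf{m}(Y)^\trsp$ that governs the CRPS case. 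I expect the main obstacle to be essentially technical: carefully justifying the interchange of differentiation and integration for the truncated (non-smooth) kernel, and handling the constant/boundary contributions correctly. The mean-zero identity $\E_\Q[\mathbf{m}]=0$ is precisely what delivers the indicator cleanly, avoiding the Dirac boundary terms at $|x-y|=c$ that a direct differentiation of $g_c$ would otherwise produce and that would be delicate to control.
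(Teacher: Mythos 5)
Your overall route is the paper's: properness plus a second-order Taylor expansion reduces the scale function to the Hessian of $\phi\mapsto S^{ker}_{1,c}(\Q_\phi,\Q_\theta)$ at $\phi=\theta$, and that Hessian is computed by transferring all parameter derivatives onto the location--scale density via the substitution $\tilde x=(x-\mu)/\sigma$, with dominated convergence justified by Assumption~\ref{ass1} --- this is exactly Lemma~\ref{lem1}. Your observation that the two ``pure'' second-derivative contributions cancel at $\phi=\theta$ and only the mixed outer-product term $\sigma^{-2}\,\E_{\Q,\Q}\bigl[g_c(\sigma X+\mu,\sigma Y+\mu)\,v(X)v(Y)^{\trsp}\bigr]$ survives is correct and is precisely the role of \eqref{eq:GradCross} (the appendix proof displays the $H(X)$ form of \eqref{eq:Hessian1}, but the quantity matching the statement's $H_\Q(X,Y)$ is indeed the cross term, as you say). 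The boundedness of $g_c$ even makes your domination argument simpler than the paper's.

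The gap is in your final step. With the rCRPS kernel $g_c(x,y)=\min(|x-y|,c)$ from \eqref{eq:truncated_kernel}, the centring $\E_\Q[v]=0$ does let you replace $\min(\sigma|X-Y|,c)$ by $\min(\sigma|X-Y|,c)-c=-(c-\sigma|X-Y|)\,\mathbb{I}(|X-Y|<c/\sigma)$ inside the expectation, which localizes to the truncation region as you claim. But expanding gives
\begin{equation*}
\sigma\,\E_{\Q,\Q}\bigl[\,|X-Y|\,\mathbb{I}(|X-Y|<\tfrac{c}{\sigma})\,v(X)v(Y)^{\trsp}\bigr]\;-\;c\,\E_{\Q,\Q}\bigl[\mathbb{I}(|X-Y|<\tfrac{c}{\sigma})\,v(X)v(Y)^{\trsp}\bigr],
\end{equation*}
and the second piece is \emph{not} of the form $\sigma$ times a $\theta$-independent matrix integrated against the indicator: the constant $c$ carries no power of $\sigma$ and cannot be absorbed into $H_\Q(X,Y)$ without making it depend on $\sigma$. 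So your own (correct) computation delivers $s(\Q_\theta,r)\propto r^{\trsp}\E_{\Q,\Q}[(c-\sigma|X-Y|)^{+}v(X)v(Y)^{\trsp}]r$ rather than the stated form. The paper sidesteps this because Lemma~\ref{lem1} works with the kernel $\mathbb{I}(|x-y|<c)|x-y|^{\alpha}$, i.e.\ it drops the constant plateau at level $c$, for which the cross term is exactly $\sigma^{-1}\E[\,|X-Y|\,\mathbb{I}(|X-Y|<c/\sigma)\,v(X)v(Y)^{\trsp}]$ and the proposition follows. As written, your assertion that ``extracting one power of $\sigma$ then yields the stated form'' is not justified by the expression you derived; you must either adopt the Lemma~\ref{lem1} kernel convention explicitly or account for the residual $-c\,\E[\mathbb{I}\,v v^{\trsp}]$ term.
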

This result implies that when applying $S^{ker}_{1,c}$ in a situation with varying $\sigma$, i.e., the true predictive distribution has varying scale, observations with large values of $\sigma$ will be less important since $c$ is fixed. Note that it is not possible to set $c$ to depend on $\theta$ since it is unknown. This implies that the robustness will protect against outliers that are large on an absolute scale of the predictive measure, but the robustness cannot protect against outliers for predictions where $\sigma$ is small, i.e., outliers on a relative scale.

In order to make the scoring rule robust against outliers of $\Q_{\theta}$ the bound would need to be scaled with $\sigma$, which as mentioned above is unknown a priori. It is to the authors' knowledge an open question how to create a proper scoring rule that protects against outliers in $\Q_{\theta}$. One option that could work in practice is to set $c$ dependent on some reference predictive distribution. Since $c$ should only protect against outliers it does not seems as invasive as scaling the actual scoring rule with a reference score, but is nevertheless still problematic.

\section{Generalized proper kernel scoring rules}\label{sec:standardized_scores}
In the previous section, we saw that one could make the kernel scores robust by adjusting the kernel, but that they in general are scale dependent. Because of this, we now want to construct a new family of scoring rules which can be made locally scale invariant. This is done in the following theorem, where the generalized proper kernel scores are introduced. 

\begin{Theorem}\label{thm:convexkernel}
	Let $\P$ be a Borel probability measure on a Hausdorff space $\Omega$. 
	Assume that $g$ is a non-negative, continuous negative definite kernel on $\Omega \times \Omega$ and that $h$ is a monotonically decreasing convex differentiable function on $\mathbb{R}^+$.  Then the scoring rule
	\begin{equation}\label{eq:scaled_score}
		\begin{split}
			S^h_g(\P, y) := &h\left(\E_{\P,\P}\left[g(X,Y)\right]\right) 
			+  2h'\left(\E_{\P,\P}\left[g(X,Y)\right]\right) \left(\E_{\P}\left[g(X,y)\right] - \E_{\P,\P}\left[g(X,Y)\right] \right)  
		\end{split}
	\end{equation}
	is proper on the class of Borel probability measures on $\Omega$ which satisfy $\E_{\P,\P}\left[ g(X,Y) \right] < \infty$.
\end{Theorem}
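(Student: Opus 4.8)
The plan is to deduce properness from exactly two properties: the convexity of $h$ and the defining inequality of a negative definite kernel. Fix two measures $\P,\Q$ in the stated class and abbreviate $A=\E_{\P,\P}[g(X,Y)]$, $B=\E_{\Q,\Q}[g(X,Y)]$, and $C=\E_{\P,\Q}[g(X,Y)]$, the two arguments of $g$ always being drawn independently from the indicated measures. First I would evaluate the two scores to be compared. Averaging \eqref{eq:scaled_score} over $y\sim\Q$ and using $\E_{\Q}\E_{\P}[g(X,y)]=C$ gives $S^h_g(\P,\Q)=h(A)+2h'(A)(C-A)$, while putting $\P=\Q$ annihilates the second term, so $S^h_g(\Q,\Q)=h(B)$. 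Properness is therefore equivalent to the single inequality $h(B)\geq h(A)+2h'(A)(C-A)$.

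The one fact I would draw from negative definiteness is the bound $A+B-2C\leq 0$. This is immediate: applying the defining inequality of a negative definite kernel to the signed measure $\P-\Q$ of total mass zero yields $\E_{\P,\P}[g]-2\E_{\P,\Q}[g]+\E_{\Q,\Q}[g]\leq 0$. Equivalently, it is precisely properness of the kernel score in Theorem~\ref{thm:kernel} applied with forecast $\P$ and truth $\Q$, since $\Sker_g(\Q,\Q)=-\tfrac12 B$ and $\Sker_g(\P,\Q)=\tfrac12 A - C$.

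The final step combines this with the geometry of $h$. Convexity and differentiability give the tangent bound $h(B)\geq h(A)+h'(A)(B-A)$, so it suffices to show $h'(A)(B-A)\geq 2h'(A)(C-A)$, which rearranges to $h'(A)(A+B-2C)\geq 0$. But $h$ is monotonically decreasing, hence $h'(A)\leq 0$, and we have just shown $A+B-2C\leq 0$; the product of two non-positive reals is non-negative, so this holds. Chaining the two bounds gives $h(B)\geq h(A)+2h'(A)(C-A)$, i.e.\ $S^h_g(\Q,\Q)\geq S^h_g(\P,\Q)$, as required.

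I do not expect a deep obstacle here; the argument is essentially a one-line convexity estimate once the sign bookkeeping is arranged, and the only genuine care needed is to confirm that the monotonicity of $h$ and the direction of the negative-definiteness inequality cooperate in the product $h'(A)(A+B-2C)$. The sole technical loose end is finiteness of the cross term $C$, so that $h$ and $h'$ are evaluated only at finite arguments: this follows from the same negative-definiteness bounds used in the proof of Theorem~\ref{the:kernelrob}, which dominate $g(x,y)$ by marginal quantities controlled by $A$ and $B$, and so requires no hypothesis beyond $\E_{\P,\P}[g(X,Y)]<\infty$.
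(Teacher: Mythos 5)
Your proof is correct and follows essentially the same route as the paper's: the same reduction to $h(B)\geq h(A)+2h'(A)(C-A)$, the same key inequality $2C\geq A+B$ (which the paper imports from a cited harmonic-analysis theorem rather than re-deriving from the signed-measure form of negative definiteness), and the same combination of the convexity tangent bound with the sign of $h'$. Your extra remarks on the finiteness of the cross term and on $h'\leq 0$ versus $h'<0$ are minor tidy-ups of details the paper leaves implicit.
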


\begin{proof}
	For two measures $\P$ and $\Q$ in the class of Borel probability measures on $\Omega$ which satisfy $\E_{\P,\P}\left[ g(X,Y) \right] < \infty$, we need to establish that
	$S^h_g\left(\Q,\Q\right) \geq S^h_g\left(\P,\Q\right)$. By \citep[][Theorem 2.1, p.235]{van2012harmonic}, 
	\begin{align*}
		\E_{\P,\Q}\left[g(X,Y)\right] \geq \frac{1}{2}\left(\E_{\P,\P}\left[g(X,Y)\right] + \E_{\Q,\Q}\left[g(X,Y)\right]\right).
	\end{align*}
	Using this result and that
	$h'\left(\E_{\P,\P}\left[g(X,Y)\right]\right)<0$  it follows that
	\begin{align*}
		2h'\left(\E_{\P,\P}\left[g(X,Y)\right]\right)   & \left(\E_{\P,\Q}\left[g(X,Y)\right]- \E_{\P,\P}\left[g(X,Y)\right] \right) \leq \\
		&h'\left(\E_{\P,\P}\left[g(X,Y)\right]\right) \left(\E_{\Q,\Q}\left[g(X,Y)\right] - \E_{\P,\P}\left[g(X,Y)\right] \right).
	\end{align*}
	With this result and the fact that  $h$ is convex, we get
	\begin{align*}
		S^h_g(\P, \Q) \leq&  
		h\left(\E_{\P,\P}\left[g(X,Y)\right]\right)  + 
		h'\left(\E_{\P,\P}\left[g(X,Y)\right]\right)\left(\E_{\Q,\Q}\left[g(X,Y)\right] - \E_{\P,\P}\left[g(X,Y)\right] \right) \\
		\leq& h\left(\E_{\Q,\Q}\left[g(X,Y)\right]\right) = S^h_g(\Q,\Q).
	\end{align*}
\end{proof}

It should be noted that the idea of the theorem is similar to the construction of the supporting hyperplane in \cite[p.22]{Dawid1998}. Furthermore, the theorem could be generalized slightly by not requiring that $g$ is a continuous negative definite kernel but rather a function satisfying $g(x,y)\leq \frac12 \left(g(x,x) + g(y,y)\right)$. 

Clearly, the class of generalized proper kernel scores $S^h_g$ contains the regular proper kernel scores as a special case, obtained by choosing $h(x) = -x/2$. These scoring rules thus have the least convex (as it both convex and concave) and therefore fastest declining $h$ possible (up to a scaling factor). Further, choosing $g(x,y)=|x-y|$ and $h(x)=-\frac{1}{2} x$ for $\Omega = \mathbb{R}$ one obtains the regular CRPS. 
With $h(x) = -\frac{1}{2} \log(x)$, the proper scoring rule is given by
$$
S^{-\frac{1}{2} \log(x)}_g(\P, y) = -\frac{1}{2}\log(\E_{\P,\P}[g(X,Y)]) - \frac{\E_{\P}[g(X,y)]}{\E_{\P,\P}[g(X,Y)]} +1.
$$
Due to the second term in this expression, we will refer to it as the standardized kernel scoring rule with kernel $g$, denoted $\Ssker_{g}(\P,y)$ for short.

There are of course a myriad of options for $h$.  An interesting option is $h(x)=-\sqrt{x}$ which should act similarly to the standardized kernel score.
This flexibility is important since it allows for a wide range of generalized entropy terms, corresponding to different penalties for a priori uncertainty, while remaining proper scoring rules, see Appendix~\ref{sec:entropy}.

As for the usual kernel scores, $g_{\alpha}(x,y) = |x-y|^{\alpha}$ for $\alpha\in (0,2]$ is a natural choice of kernel for $\Omega = \mathbb{R}$, and we introduce the shorthand notation $\Ssker_{\alpha}(\P,y) := \Ssker_{g_\alpha}(\P,y)$ for the corresponding standardized kernel scoring rule. The special case $\Ssker_{1}(\P,y)$ is interesting since it provides a standardized analogue to the CRPS, which is the motivation for the SCRPS in \eqref{eq:scrps}. Using  that $\scrps = 1 + \Ssker_{1}$, where $\Ssker_{1}(\P,y)$ is proper by Theorem~\ref{thm:convexkernel}, we get:
\begin{Corollary}\label{cor:scrps}
	The SCRPS defined in \eqref{eq:scrps}  is a proper scoring rule on the class $\mathcal{P}_1$ of Borel probability measures on $\Omega = \mathbb{R}$ with $\E_{\P}\left[ |X| \right] < \infty$.
\end{Corollary}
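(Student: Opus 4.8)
The plan is to read off the SCRPS as a special case of the generalized proper kernel score in Theorem~\ref{thm:convexkernel} and then invoke that theorem directly. The key observation is that the standardized kernel score $\Ssker_1$ is precisely $S^h_g$ with the kernel $g = g_1$, i.e.\ $g_1(x,y) = |x-y|$, and the outer function $h(x) = -\tfrac{1}{2}\log(x)$. Comparing the displayed formula for $S^{-\frac{1}{2}\log(x)}_g$ with the definition of the SCRPS in \eqref{eq:scrps} shows that the two coincide up to the additive constant $1$ (the $+1$ term appearing in $\Ssker_1$). Since an additive constant cancels in the difference $S(\Q,\Q) - S(\P,\Q)$, propriety (and strict propriety) is invariant under such a shift, and it therefore suffices to prove that $\Ssker_1$ is proper on $\mathcal{P}_1$.

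To apply Theorem~\ref{thm:convexkernel} I would verify its two structural hypotheses. For the kernel, $g_1(x,y) = |x-y|$ is non-negative, continuous, and negative definite, the latter being the $\alpha = 1$ member of the family $g_\alpha(x,y) = |x-y|^\alpha$, $\alpha \in (0,2]$, recorded earlier. For the outer function, $h(x) = -\tfrac{1}{2}\log(x)$ is differentiable on $\mathbb{R}^+$ with $h'(x) = -\tfrac{1}{2x} < 0$, hence monotonically decreasing, and $h''(x) = \tfrac{1}{2x^2} > 0$, hence convex. Thus $h$ is a monotonically decreasing convex differentiable function on $\mathbb{R}^+$, and the hypotheses of Theorem~\ref{thm:convexkernel} are met, so $\Ssker_1 = S^h_{g_1}$ is proper on the class it covers.

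What remains is to reconcile the two descriptions of the domain, and this is the only step requiring any care. Theorem~\ref{thm:convexkernel} asserts propriety on the class of Borel measures satisfying $\E_{\P,\P}[g_1(X,Y)] = \E_{\P,\P}[|X-Y|] < \infty$, whereas $\mathcal{P}_1$ is defined by the first-moment condition $\E_{\P}[|X|] < \infty$. I would close the gap with the triangle inequality: for independent $X,Y\sim\P$ one has $\E_{\P,\P}[|X-Y|] \leq \E_{\P}[|X|] + \E_{\P}[|Y|] = 2\,\E_{\P}[|X|] < \infty$, so $\mathcal{P}_1$ is contained in the class covered by Theorem~\ref{thm:convexkernel}. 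Hence $\Ssker_1$, and therefore the SCRPS, is proper on all of $\mathcal{P}_1$. I do not anticipate a genuine obstacle beyond this domain inclusion and the additive-constant invariance of propriety, together with the tacit restriction to non-degenerate $\P$ (where $\E_{\P,\P}[|X-Y|] > 0$) needed for the logarithm and the ratio in \eqref{eq:scrps} to be well defined.
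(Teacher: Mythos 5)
Your proposal is correct and follows essentially the same route as the paper, which also obtains the corollary by observing that the SCRPS differs from $\Ssker_{1}=S^{-\frac{1}{2}\log(x)}_{g_1}$ only by an additive constant and invoking Theorem~\ref{thm:convexkernel}. The only additions you make are the (trivial but worth stating) triangle-inequality check that $\E_{\P,\P}[|X-Y|]\leq 2\,\E_{\P}[|X|]<\infty$ so that $\mathcal{P}_1$ lies in the theorem's domain, and the remark that non-degenerate $\P$ is needed for the logarithm to be defined; both are steps the paper leaves implicit.
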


Note also that \begin{align*}
	\Ssker_2(\P,y) &= 
	-\frac{1}{2}\log(\E_{\P,\P}[(X-Y)^2]) - \frac{\E_{\P}[(X-y)^2]}{\E_{\P,\P}[(X-Y)^2]} +1 \\
	&=
	-\frac{\left(y-\E_{\P}\left[Y\right]\right)^2}{2\V_{\P}\left[Y\right]} - \frac{1}{2} \log\left(\V_{\P}\left[Y\right] \right) + \frac1{2}(1-\log(2)),
\end{align*}
which is the Dawid-Sebastiani score \citep{DS1999} up to an additive constant.
The following proposition shows the local scale invariance of $\Ssker_{\alpha}$ for any $\alpha \in (0,2]$.

\begin{Proposition}
	\label{prop:scale_stand}
	Let $\Omega=\mathbb{R}$ and assume that $\mathcal{Q}$ is a class of  probability measures $\Q_{\theta}$ which are location-scale transformations satisfying Assumption~\ref{ass1} for $\alpha \in (0,2]$. 
	\changed{Then 
		$\Ssker_{\alpha}$ 
		has scale function $s(\Q_{\theta}) = \frac{1}{\sigma^2}\E_{\Q,\Q}[H_{\Q}^{\alpha}(X,Y)]$ on $\mathcal{Q}$,} where $H^{\alpha}_{\Q}(X,Y)$ is a $2\times 2$ matrix independent of $\theta$.
\end{Proposition}
The proof is given in Appendix~\ref{sec:proofs}.
The robustness of the standardized kernel scores is clearly equal to that of the corresponding kernel score. Therefore, $\Ssker_{\alpha}$ has the same robustness properties as $\Sker_{\alpha}$. We formulate this as a theorem.
\begin{Theorem}\label{thm:sta_rob}
	Let $\P$ be a Borel probability measure on a normed vector space $(\Omega,\|\cdot\|_\Omega)$. Assume that	 $g$ is a non-negative, continuous negative definite kernel on $\Omega \times \Omega$, such that  $g(x,y)=g_0(\|x-y\|_\Omega)$, with $g_0(x) \asymp |x|^\alpha$ for some $\alpha>0$, and $\E_{\P}[g_0(\|X\|_\Omega)] < \infty$. 
	Then $|\Ssker_{g}(\P,y)| \asymp\|y\|_\Omega^\alpha$.
\end{Theorem}
The proof is omitted as it is almost identical to that of Theorem \ref{the:kernelrob}.

Another interesting scoring rule for $\Omega=\mathbb{R}$ is the standardized kernel score which uses the kernel \eqref{eq:truncated_kernel}, which we denote as rSCRPS or $\Ssker_{1,c}$, where $c$ is the constant in the function $g_c$. It could be thought of as a robust version of the SCRPS, but it should be noted that it cannot be locally scale invariant by the same reasons as for the rCRPS. We will however later use this in one of the applications as an option that protects against outliers but has better scaling properties than the CRPS.

So far, we have only considered how to formulate scale invariant versions of kernel scores. However, there exists several other popular scoring rules, such as the continuous ranked logarithmic score \citep{juutilainen2012exceedance,todter2012generalization},
which are not defined through kernels like the CRPS. 
It might not be clear whether they are scale invariant, and Theorem~\ref{thm:convexkernel} cannot be directly used to create standardized versions. However, in those cases, the following result can instead be used. The theorem defines a transformation of a negative proper scoring rule, that is still a proper scoring rule and which, at least intuitively, should be less scale dependent.
\begin{Theorem}\label{thm:general_scale}
	Let $\Omega$ be a Hausdorff space and let $S$ denote a scoring rule that is  proper on a class of Borel probability measures $\mathcal{P}\subset\mathcal{P}_S$ on $\Omega$ such that $S(\P,y) <0$ for all $\P\in\mathcal{P}$ and $y\in\Omega$. Then
	\begin{align*}
		\Strans_S(\P,y) := \frac{S(\P,y)}{|S(\P,\P)|} -  \log(|S(\P,\P)|)
	\end{align*}
	is also a proper scoring rule on $\mathcal{P}$. Further, if $S$ is strictly proper on $\mathcal{P}$, then so is $	\Strans_S$.
\end{Theorem}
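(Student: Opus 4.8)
The plan is to verify the defining inequality of propriety, $\Strans_S(\Q,\Q) \ge \Strans_S(\P,\Q)$, directly, and then track the equality case to obtain strict propriety. First I would observe that since $\P \in \mathcal{P} \subset \mathcal{P}_S$, Definition~\ref{def:measuresF} guarantees $0 < |S(\P,\P)| < \infty$, so both $\log(|S(\P,\P)|)$ and the ratio appearing in $\Strans_S$ are well defined and finite (positivity of $|S(\P,\P)|$ comes from the negativity hypothesis). Because $|S(\P,\P)|$ does not depend on the outcome $y$, the expectation over $y \sim \Q$ passes through linearly, giving the clean identity
\begin{align*}
\Strans_S(\P,\Q) = \frac{S(\P,\Q)}{|S(\P,\P)|} - \log(|S(\P,\P)|).
\end{align*}

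Next I would introduce the shorthand $p = -S(\P,\P)$, $q = -S(\Q,\Q)$, and $m = -S(\P,\Q)$, all strictly positive by the negativity assumption on $S$. Propriety of $S$, i.e. $S(\Q,\Q) \ge S(\P,\Q)$, then becomes simply $m \ge q$. A direct computation, using $\Strans_S(\Q,\Q) = -1 - \log q$, yields
\begin{align*}
\Strans_S(\Q,\Q) - \Strans_S(\P,\Q) = \frac{m}{p} + \log\frac{p}{q} - 1.
\end{align*}
Applying $m \ge q$ bounds this below by $\frac{q}{p} + \log\frac{p}{q} - 1$, which under the substitution $u = p/q > 0$ is exactly $\phi(u) := \frac1u + \log u - 1$.

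The remaining step is the elementary one-variable inequality $\phi(u) \ge 0$ for all $u>0$: since $\phi'(u) = (u-1)/u^2$ vanishes only at $u=1$ and changes sign from negative to positive there, $\phi$ attains its global minimum $\phi(1) = 0$, so $\phi(u) \ge 0$ with equality only at $u=1$. This establishes propriety. For strict propriety, I would track the two inequalities just used: equality in the full chain forces, in particular, $m = q$, that is $S(\P,\Q) = S(\Q,\Q)$, and if $S$ is strictly proper this alone forces $\P = \Q$; conversely $\P = \Q$ makes the difference vanish. The main (though modest) obstacle is bookkeeping rather than analysis: one must correctly combine the propriety inequality $m \ge q$ with the convexity inequality $\phi \ge 0$, and confirm that the single condition $m = q$ already triggers strict propriety through the strict propriety of $S$, so that no separate treatment of the $p = q$ equality case is needed.
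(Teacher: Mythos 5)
Your proof is correct and is essentially the paper's argument: both rest on the same two ingredients, namely propriety of $S$ applied to the numerator (your $m\ge q$ step) and the elementary fact that $u\mapsto \frac1u+\log u$ is minimized at $u=1$ (the paper phrases this as $x\mapsto \frac{c}{x}-\log x$ being maximized at $x=-c$ for $c<0$). Your explicit difference computation and the equality-case bookkeeping for strict propriety are just a slightly more detailed presentation of the same chain of inequalities.
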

\begin{proof}
	Let $\Q, \P \in \mathcal{P}$.
	Since $S$ is a proper scoring rule, we have
	\begin{align*}
		\Strans_S(\P,\Q)=	\frac{S(\P,\Q)}{|S(\P,\P)|}  -  \log(|S(\P,\P)|) \leq  \frac{S(\Q,\Q)}{|S(\P,\P)|}  -  \log(|S(\P,\P)|).
	\end{align*}
	If $S$ is strictly proper the inequality is strict unless $\P=\Q$.
	Now since for $c<0$  the function $\frac{c}{x}-\log(x)$ attains its maximum value ($-1 - \log |c|$) at $x=-c$ 
	it follows that
	\begin{align*}
		\frac{S(\Q,\Q)}{|S(\P,\P)|} - \log(|S(\P,\P)|)\leq  \frac{S(\Q,\Q)}{|S(\Q,\Q)|} - \log(|S(\Q,\Q)|) = \Strans_S(\Q,\Q).
	\end{align*}
\end{proof}

\changed{One can note that the Dawid-Sebastiani score is obtained by applying Theorem~\ref{thm:general_scale} to the MSE $S(\mathbb{P},y) = (\pE_{\P}[X] - y)^2$. If one instead applies} the theorem to the CRPS, the result is a constant plus two times the SCRPS defined in \eqref{eq:scrps}. This serves as another motivation for why the SCRPS can be seen as a standardized version of the CRPS. Further, the theorem strengthens Corollary~\ref{cor:scrps} since it shows that the SCRPS in fact is strictly proper on $\mathcal{P}_1$.   

\begin{table}
	\caption{\label{tab:summary} Summary of local scale dependence and robustness properties for various scoring rules. }
	\centering
	\begin{tabular}{lcc}
		\toprule
		Scoring rule &  Locally scale invariant & Robust\\
		\midrule
		CRPS 					 & No & No\\
		SCRPS 					& Yes & No\\
		rCRPS 					& No & Yes\\
		rSCRPS					& No & Yes\\
		log-score				& Yes & No \\
		Dawid-Sebastiani	& Yes & No\\
		Hyvärinen				& No & No \\
		\bottomrule
	\end{tabular}
\end{table}
\changed{We end this section by summarizing the scale dependence and robustness properties of the various scores that we have discussed in Table~\ref{tab:summary}. In the table, the robustness of the log-score and the Hyvärinen score depends on the distribution.}

\section{Applications}\label{sec:application}

\subsection{A stochastic volatility model}
In this application, we want to highlight the difference between locally scale invariant and scale dependent scoring rules when there is variability in the scaling of the data which is caused by the model. Consider the following stochastic volatility model \citep{shephard1994partial},
\begin{align*}
	X_t &= a X_{t-1} + \epsilon^X_t, \quad t = 1, 2,\ldots, \\
	y_t &= \epsilon_t^Y \exp(X_t),
\end{align*}
where $\epsilon^X_t \sim \pN(0,\sigma^2_X)$ and $\epsilon^Y_t \sim \pN(0,\sigma^2_Y)$, with $a=0.95$, $\sigma_Y=1$, and $\sigma_X=0.5$. 

\begin{figure}[t]
	\centering
	\includegraphics[scale=0.3]{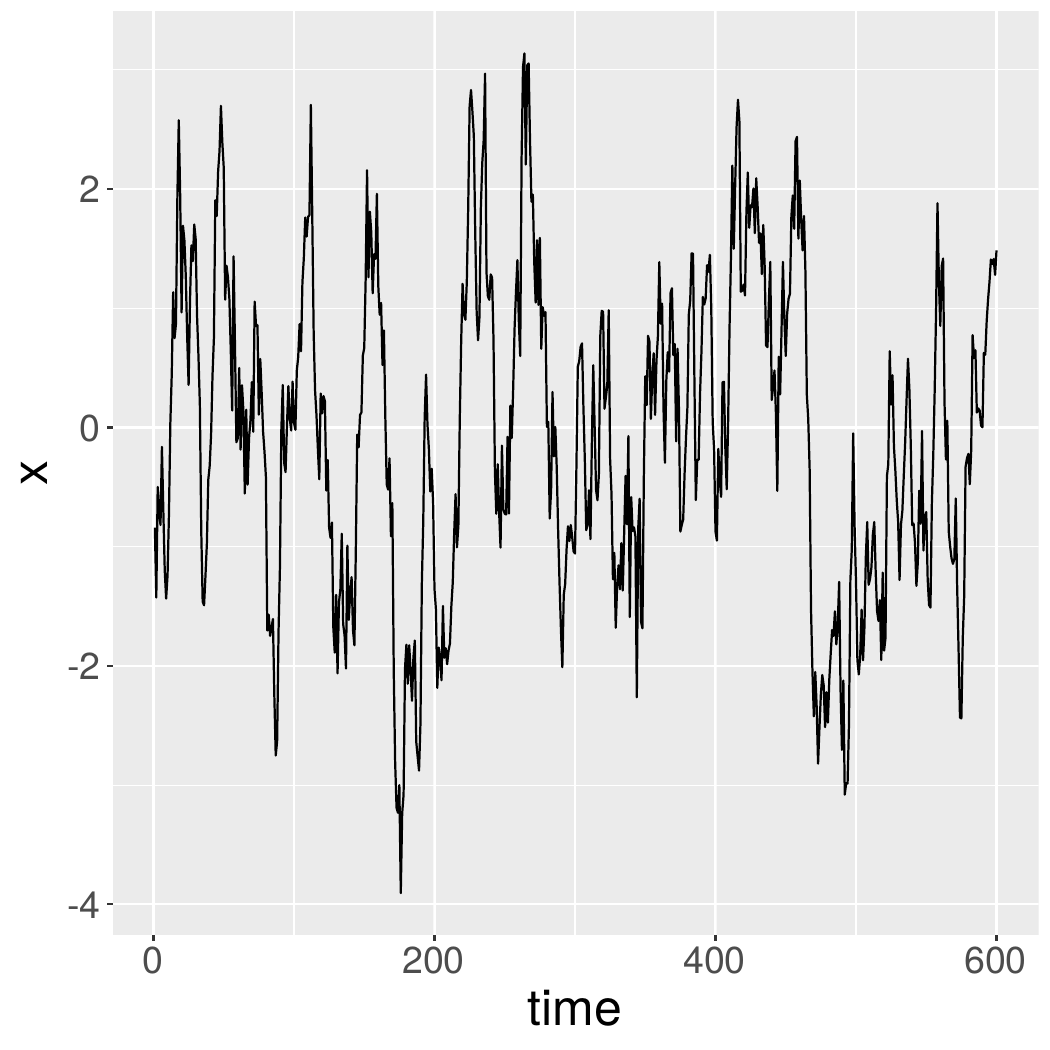}
	\includegraphics[scale=0.3]{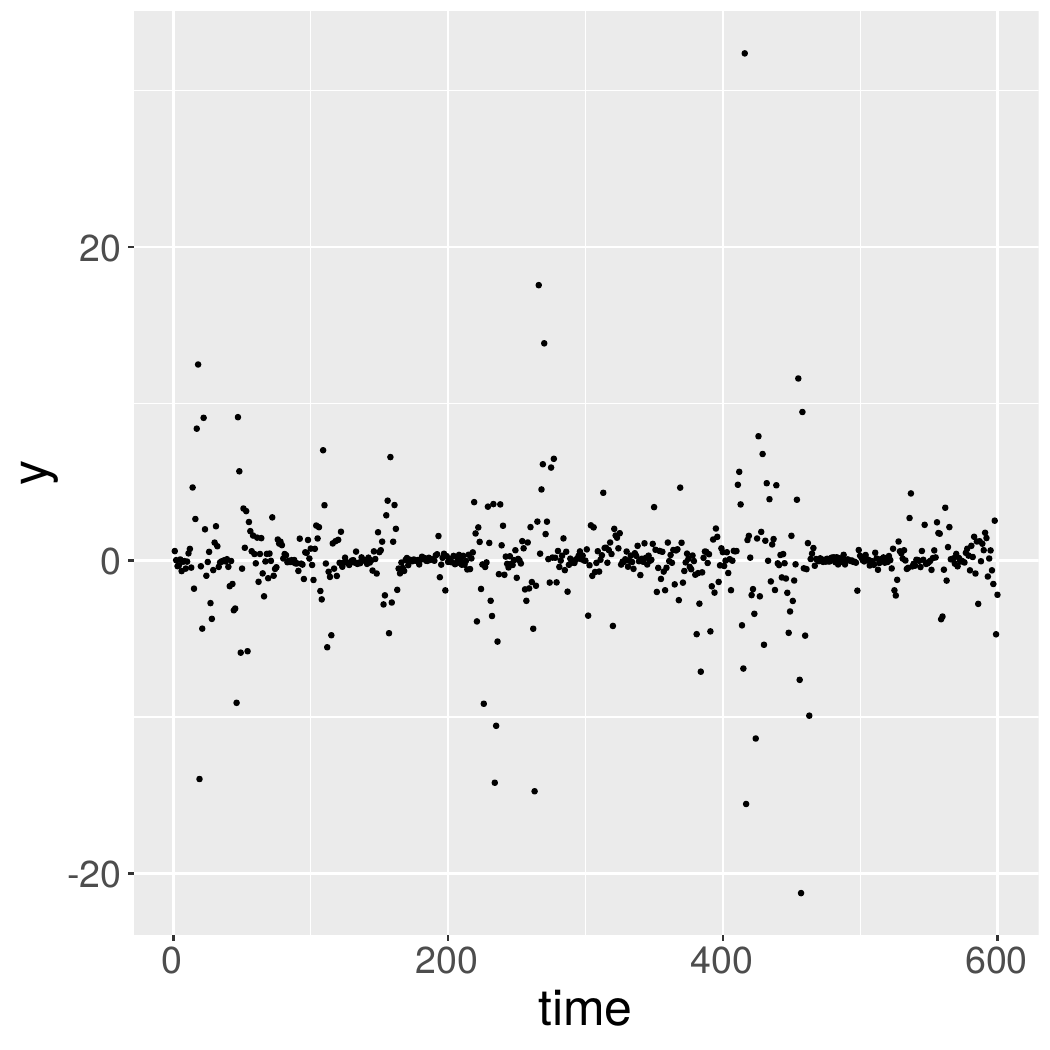}
	\caption{A realization of the volatility $x$ (left), and the observations of the model $y$ (right).}
	\label{fig:SimVol}
\end{figure}

Figure \ref{fig:SimVol} displays realizations of $Y_t$ and $X_t$. 
Although the observations are equally spaced, the varying volatility will result in that the proper scoring rules will put different importance on the different observations. We explore the model under a simplified assumption that we observe $X_t$ and want to predict $y_t$, which simplifies the analysis without altering the message that we want to convey.

To see how the stochastic volatility affects \changed{forecast rankings}, we compare how often the average score for each scoring rule is higher for the correct model compared to models with misspecified $\sigma_Y$. We simulate $500$ different realizations of the volatility model, where each simulation is a time series of length $600$. The left plot of Figure \ref{fig:SelectVol} shows the percentage of these realizations where the correct model, with $\hat{\sigma}_Y=\sigma_Y$, is chosen (has the largest \changed{average} score) for the  different scoring rules. The right plot of Figure \ref{fig:SelectVol}  shows  the percentage of these realizations where the correct model was significantly better then the alternative using a Diebold--Mariano test \citep{diebold1995paring}. As alternative models in the comparison, one with $\hat{\sigma}_Y = \sigma_Y+ \Delta$ and one with $\hat{\sigma}_Y = \sigma_Y - \Delta$ are used. The figure shows the results as  functions of $\Delta \in (0,0.5)$. \changed{Note that the SCRPS and the log score are virtually identical whereas both the CRPS and the Hyvärinen performs considerably worse in finding the true parameter. This pattern is even stronger when considering how often the model with true parameter is significantly better than the others.}
Further comparisons for this example can be seen in Appendix~\ref{sec:entropy}.
It should be noted here that, since all scores are proper, the probability of choosing the correct model will converge to one as the length of the series goes to infinity. However these pre-asymptotic differences can have a big effect on applications with limited amounts of data.


\begin{figure}
	\centering
	\includegraphics[scale=0.19]{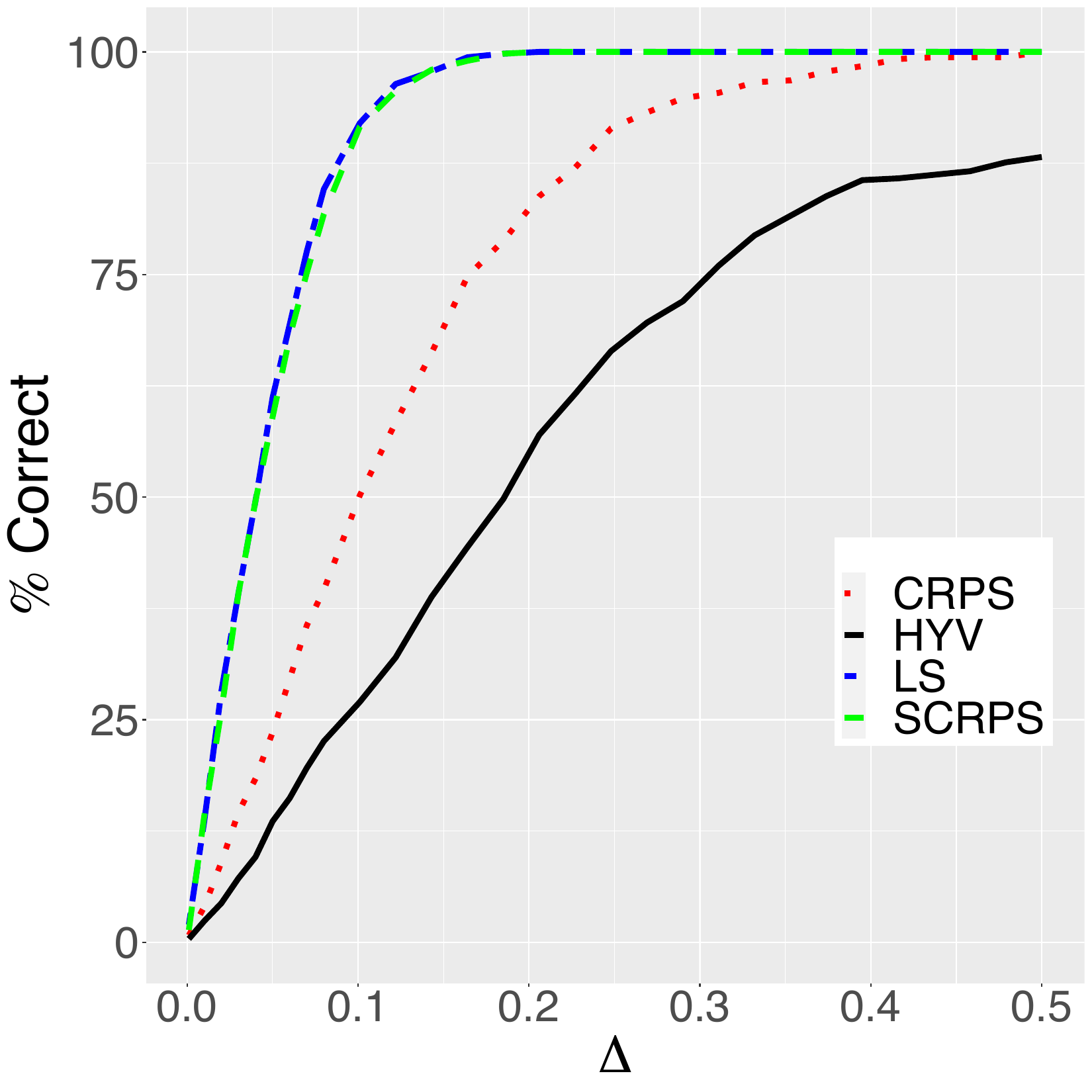}
	\includegraphics[scale=0.19]{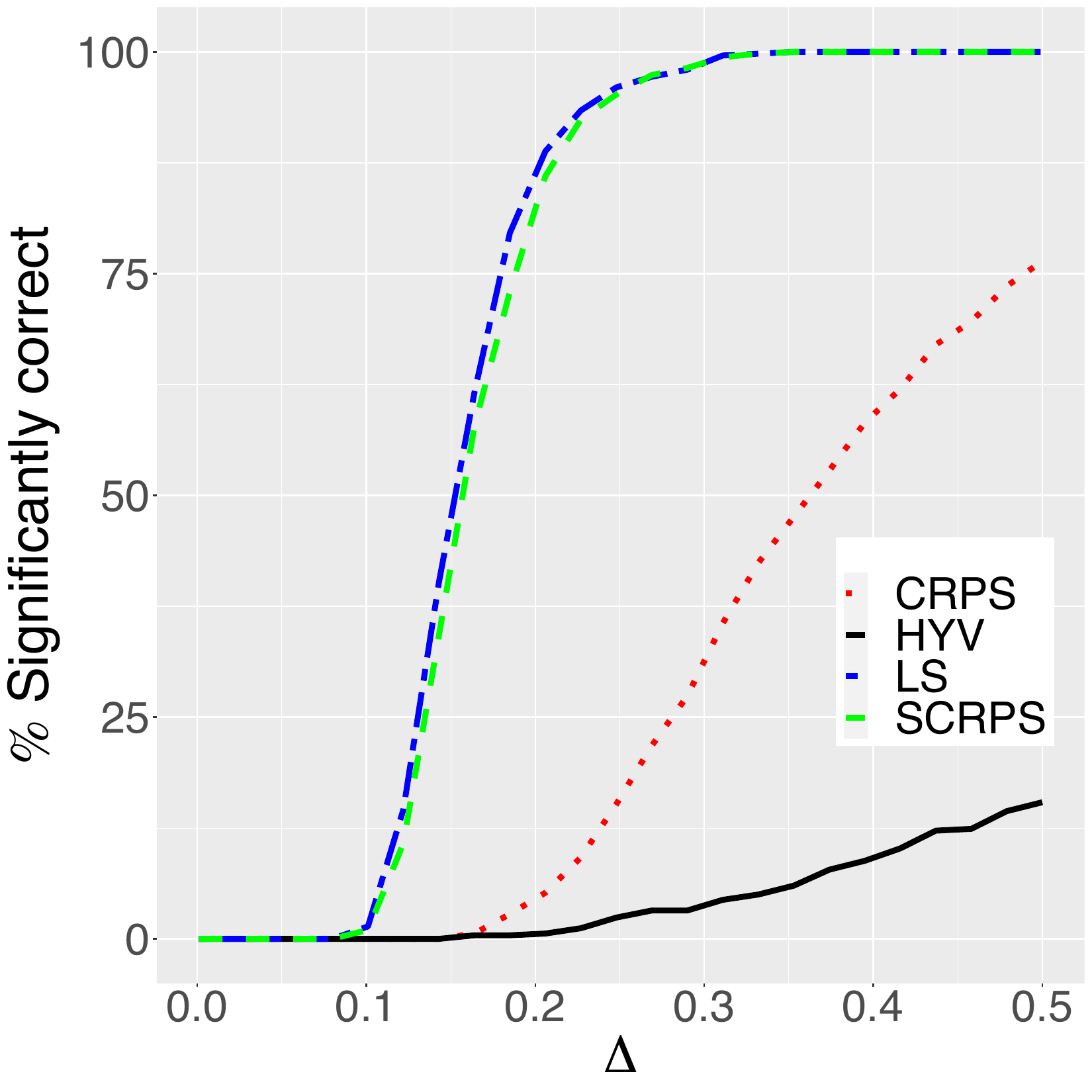}
	\caption{\changed{Percentages of times the true model, with $\hat{\sigma}_Y=\sigma_Y$ for $\sigma_Y=1$, is selected over two alternative models model with $\hat{\sigma}_Y = \sigma_Y \pm \Delta$, for four different scoring rules (left). The right panel shows the percentages of times the true model is found to be significantly better than the alternatives using a DM test with $\alpha=0.05$.
			Note that curves go to zero as $\Delta\rightarrow 0$ for each scoring rule since for each fixed set of observations, the value of $\hat{\sigma}_Y$ that minimizes the average score  is never exactly equal to $\sigma_Y$, and if it is outside the interval $[\sigma_Y - \Delta,  \sigma_Y + \Delta]$ then one of the alternative models will have the best average score. }  }
	\label{fig:SelectVol}
\end{figure}

\subsection{An application from spatial statistics}
\label{sec:spatstat}
A common use of proper scoring rules is to evaluate the predictive performance of random field models in spatial statistics. As an illustration of this, we generate $n=100$ observations $y_i = X(\mv{s}_i)$, $i=1,\ldots, n$ of a mean-zero Gaussian random field with a Mat\'ern covariance function 
$$
\Cov(X(\mv{s}),X(\mv{s}')) = C(\|\mv{s}-\mv{s}'\|;\mv{\Psi}) = \frac{\sigma^2}{2^{\nu-1}\Gamma(\nu)} (\kappa \|\mv{s}-\mv{s}'\|)^{\nu} K_{\nu}(\kappa \|\mv{s}-\mv{s}'\|).
$$
Here $\mv{\Psi} = (\kappa,\sigma,\nu)$ is a vector with the parameters of the model which we set to values that could occur in applications. Specifically, we use 
$(50, 1, 3)$ so that the field has variance $1$, is two times mean-square differentiable, and has a practical correlation range of approximately $0.1$. A simulation of the model is shown in Figure~\ref{fig:spatial_sim}, which also shows an example of the observation locations $\mv{s}_i$ drawn at random from a uniform distribution on $[0,1]\times [0, 1]$. \changed{For more details about spatial statistics and spatial prediction using Gaussian random fields, see, e.g., \citet[][Chapter 2]{Handbook2010}. }

\begin{figure}[t]
	\centering
	\includegraphics[width=0.45\linewidth]{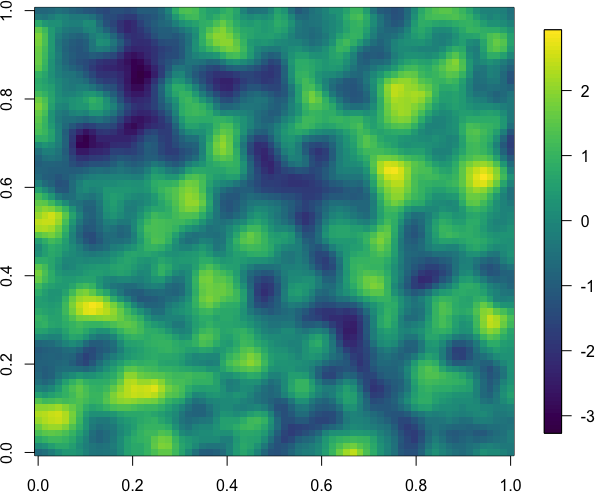}
	\includegraphics[width=0.39\linewidth]{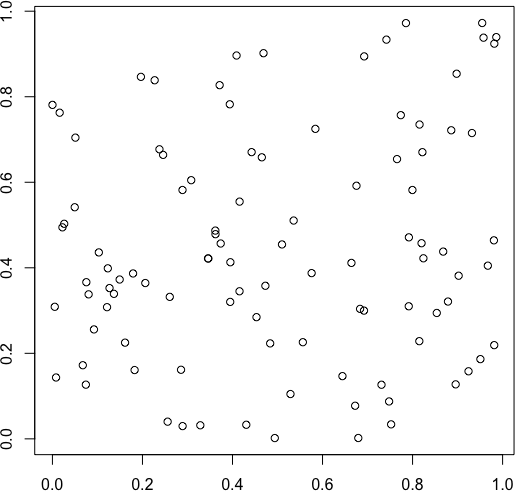}
	\caption{A Gaussian random field simulation (left) and observation locations (right).}
	\label{fig:spatial_sim}
\end{figure}

A measure of the predictive quality of a model with parameters $\mv{\Psi}^*$ is the average score \eqref{eq:average_score} in a leave-one-out cross-validation scenario. That is, $\P_{i} = \pN(\mu_i(\mv{\Psi}^*), \sigma_i^2(\mv{\Psi}^*))$ is the conditional distribution of $X_i$ given given all data except that at location $\mv{s}_i$, which we denote by $\mv{y}_{-i}$. If we let $\mv{\Sigma}$ denote the covariance matrix of $\mv{y}_{-i}$, and let $\mv{c}$ be a vector with $n-1$ elements $\{C(\|s_i - s_j\|;\mv{\Psi}^*), j\neq i\}$, the parameters of the predictive distribution are $\mu_i(\mv{\Psi}^*) = \mv{c}^T \mv{\Sigma}^{-1}\mv{y}_{-i}$ and $\sigma_i^2(\mv{\Psi}^*) = \sigma^2 - \mv{c}^T \mv{\Sigma}^{-1}\mv{c}$.
One can note that $\sigma_i^2(\mv{\Psi}^*)$ depends on the spatial configuration of the observation locations, where prediction locations close to observation locations will have lower variances.

Scale dependence of scoring rules increases the variance of the values for large distances, which may result in a larger variance of the average score. That the average CRPS has a larger variance than the average SCRPS means that it could be more likely that $S_n(\mv{\Psi}) < S_n(\mv{\Psi}^*)$ even if the data is generated using the parameters $\mv{\Psi}$. That is, it is more likely that the incorrect model choice is made if the average score is used for model selection. This is illustrated in the top left panel of Figure~\ref{fig:spatialKappa}, which shows the proportion of times that $S_n(\mv{\Psi}) > S_n(\mv{\Psi}^*)$ as a function of $\Delta$ when $\mv{\Psi}^* = (\kappa\pm \Delta, \sigma, \nu)$ and $n=100$ observations is generated using the parameters $\mv{\Psi} = (\kappa, \sigma, \nu) = (50, 1, 3)$. The top right panel shows the same result with $n=200$ observations. In both cases, the results are shown for the CRPS, the SCRPS, and the log-score, as well as the robust versions of the CRPS and the SCRPS with a limit value $c=2$. This limit value is equal to two times the variance of the field and is thus a quite high value given that the predictive variances often will be much lower than this. The limit should therefore not affect the predictions except at the locations close to potential outliers. 
One can note that, compared to the CRPS and the robust CRPS, the log-score, the SCRPS, and the robust SCRPS more often make the correct model choice for a given value of $\Delta$. One can also note that the robust scores in this case perform similarly to the regular scores, since the value of $c$ is rather high and since there are no outliers. The results are based on $2000$ different simulations of the field $X$ and the observation locations when $n=100$ and on $1000$ simulations when $n=200$. 

To illustrate why the robust scores may be useful, we redo the same simulation study with only one difference: For one of the observations $y_i$, chosen at random, we add a $\pN(0,5^2)$ variable, which thus makes this observation an outlier that does not follow the assumed model. The lower row of Figure~\ref{fig:spatialKappa} shows the results for this case. One can note that the outlier makes it more likely to choose incorrect model, but that this effect is reduced if the robust scores are used. 
In summary, if one were to choose one scoring rule to use for this type of data, where outlier may be present, the robust SCRPS is likely a good choice since it performs well both with and without outliers. 

\begin{figure}[!t]
	\centering
	\includegraphics[width=0.4\linewidth]{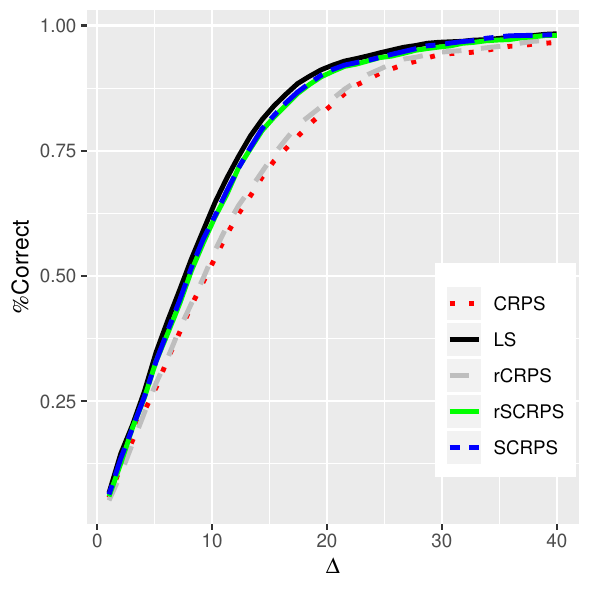}
	\includegraphics[width=0.4\linewidth]{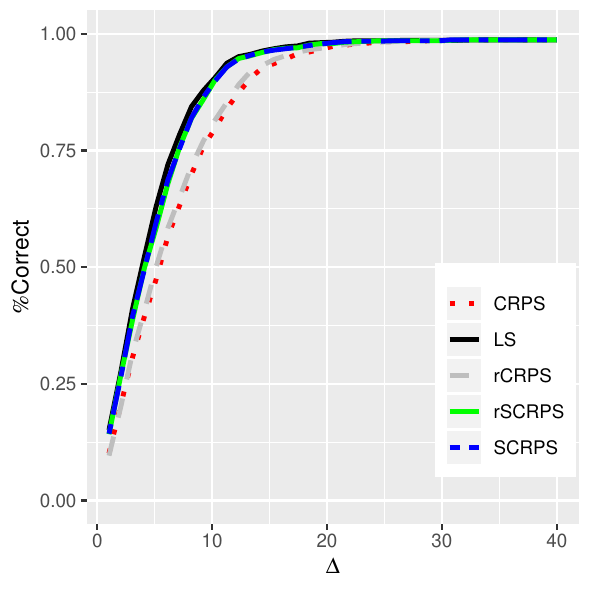}
	\includegraphics[width=0.4\linewidth]{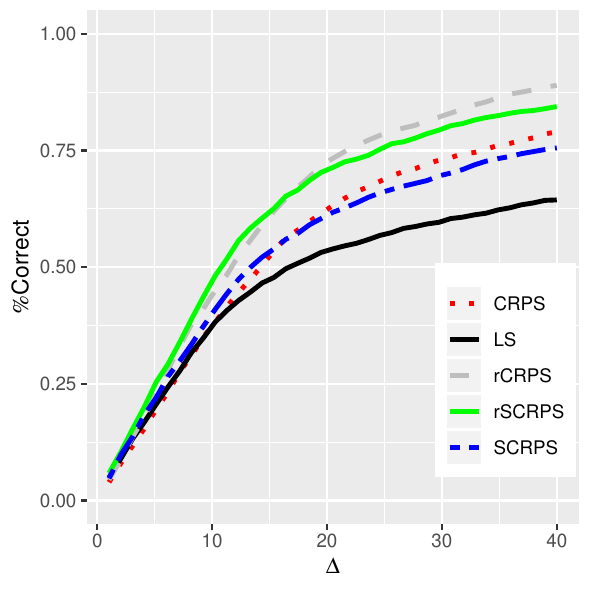}
	\includegraphics[width=0.4\linewidth]{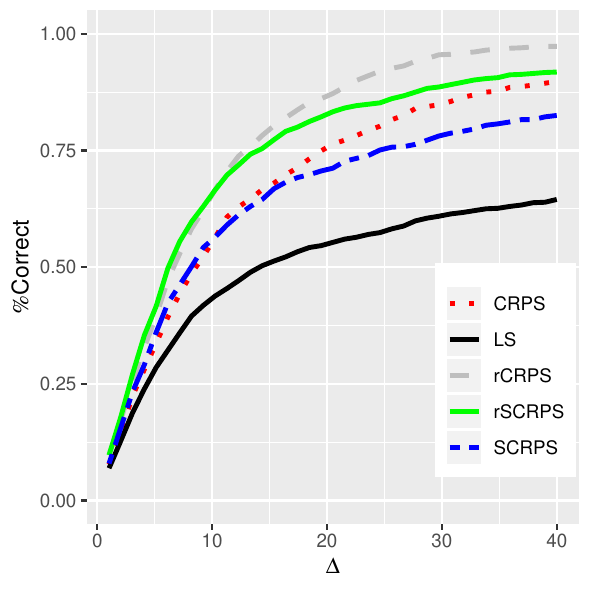}
	\vspace{-0.4cm}
	\caption{\changed{Percentages of times the correct model,with $\kappa = 50$, is selected} instead of models with $\kappa = 50 \pm \Delta$ for different scoring rules. The results are based on $n= 100$ observations and $2000$ simulations to the left, and $n = 200$ observations and $1000$ simulations to the right. In the top row, the data does not have any outliers, whereas it has one outlier in the bottom row. }
	\label{fig:spatialKappa}
\end{figure}

\subsection{Negative binomial regression}
\label{sec:NBR}
As a final application, we consider a negative-binomial regression model from an application in Space Syntax research \citep{hillier1993}. The application is described in detail in \citep{spacesyntax2019}. The data we consider consist of daily counts of the number of pedestrians that walked on 227 different street segments in Stockholm, Sweden. The data can be explored in the web application \citep{stepflow}, and we refer to   \citep{spacesyntax2019} and \citep{Pont2019a} for details about how the data were collected.
The goal is to explain the number of pedestrians walking on a given street through covariates in a regression model. If such a model fits well, it could for example be used to predict the number of pedestrians in new neighborhoods that are planned to be built in the city. Since the observations are counts, a reasonable model is a negative-binomial regression model. We assume that the observed count at street segment $i$ has a negative binomial distribution, $Y_i \sim \mbox{nBin}(\mu_i, s)$, where $\mu_i$ is the expected value of $Y_i$ and $s>0$ is a dispersion parameter that controls the variance of $Y_i$, which is $\pV(Y_i) = \mu_i + \mu_i^2/s$.  The mean is modeled as 
\begin{equation}\label{eq:regression}
	\log(\mu_i) = \sum_{k=1}^K \theta_k X_{k,i},
\end{equation}
where $X_{k,i}$ is the value of the $k$:th covariate at street segment $i$, and $\theta_k$ is the corresponding regression coefficient. We have nine covariates: 1) The weekday the measurement was taken; 2) The number of schools within 500m walking distance to the street segment;  The number of public transport nodes 3)  on the street segment, and 4) within 500m walking distance to the street segment; the number of local markets such as shops and caf\'es 5) on the street segment, and 6) within 500m walking distance to the street segment; as well as three covariates that related to the centrality of the street in the street network and the density of the buildings around the street \citep[see][for further details]{spacesyntax2019,Pont2019a}. 

\begin{figure}[t]
	\centering
	\includegraphics[width=0.8\linewidth]{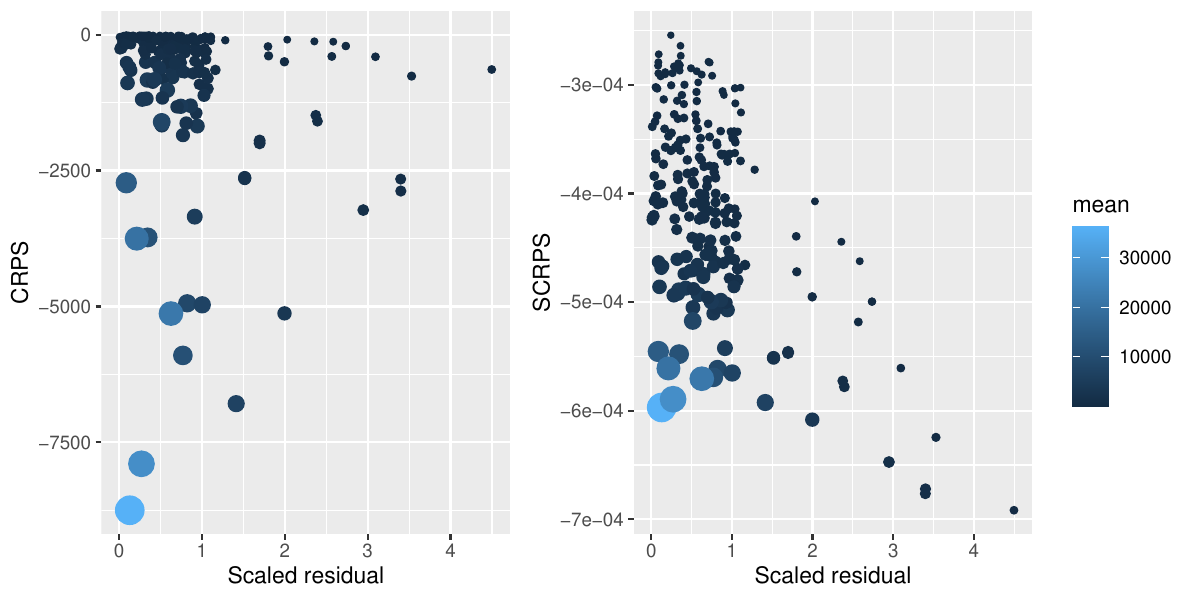}
	\includegraphics[width=0.8\linewidth]{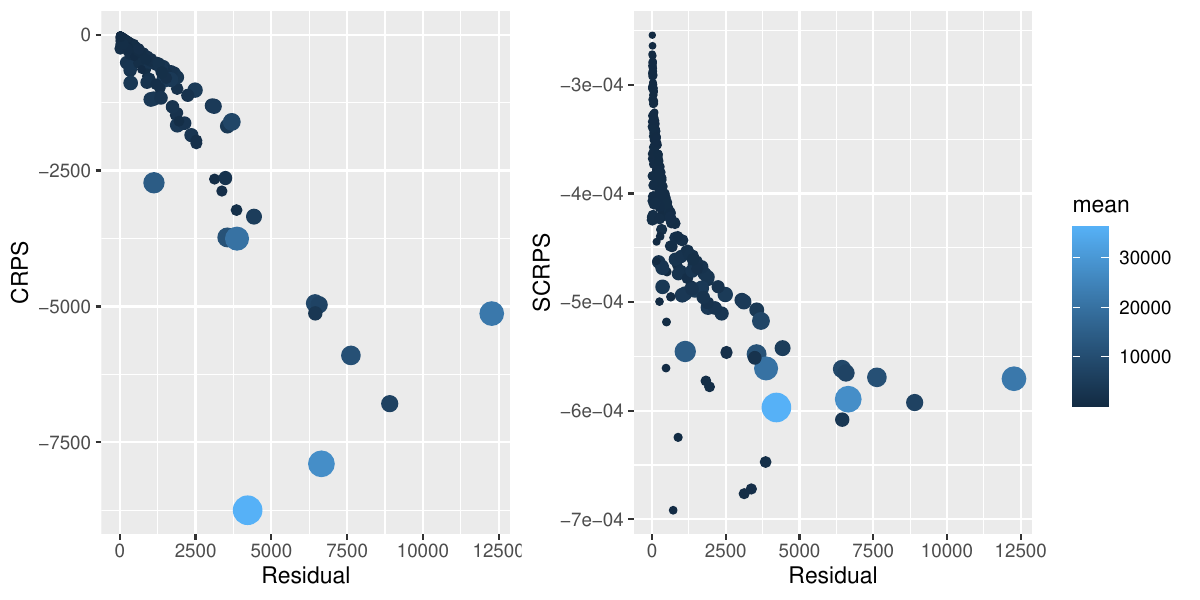}
	
	\caption{CRPS and SCRPS shown against scaled and unscaled residuals. The size and color of the points are determined by the expected value of the predictor.}
	\label{fig:pedestrian}
\end{figure}

We fit the model to the data using the R function \texttt{glm.nb} function from the MASS package \citep{MASSpackage}, and compute the CRPS and SCRPS value for each observation based on the model. 

In Figure \ref{fig:pedestrian}, the values of the scoring rules for each observation are plotted against the residuals, $e_i$ and the scaled residuals, $e^s_i$, defined as
\begin{align*}
	e_i =|y_i - \hat{\mu}_i|, \quad \mbox{and}\quad 
	e^s_i = \frac{e_i}{\hat{\sigma}_i} = \frac{|y_i - \hat{\mu}_i|}{\sqrt{\hat{\mu}_i+\hat{\mu}_i^2/\hat{s}}},
\end{align*}
where $\hat{s}$ is the estimated dispersion parameter and $\hat{\mu}_i$ is given by \eqref{eq:regression} with the estimated regression parameters. In the figure, the size and color of each observation is determined by $\hat{\mu}_i$, and one can note that the large (in absolute value) CRPS values mostly coincide with observations that have large residuals, while the large SCRPS values mainly occur for observations with large values of the standardized residuals.
For the residuals, one can see a linear tendency for the CRPS and a logarithmic tendency for the SCRPS. These are the, data free, entropy components of the scores -- $\log(E_{\P,\P}[|X-Y|])$ for SCRPS and $\E_{\P,\P}[|X-Y|]$ for CRPS-- which are given in detail in Appendix~\ref{sec:entropy}. This is a clear example of scale dependence, where the values for streets with high expected counts will be much more important than streets with low counts for the average CRPS whereas they will have a more even importance for SCRPS. 

Obviously one should be very careful with using average CRPS in this case, since a single observation might drastically change the average score. To illustrate this, we compute the average CRPS of the $k$ observations with smallest values of $\mu_i$ and divide this number for each $k$ by the average CRPS for all observations.  The result is shown as a function of $k$ in Figure \ref{fig:pedestrian2}, where we also show the same values for the SCRPS. One can note that removing around 20 of the observations with the largest values of $\mu_i$ reduces the average CRPS by half, whereas the SCRPS is much less sensitive to the removal of observations.

\begin{figure}[t]
	\centering
	\includegraphics[width=0.6\linewidth]{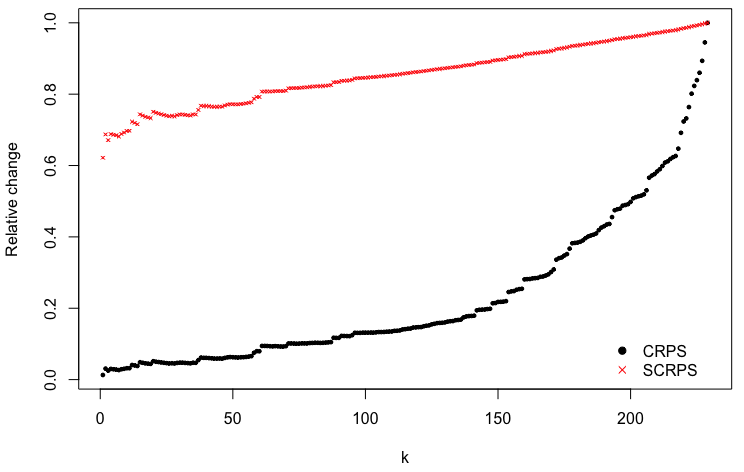}
	\vspace{-0.2cm}
	\caption{The average score of the $k$ smallest observations divided by the total average score.}
	\label{fig:pedestrian2}
\end{figure}

\section{Discussion}\label{sec:discussion}
We have illustrated how scoring rules such as the CRPS, \changed{the Hyv{\"a}rinen score,} the MSE, and the MAE are scale dependent and why this may lead to unintuitive model choices when used for forecast rankings and model selection. In such situations it may be better to use locally scale invariant scores such as the SCRPS from our proposed family of generalized proper  kernel scores, as we showed in three applications. 
An important property of the generalized proper kernel scores is that they are as easy to compute as the CRPS, and that they can be approximated through Monte Carlo integration in the same way as for the CRPS for models without analytic expressions for the scoring rules.  

An advantage with CRPS is that it allows for comparison with deterministic prediction models. This is not possible for SCRPS since it requires positive variance of the distributions to be finite. It further may cause problems for ensemble forecasts if the different forecasts happen to be equal (so that their empirical variance is zero). An alternative scoring rule that could be used in this case is a generalized kernel score with  $h(x)=-\frac{1}{2}\log(x+\gamma)$ for some $\gamma>0$ as $h$-function. The result of the scoring rule will then depended on the choice of the truncation parameter $\gamma$ which need to be set by the user, and we leave investigations of how to best do that for future research.

We defined the concept of local scale invariance in the case of location-scale transformations. This allowed for a relatively simple mathematical analysis of the problematic scale dependence that is encountered for the CRPS. This means that our definition of scale dependence was not strictly applicable in the final application with the negative-binomial distribution, since it does not have scale and location parameters. Nevertheless, we saw that the scaling issue remained the same. A natural extension is to extend the concept of local scale invariance to other classes of distributions, such as the negative binomial, that cannot be seen as location-scale transformations. 

Another issue that we have not addressed is that there is typically dependence between the predicted observations that are used in the average. How to take this dependence into account when comparing models is an interesting topic for future research, and we believe that scale dependence will be an important issue to consider also in that scenario. 

\changed{We introduced introduced the concept of robustness for scoring rules by requiring that $S(\P,y)$ remains bounded as $y\rightarrow\infty$ in order for the score to be robust. An interesting question for future research is whether it is possible to create a scoring rule that is both robust and locally scale invariant. Our conjecture is that this is not possible with our current notion of robustness. It would also be interesting to compare our notion of robustness with other classical definitions of robustness in the literature. As previously mentioned, proper scoring rules are often used for parameter estimation and for this scenario it will be interesting to compare our definition of robustness with the classical notion of B-robustness for M-estimators. It would also be interesting to  investigate the effect that local scale invariance has on parameter estimation in future work.}
%

%

%
\section*{Acknowledgements}
The authors would like to acknowledge \changed{the editors and the reviewers, as well as} Håvard Rue, Finn Lindgren, and Tilmann Gneiting for  helpful comments and suggestions that greatly improved the manuscript.

	\bibliographystyle{chicago}
	\bibliography{probPred}
	\begin{appendix}
		\section{Analytic expressions in the Gaussian case}\label{app:gauss}
For a Gaussian distribution we have 
$$
\Sker_{1}(\pN(\hat{\mu},\hat{\sigma}^2),y) = \frac{\hat{\sigma}}{\sqrt{\pi}} - 2\hat{\sigma}\varphi\left(\frac{y-\hat{\mu}}{\hat{\sigma}}\right) - (y-\hat{\mu})\left(2\Phi\left(\frac{y-\hat{\mu}}{\hat{\sigma}}\right) - 1\right),
$$
where $\varphi(x)$ and $\Phi(x)$ denotes the density function and cumulative distribution function of the standard normal distribution respectively \citep{gneiting2007strictly}. This follows directly from the definition of the score as a kernel score with kernel $g(x,y) = |x-y|$ in combination with the fact that 
\begin{equation}\label{eq:fnorm}
\pE_{\pN(\mu,\sigma^2)}[|X|] = 2\sigma\varphi\left(\frac{\mu}{\sigma}\right)+\mu\left(2\Phi\left(\frac{\mu}{\sigma}\right)-1\right).
\end{equation}
In a similar way, we can show the following proposition that provides expressions for the generalized kernel scores corresponding to the SCRPS, the robust CRPS, and the robust SCRPS. 

\begin{Proposition}\label{prop:gauss} In the case of Gaussian distributions, one has 
	\begin{align*}
	\Ssker_{1}(\pN(\hat{\mu},\hat{\sigma}^2),y) =&  -\sqrt{\pi}\varphi\left(\hat{z}\right)
	-\frac{\sqrt{\pi}\hat{z}}{2} \left(2\Phi\left(\hat{z}\right)-1\right) 
	- \frac1{2}\log\left(\frac{2\hat{\sigma}}{\sqrt{\pi}}\right),\\
	\Sker_{1,c}(\pN(\hat{\mu},\hat{\sigma}^2),y) =& \frac1{2}E(0,\sqrt{2}\hat{\sigma},c) - E(\hat{\mu}-y,\hat{\sigma},c),\\
	\Ssker_{1,c}(\pN(\hat{\mu},\hat{\sigma}^2),y) =& -\frac{E(\hat{\mu}-y,\hat{\sigma},c)}{E(0,\sqrt{2}\hat{\sigma},c)} - \frac1{2}\log(E(0,\sqrt{2}\hat{\sigma},c)),
	\end{align*}
	where $\hat{z} = \frac{\hat{\mu}-y}{\hat{\sigma}}$ and 
	\begin{align*}
	E(\mu,\sigma,c) =&  -\mu+\sigma\left(2\varphi\left(\frac{\mu}{\sigma}\right)-\varphi\left(\frac{c-\mu}{\sigma}\right)-\varphi\left(\frac{c+\mu}{\sigma}\right)\right) \\
	&+ (c-\mu)\Phi\left(\frac{\mu-c}{\sigma}\right)  + 2\mu\Phi\left(\frac{\mu}{\sigma}\right) + (\mu+c)\Phi\left(\frac{-c-\mu}{\sigma}\right).
	\end{align*}
\end{Proposition}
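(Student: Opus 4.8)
The plan is to treat all three identities as direct substitutions into the respective definitions of the scores. Each of $\Ssker_1$, $\Sker_{1,c}$ and $\Ssker_{1,c}$ is built from just the two expectations $\E_{\P,\P}[g(X,Y)]$ and $\E_{\P}[g(X,y)]$ (cf. \eqref{eq:kernel_score} and the definition of $\Ssker_g$), so the whole task reduces to evaluating these for $\P=\pN(\hat\mu,\hat\sigma^2)$. The key simplification is that if $X,Y$ are independent $\pN(\hat\mu,\hat\sigma^2)$ variables then $X-Y\sim\pN(0,2\hat\sigma^2)$ and $X-y\sim\pN(\hat\mu-y,\hat\sigma^2)$; since every kernel here depends on $(x,y)$ only through $|x-y|$, both expectations become expectations of a function of $|W|$ for a single Gaussian $W$.

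First I would dispose of the SCRPS identity, which is immediate. For $g(x,y)=|x-y|$, formula \eqref{eq:fnorm} gives $\E_{\P,\P}[|X-Y|]=2\hat\sigma/\sqrt\pi$ (apply \eqref{eq:fnorm} with mean $0$ and standard deviation $\sqrt2\,\hat\sigma$) and $\E_{\P}[|X-y|]=2\hat\sigma\varphi(\hat z)+\hat\sigma\hat z(2\Phi(\hat z)-1)$ with $\hat z=(\hat\mu-y)/\hat\sigma$. Substituting these into the definition of $\Ssker_1$ and simplifying the ratio $\E_{\P}[|X-y|]/\E_{\P,\P}[|X-Y|]$ yields the stated expression (up to the fixed additive constant).

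The substantive part is the robust case, where the single ingredient is the closed form of
\[
E(\mu,\sigma,c)=\E_{W\sim\pN(\mu,\sigma^2)}\!\left[\min(|W|,c)\right],
\]
since the truncated kernel satisfies $g_c(x,y)=\min(|x-y|,c)$, so that $\E_{\P,\P}[g_c(X,Y)]=E(0,\sqrt2\,\hat\sigma,c)$ and $\E_{\P}[g_c(X,y)]=E(\hat\mu-y,\hat\sigma,c)$. To compute $E$ I would split
\[
E(\mu,\sigma,c)=\int_{|w|\le c}|w|\,\phi_{\mu,\sigma}(w)\,dw + c\,\P(|W|>c),
\]
with $\phi_{\mu,\sigma}$ the $\pN(\mu,\sigma^2)$ density. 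The tail term equals $c\bigl(\Phi(\tfrac{-c-\mu}{\sigma})+1-\Phi(\tfrac{c-\mu}{\sigma})\bigr)$, and the bounded part is evaluated by splitting $|w|$ at the origin and applying the partial-expectation identity $\int_a^b w\,\phi_{\mu,\sigma}(w)\,dw=\mu[\Phi(\tfrac{b-\mu}{\sigma})-\Phi(\tfrac{a-\mu}{\sigma})]-\sigma[\varphi(\tfrac{b-\mu}{\sigma})-\varphi(\tfrac{a-\mu}{\sigma})]$ on $[-c,0]$ and $[0,c]$. Collecting terms and using $\Phi(-x)=1-\Phi(x)$ together with the evenness of $\varphi$ to combine the contributions at the five argument points $\pm\mu/\sigma,(c-\mu)/\sigma,(c+\mu)/\sigma$ produces the stated formula for $E$; as a consistency check, letting $c\to\infty$ should recover $\E[|W|]$ from \eqref{eq:fnorm}. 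I expect this bookkeeping of the $\varphi$- and $\Phi$-terms to be the only real obstacle, rather than any conceptual difficulty.

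Finally, with $E$ in hand the last two displays follow by substitution: from \eqref{eq:kernel_score}, $\Sker_{1,c}(\P,y)=\tfrac12\E_{\P,\P}[g_c(X,Y)]-\E_{\P}[g_c(X,y)]=\tfrac12 E(0,\sqrt2\,\hat\sigma,c)-E(\hat\mu-y,\hat\sigma,c)$, and the robust SCRPS identity follows identically after inserting $E(0,\sqrt2\,\hat\sigma,c)$ and $E(\hat\mu-y,\hat\sigma,c)$ into the definition of $\Ssker_g$ with $g=g_c$.
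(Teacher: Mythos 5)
Your proposal is correct and follows essentially the same route as the paper's proof: the SCRPS formula by direct substitution of the Gaussian absolute moment \eqref{eq:fnorm} into the definition (the paper's stated formula likewise drops the fixed additive constant), and the robust scores by reducing everything to $E(\mu,\sigma,c)=\pE[\min(|W|,c)]$, split into the tail term $c\,\pP(|W|>c)$ plus $\int_{-c}^{c}|w|\,\phi_{\mu,\sigma}(w)\,dw$ evaluated with the Gaussian partial-expectation identity on $[-c,0]$ and $[0,c]$. The only difference is cosmetic bookkeeping with $\Phi(-x)=1-\Phi(x)$, which reconciles your form of $E$ with the one displayed in the proposition.
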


\begin{proof}
	Using the definition of $\Ssker_{1}$ in \eqref{eq:scaled_score}, with $g(x,y) = |x-y|$, together with \eqref{eq:fnorm} directly gives the desired formula for $\Ssker_{1}$.
	To get the result for the robust scores, note that if $X\sim\pN(\mu,\sigma^2)$, then 
	\begin{equation}\label{eq:normint}
	\begin{split}
	\int_a^b x\pi_X(x) dx =& \sigma\left(\varphi\left(\frac{a-\mu}{\sigma}\right)-\varphi\left(\frac{b-\mu}{\sigma}\right)\right) 
	+ \mu\left(\Phi\left(\frac{b-\mu}{\sigma}\right)-\Phi\left(\frac{a-\mu}{\sigma}\right)\right).
\end{split}
	\end{equation}
	Further, with $g_c(x) = 1(|x|<c)|x| + c1(|x|\geq c)$ we have 
	\begin{align*}
	E(\mu,\sigma,c) := \pE[g_c(X)] &= c(\pP(X\leq -c) + \pP(X\geq c)) + \int_{-c}^c|x|\pi_X(x)dx
	 := cA + B.
	\end{align*}
	Here
	$A = \Phi\left(\frac{-c-\mu}{\sigma}\right)+\Phi\left(\frac{\mu-c}{\sigma}\right)$,
	and using \eqref{eq:normint} we get
	\begin{align*}
	B =  \int_{0}^c x\pi_X(x)dx - \int_{-c}^0 x\pi_X(x)dx =&  \sigma\left(2\varphi\left(\frac{-\mu}{\sigma}\right)-\varphi\left(\frac{c-\mu}{\sigma}\right)-\varphi\left(\frac{-c-\mu}{\sigma}\right)\right) \\
	&+ \mu\left(\Phi\left(\frac{c-\mu}{\sigma}\right)+\Phi\left(\frac{-c-\mu}{\sigma}\right)-2\Phi\left(\frac{-\mu}{\sigma}\right)\right).
	\end{align*}
	Thus, 
	\begin{align*}
	cA + B =& \mu+\sigma\left(2\varphi\left(\frac{-\mu}{\sigma}\right)-\varphi\left(\frac{c-\mu}{\sigma}\right)-\varphi\left(\frac{-c-\mu}{\sigma}\right)\right) \\
	&+ (c-\mu)\Phi\left(\frac{\mu-c}{\sigma}\right) 
	- 2\mu\Phi\left(\frac{-\mu}{\sigma}\right) + (\mu+c)\Phi\left(\frac{-c-\mu}{\sigma}\right).
	\end{align*}
	Now, with $X\sim\pN(0,2\hat{\sigma}^2)$ and $X_y\sim \pN(\hat{\mu}-y,\hat{\sigma}^2)$, we have 
	\begin{align*}
	\Sker_{1,c}(\pN(\hat{\mu},\hat{\sigma}^2),y) &=\frac1{2}\pE[g_c(X_y)] - \frac1{2}\pE[g_c(X)]   
	= \frac1{2}E(0,\sqrt{2}\hat{\sigma},c)-E(\hat{\mu}-y,\hat{\sigma},c),\\
	\Ssker_{1,c}(\pN(\hat{\mu},\hat{\sigma}^2),y) &= -\frac{\pE[g_c(X_y)]}{\pE[g_c(X)]} - \frac1{2}\log(\pE[g_c(X)]) \\
	&= 
	-\frac{E(\hat{\mu}-y,\hat{\sigma},c)}{E(0,\sqrt{2}\hat{\sigma},c)} - \frac1{2}\log(E(0,\sqrt{2}\hat{\sigma},c)).
	\end{align*}
\end{proof}

In Example~\ref{ex1}, we showed the expected value of the CRPS and SCRPS when $Y$ followed a Gaussian distribution. Those values were computed using the following proposition.

\begin{Proposition}\label{prop:Egauss}
	Let $\mu_d = \mu - \hat{\mu}$ and $\sigma_d^2 = \hat{\sigma}^2+\sigma^2$, then
	\begin{align*}
	\Sker_{1}(\pN(\hat{\mu},\hat{\sigma}^2),\pN(\mu,\sigma^2))
	=& \frac{\hat{\sigma}}{\sqrt{\pi}} -  2\sigma_d\varphi\left(\frac{\mu_d}{\sigma_d}\right)
	+\mu_d\left(1- 2
	\Phi\left(\frac{\mu_d}{\sigma_d}\right)\right),\\
	\Ssker_{1}(\pN(\hat{\mu},\hat{\sigma}^2),\pN(\mu,\sigma^2)) =&
	-\frac{\sqrt{\pi}}{\hat{\sigma}}\left(\sigma_d\varphi\left(\frac{\mu_d}{\sigma_d}\right) - \frac{\mu_d}{2}  + \mu_d\Phi\left(\frac{\mu_d}{\sigma_d}\right) \right)
	- \frac1{2}\log\left(\frac{2\hat{\sigma}}{\sqrt{\pi}}\right),\\
	\Sker_{1,c}(\pN(\hat{\mu},\hat{\sigma}^2),\pN(\mu,\sigma^2)) =&
	\frac1{2}E(0,\sqrt{2}\hat{\sigma},c) - E(\mu_d,\sigma_d,c),\\
	\Ssker_{1,c}(\pN(\hat{\mu},\hat{\sigma}^2),\pN(\mu,\sigma^2)) =&
	-\frac{E(\mu_d,\sigma_d,c)}{E(0,\sqrt{2}\hat{\sigma},c)} - \frac1{2}\log(E(0,\sqrt{2}\hat{\sigma},c)).
	\end{align*}
	
\end{Proposition}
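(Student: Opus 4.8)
The plan is to observe that $S(\P,\Q)=\E_\Q[S(\P,y)]$ and that each of the four scores in Proposition~\ref{prop:gauss} is \emph{affine} in the single data-dependent quantity $\E_\P[g(X,y)]$, the remaining ``entropy'' part depending on $\P$ alone through $\E_{\P,\P}[g(X,Y)]$. Hence pushing the expectation $\E_\Q$ through the pointwise expressions of Proposition~\ref{prop:gauss} leaves the self term $\E_{\P,\P}[g(X,Y)]$ untouched and merely replaces $\E_\P[g(X,y)]$ by the cross term $\E_{\P,\Q}[g(X,Y)]$, where $X\sim\P$ and $Y\sim\Q$ are independent. So the entire proposition reduces to evaluating this one cross moment for the two kernels $g(x,y)=|x-y|$ and $g=g_c$; the self moments are already recorded in Proposition~\ref{prop:gauss}, and the final assembly is literally the same computation as there with $y$ integrated out.

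Both moments are one-dimensional Gaussian integrals, since $X-Y$ is Gaussian: under $\P\times\P$ we have $X-Y\sim\pN(0,2\hat\sigma^2)$, while under $\P\times\Q$ we have $X-Y\sim\pN(\hat\mu-\mu,\hat\sigma^2+\sigma^2)=\pN(-\mu_d,\sigma_d^2)$ with $\mu_d=\mu-\hat\mu$ and $\sigma_d^2=\hat\sigma^2+\sigma^2$. For the CRPS kernel I would apply \eqref{eq:fnorm} to $W\sim\pN(-\mu_d,\sigma_d^2)$, using that $\varphi$ is even and $\Phi(-x)=1-\Phi(x)$ to rewrite the mean $-\mu_d$ in terms of $\mu_d$, obtaining $\E_{\P,\Q}[|X-Y|]=2\sigma_d\varphi(\mu_d/\sigma_d)-\mu_d+2\mu_d\Phi(\mu_d/\sigma_d)$. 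Subtracting this from $\tfrac12\E_{\P,\P}[|X-Y|]=\hat\sigma/\sqrt\pi$ gives the stated $\Sker_1$, while dividing it by $\E_{\P,\P}[|X-Y|]=2\hat\sigma/\sqrt\pi$ and adding $-\tfrac12\log(2\hat\sigma/\sqrt\pi)$ gives the stated $\Ssker_1$.

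For the truncated kernel I would reuse the function $E(m,s,c)=\E_{\pN(m,s^2)}[\min(|X|,c)]$ already evaluated in Proposition~\ref{prop:gauss}. Applied to the law of $X-Y$, the cross term is $\E_{\P,\Q}[g_c(X,Y)]=E(-\mu_d,\sigma_d,c)=E(\mu_d,\sigma_d,c)$, the last equality because $x\mapsto\min(|x|,c)$ is even so $E$ is even in its first argument, and the self term is $E(0,\sqrt2\,\hat\sigma,c)$ (it depends only on $\P$). Substituting these into $\tfrac12\E_{\P,\P}[g_c]-\E_{\P,\Q}[g_c]$ and into $-\E_{\P,\Q}[g_c]/\E_{\P,\P}[g_c]-\tfrac12\log\E_{\P,\P}[g_c]$ yields the stated $\Sker_{1,c}$ and $\Ssker_{1,c}$. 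The computation is essentially routine, so I do not anticipate a genuine obstacle; the only two points needing care are the structural observation that $\E_\Q$ touches each score only through its linear data term (which legitimizes substituting the cross moment rather than re-deriving each score), and the sign bookkeeping for the mean $-\mu_d$ of $X-Y$, where evenness of the kernels and $\Phi(-x)=1-\Phi(x)$ must be invoked to present the answers in the stated $\mu_d$-form.
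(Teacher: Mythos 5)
Your proof is correct, and it takes a cleaner route than the paper's. The paper's own proof starts from the pointwise formulas of Proposition~\ref{prop:gauss} (functions of $\hat z=(\hat\mu-y)/\hat\sigma$) and integrates them term by term against $\Q=\pN(\mu,\sigma^2)$, which requires the auxiliary Gaussian identities for $\pE[\varphi(\tilde Y)]$, $\pE[\Phi(\tilde Y)]$ and $\pE[\tilde Y\Phi(\tilde Y)]$ with $\tilde Y$ normal, plus, for the robust scores, a separate verification that $\pE_{\pN(\mu,\sigma^2)}[E(\hat\mu-y,\hat\sigma,c)]=E(\mu_d,\sigma_d,c)$ via three further auxiliary normal variables. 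You instead exploit the affine dependence of all four scores on the single quantity $\E_{\P}[g(X,y)]$, so that applying $\E_{\Q}$ (with Fubini, justified by finiteness of the relevant first moments) leaves the entropy term untouched and only requires the cross moment $\E_{\P,\Q}[g(X,Y)]$; since $X-Y\sim\pN(-\mu_d,\sigma_d^2)$ under $\P\times\Q$, this is the same one-dimensional Gaussian integral already evaluated for the pointwise scores, and the $\mu_d$-form follows from evenness of the kernels together with $\Phi(-x)=1-\Phi(x)$. Your route buys a shorter argument and makes the structure transparent, while the paper's route has the mild advantage of reusing the displayed pointwise formulas verbatim; the results agree. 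One further point in your favour: your derivation gives the entropy term of the robust scores as $E(0,\sqrt{2}\hat{\sigma},c)$, depending on $\P$ alone, which is consistent with Proposition~\ref{prop:gauss} and with the paper's own proof; the $E(0,\sqrt{2}\sigma,c)$ appearing in the statement of Proposition~\ref{prop:Egauss} is evidently a typo for $E(0,\sqrt{2}\hat{\sigma},c)$.
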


\begin{proof}
	Note that if $X\sim\pN(\mu,\sigma)$, then
	$$
	\pE[\varphi(X)] = \frac1{\sqrt{\sigma^2+1}}\varphi\left(\frac{\mu}{\sqrt{\sigma^2+1}}\right), \quad 
	\mbox{and}\quad\pE[\Phi(X)] = \Phi\left(\frac{\mu}{\sqrt{1+\sigma^2}}\right).
	$$
	It is also easy to show that $\pE[X\Phi(X)] = \mu\pE[\Phi(X)] + \sigma^2\pE[\varphi(X)]$. 
	Define $\tilde{\mu} = (\mu - \hat{\mu})/\hat{\sigma}$ and $\tilde{\sigma} = \sigma/\hat{\sigma}$, and let $\tilde{Y}\sim\pN(\tilde{\mu},\tilde{\sigma}^2)$, then
	$$
	\pE[\varphi(\tilde{Y})] = \frac{\hat{\sigma}}{\sigma_d}\varphi\left(\frac{\mu_d}{\sigma_d}\right), \quad \pE[\Phi(\tilde{Y})] = \Phi\left(\frac{\mu_d}{\sigma_d}\right).
	$$
	Thus, for the CRPS we have
	\begin{align*}
	\pE_{\pN(\mu,\sigma^2)}[\Sker_{1}(\pN(\hat{\mu},\hat{\sigma}^2),y) ] 
	&= 
	\hat{\sigma}\left(\frac1{\sqrt{\pi}} - 2\pE[\varphi(\tilde{Y})] - 2\pE[\tilde{Y}\Phi(\tilde{Y})] + \pE[\tilde{Y}]\right)\\
	&= 
	\hat{\sigma}\left(\frac1{\sqrt{\pi}} - 2(1+\tilde{\sigma}^2)\pE(\varphi(\tilde{Y})) - 2\tilde{\mu}\pE[\Phi(\tilde{Y})]  +\tilde{\mu}\right)\\
	&=
	\frac{\hat{\sigma}}{\sqrt{\pi}} - 2\sigma_d\varphi\left(\frac{\mu_d}{\sigma_d}\right) - 2\mu_d\Phi\left(\frac{\mu_d}{\sigma_d}\right)  +\mu_d.
	\end{align*}
	For the scaled CRPS, similar calculations give
	\begin{align*}
	\pE_{\pN(\mu,\sigma^2)}[\Ssker_{1}(\pN(\hat{\mu},\hat{\sigma}^2),y)] 
	&= 
	-\sqrt{\pi}\pE[\varphi(-\tilde{Y})] +\frac{\sqrt{\pi}}{2}(2\pE[\tilde{Y}\Phi(-\tilde{Y})]+\pE[\tilde{Y}])
	-  \frac1{2}\log\left(\frac{2\hat{\sigma}}{\sqrt{\pi}}\right)\\
	&= 
	-\sqrt{\pi}\pE[\varphi(\tilde{Y})] +\frac{\sqrt{\pi}}{2}\pE(\tilde{Y}) - \sqrt{\pi}\pE[\tilde{Y}\Phi(\tilde{Y})]
	-  \frac1{2}\log\left(\frac{2\hat{\sigma}}{\sqrt{\pi}}\right)\\
	&= 
	-\frac{\sqrt{\pi}}{\hat{\sigma}}\left(\sigma_d\varphi\left(\frac{\mu_d}{\sigma_d}\right) - \frac{\mu_d}{2}  + \mu_d\Phi\left(\frac{\mu_d}{\sigma_d}\right) \right)
	-  \frac1{2}\log\left(\frac{2\hat{\sigma}}{\sqrt{\pi}}\right).
	\end{align*}
	Finally, 
	to obtain the desired expressions for the robust scores, we just have to verify that 
	$
 \pE_{\pN(\mu,\sigma^2)}[E(\hat{\mu}-y,\hat{\sigma},c)] =  E(\mu_d,\sigma_d,c).
	$ 
	To that end, we define $\tilde{Y}_1\sim \pN(-(\mu_d-c)/\hat{\sigma},\tilde{\sigma}^2)$, $\tilde{Y}_2 \sim \pN(\mu_d/\hat{\sigma},\tilde{\sigma}^2)$, and $\tilde{Y}_3 \sim \pN((\mu_d-c)/\hat{\sigma},\tilde{\sigma}^2)$, and obtain
	\begin{align*}
	\pE_{\pN(\mu,\sigma^2)}[E(\hat{\mu}-y,\hat{\sigma},c)] =& -\mu_d 
	+ 2\hat{\sigma}(\pE[\varphi(\tilde{Y}_2])+ \pE[\tilde{Y}_2\Phi(\tilde{Y}_2)]) \\
	&\quad - \hat{\sigma}(\pE[\varphi(\tilde{Y}_1)] + \pE[\tilde{Y}_1\Phi(\tilde{Y}_1)]).
	\end{align*}
	Evaluating the expectations and simplifying gives the desired result.
\end{proof}

\section{Generalized entropy characterizations}\label{sec:entropy}
In this section, we want to highlight the importance of the function $H(\P)=S(\P,\P)$ and how it can be used to understand the behavior of the scoring rule $S$ in scenarios with varying uncertainty. The function $H(\P)$ can be seen as a measure of the variability of the probability measure $\P$, and is often referred to either as the uncertainty function or as the generalized entropy \citep{Dawid1998,gneiting2007strictly}.

It should be noted that $H(\P)$ only depends on the predictive model $\P$ and not on the observed data. Thus by choosing to use a certain scoring rule $S$, an implicit ordering of the possible measures is made through $H$ prior to observing any data. Hence, it is important to be aware that a choice is made and to understand how this affects the model selection. 

For the kernel scoring rules we have $H(\P) = 0.5\E_{\P,\P}\left[ g(X,Y)\right])$, whereas the generalized proper kernel scoring rules have $H(\P) = h(\E_{\P,\P}\left[ g(X,Y)\right])$. An advantage with the generalized proper kernel scoring rules is therefore that they, for each kernel $g$, provide a family of proper scoring rules with a wide range of generalized entropies determined by $h$. 
Let us now examine the generalized entropy for a set of measures with variable scale.
\begin{example}
	Consider a family of probability measures $\{\P_\sigma\}$ that differ only through their scaling parameter $\sigma$.
	For the kernel $g(x,y)=|x-y|^{\alpha}$ with $\alpha \in (0,2]$, the kernel scoring rule then has generalized entropy 
	$$
	H^{ker}(\P_{\sigma}) = \sigma^{\alpha} \E_{\P_{1},\P_{1}} \left[|X-Y|^{\alpha}\right].
	$$
	The corresponding standarized kernel scoring rule has generalized entropy
	$$
	H^{sta}(\P_{\sigma}) = \alpha \log(\sigma) + \log \left( \E_{\P_{1},\P_{1}} \left[|X-Y|^{\alpha}\right] \right).
	$$
	In a spatial setting like the one studied in Section~\ref{sec:spatstat}, $H^{ker}(\P_{\sigma})$ compared to $H^{sta}(\P_{\sigma})$ will therefore be more sensitive how the variance is chosen for observations at locations far from other locations, which typically have large values of $\sigma$, while being less sensitive for observations at locations close to other locations, which typically have small values of $\sigma$. Whether this is a desirable feature is something that needs to be decided by the person evaluating the forecasts when choosing which entropy function to use. 
\end{example}

Recently, it has been suggested to study the distribution of scoring rules for data sets rather than only considering the mean score \citep[see, e.g.,][]{naveau2018forecast,taillardat2019extreme}. That is, to consider the distribution of $S(\P_{\hat{\theta}_i},Y_i)$ when $\P_{\hat{\theta}_i}$ is the measure which predicts observation $Y_i$. When exploring the distribution we argue that it often makes sense to study the distribution of $H(\P_{\hat{\theta}_i})$ and $S(\P_{\hat{\theta}_i}, Y_i)- H(\P_{\hat{\theta}_i})$ separately. The distribution of the first term, $H(\P_{\hat{\theta}_i})$, provides no information about the fit of the model to the data but instead provides information about how much variability one expects the data to have a priori. The second term, $S(\P_{\hat{\theta}_i}, Y_i)- H(\P_{\hat{\theta}_i})$, on the other hand gives an indication about how close the model fits the data, where a zero value indicates a perfect calibration in the sense of the generalized entropy. 

Thinking of the two terms in a regression-analogy, the first term explores the variability of the covariates given the model in the defined entropy sense and it is importantly independent of the data. The second term, that uses the data, explores the difference between the observed score and the expected score if the predictive model was the true distribution.

It is important to keep these two terms in mind when exploring distributions of scores over predictions. Especially troublesome is to fail to notice that the variability of the first term, which often is substantial, is data independent. Ignoring this when exploring the distributions for evaluating a model fit to data may lead to incorrect conclusions.

\begin{figure}[!t]
	\begin{center}
		\begin{minipage}{0.25\linewidth}
			\begin{center}
				CRPS\\
				\includegraphics[width=\linewidth]{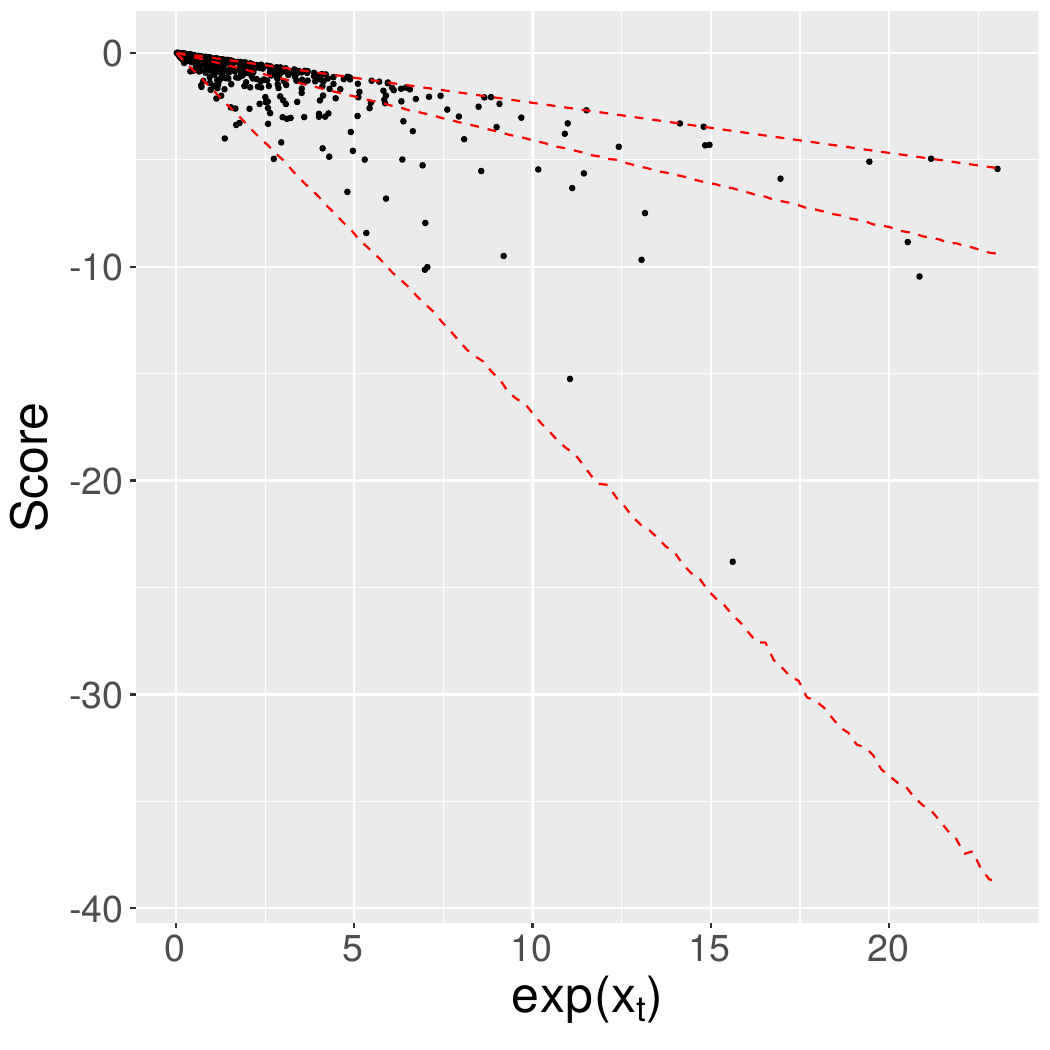}
				\includegraphics[width=\linewidth]{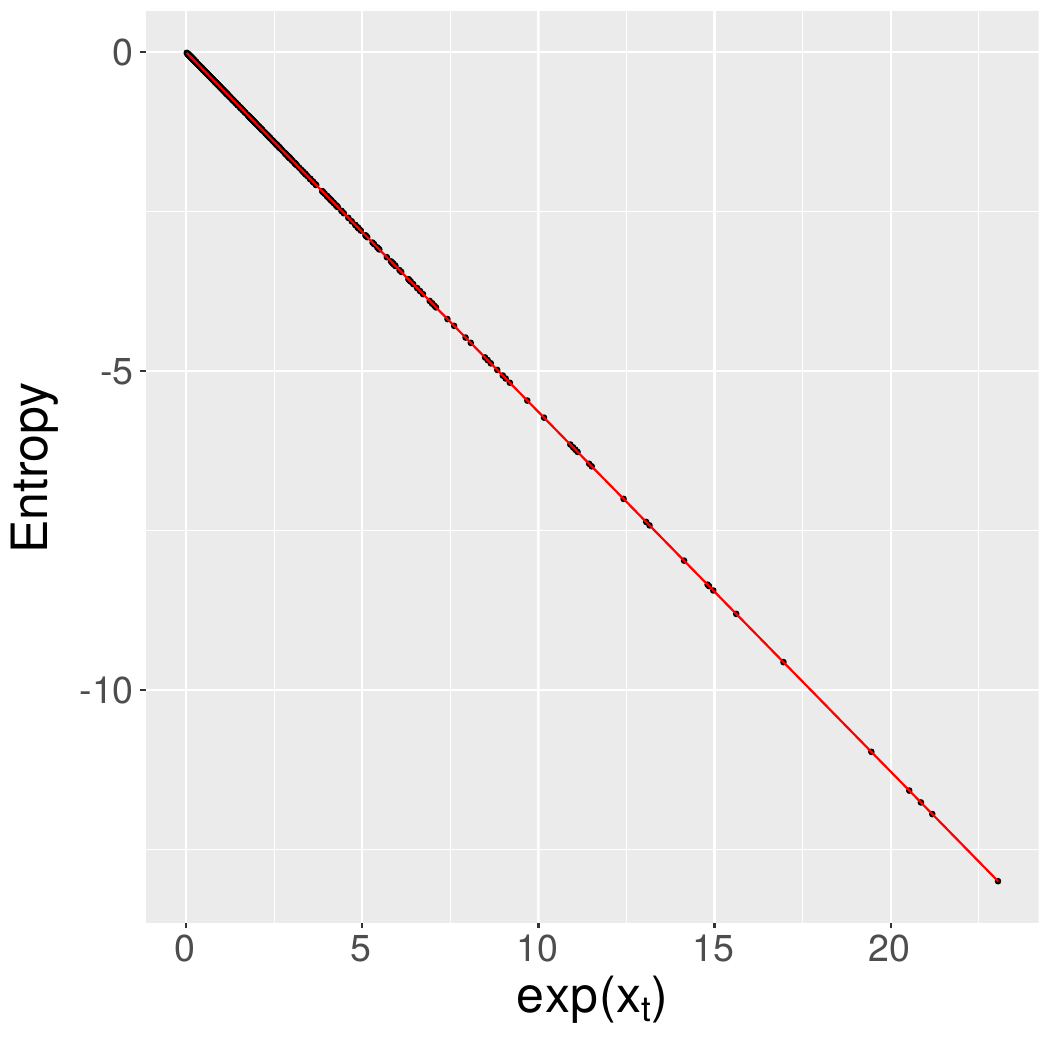}
				\includegraphics[width=\linewidth]{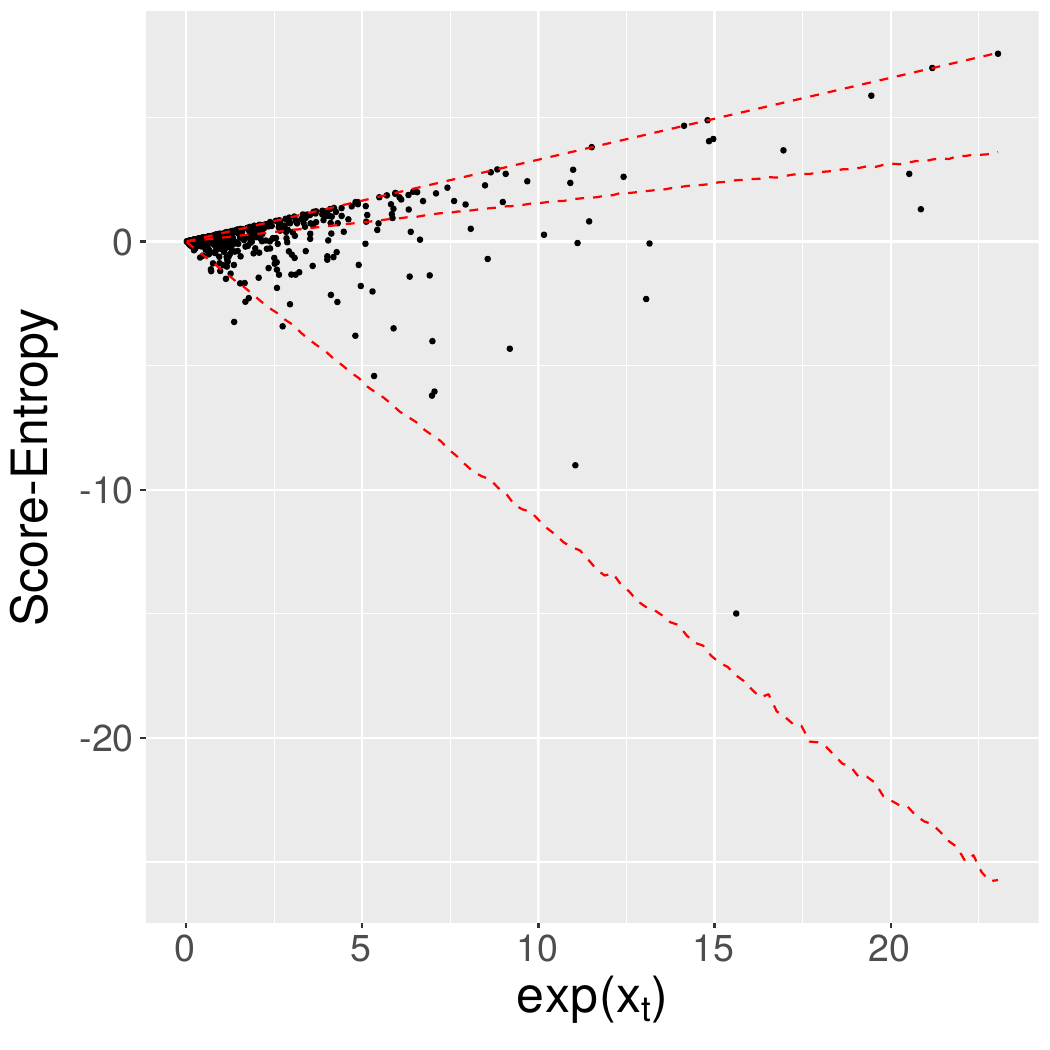}\\			
			\end{center}
		\end{minipage}
		\begin{minipage}{0.25\linewidth}
			\begin{center}
				SCRPS\\
				\includegraphics[width=\linewidth]{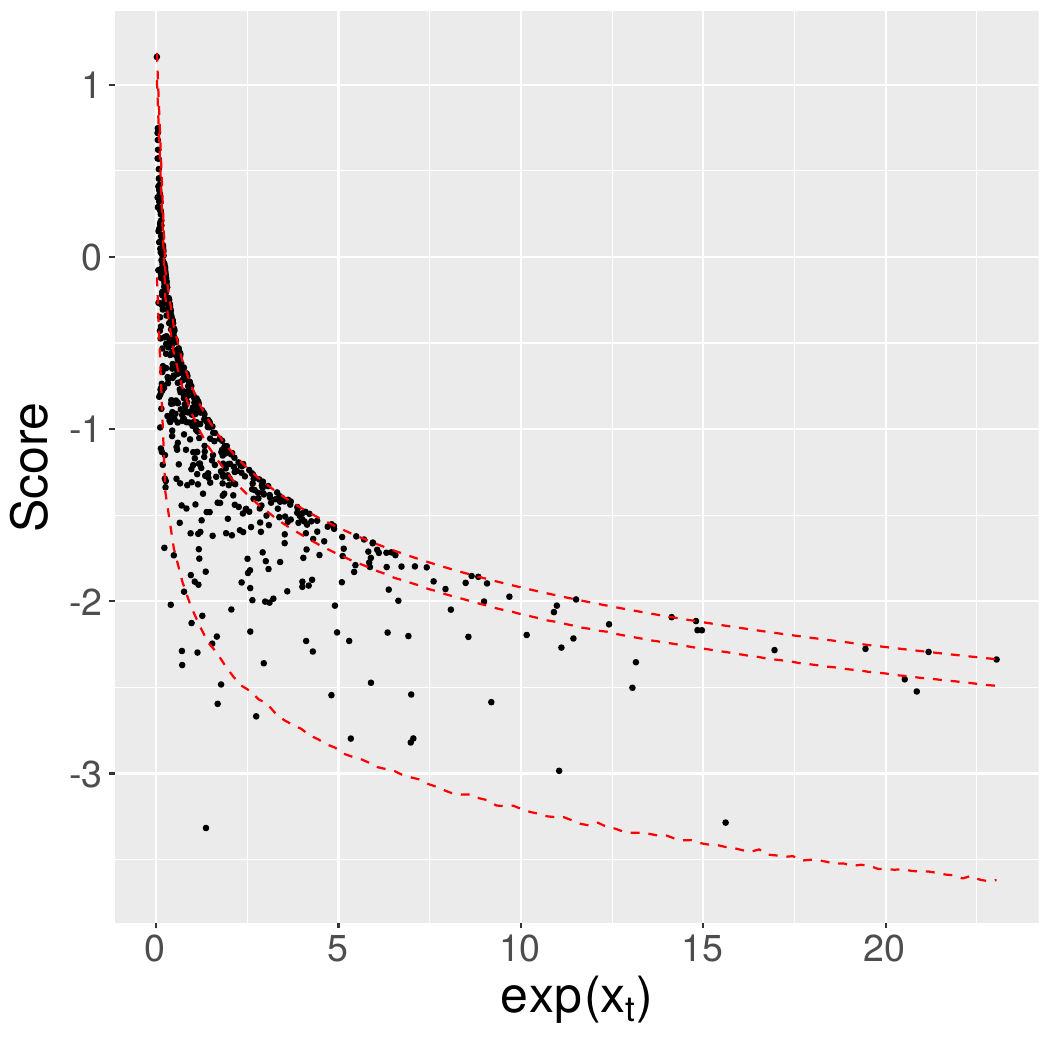}
				\includegraphics[width=\linewidth]{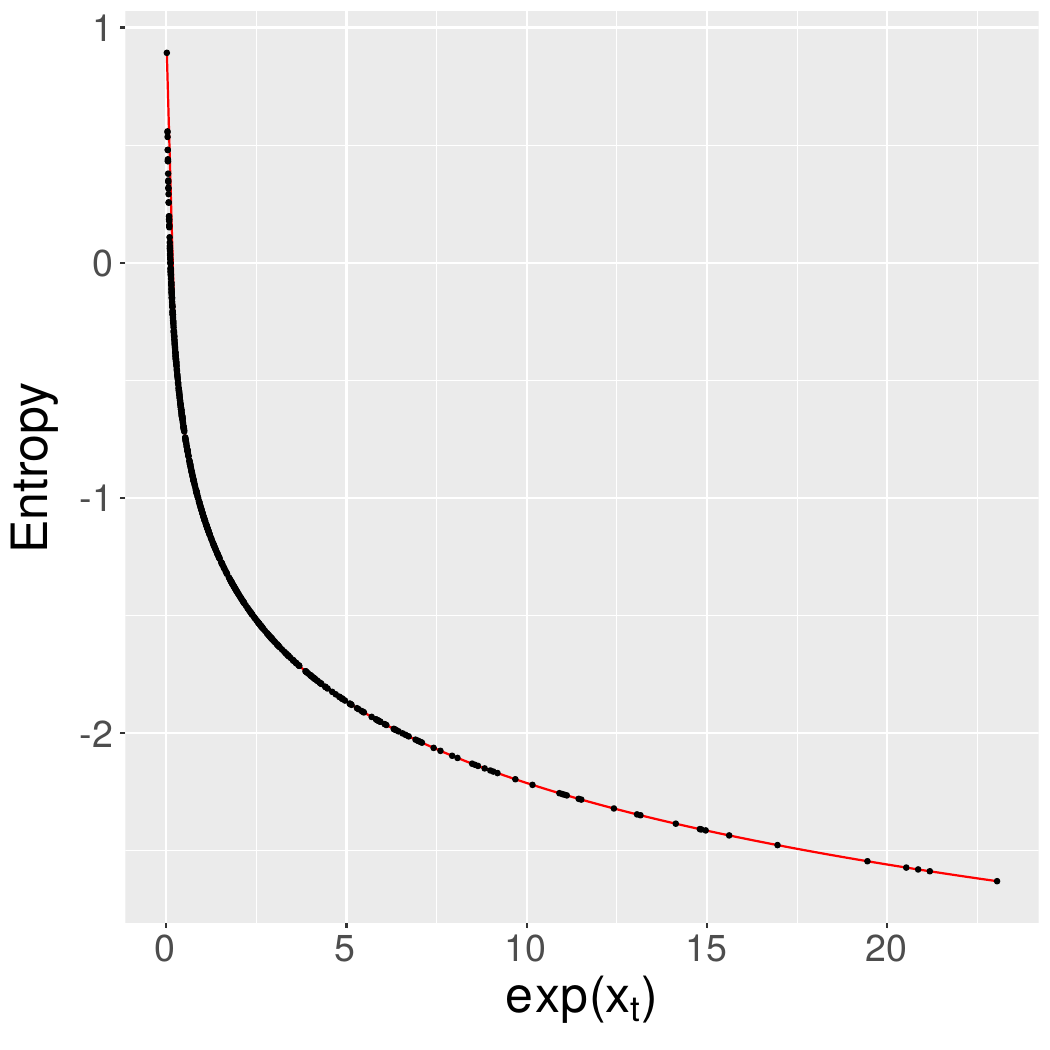}
				\includegraphics[width=\linewidth]{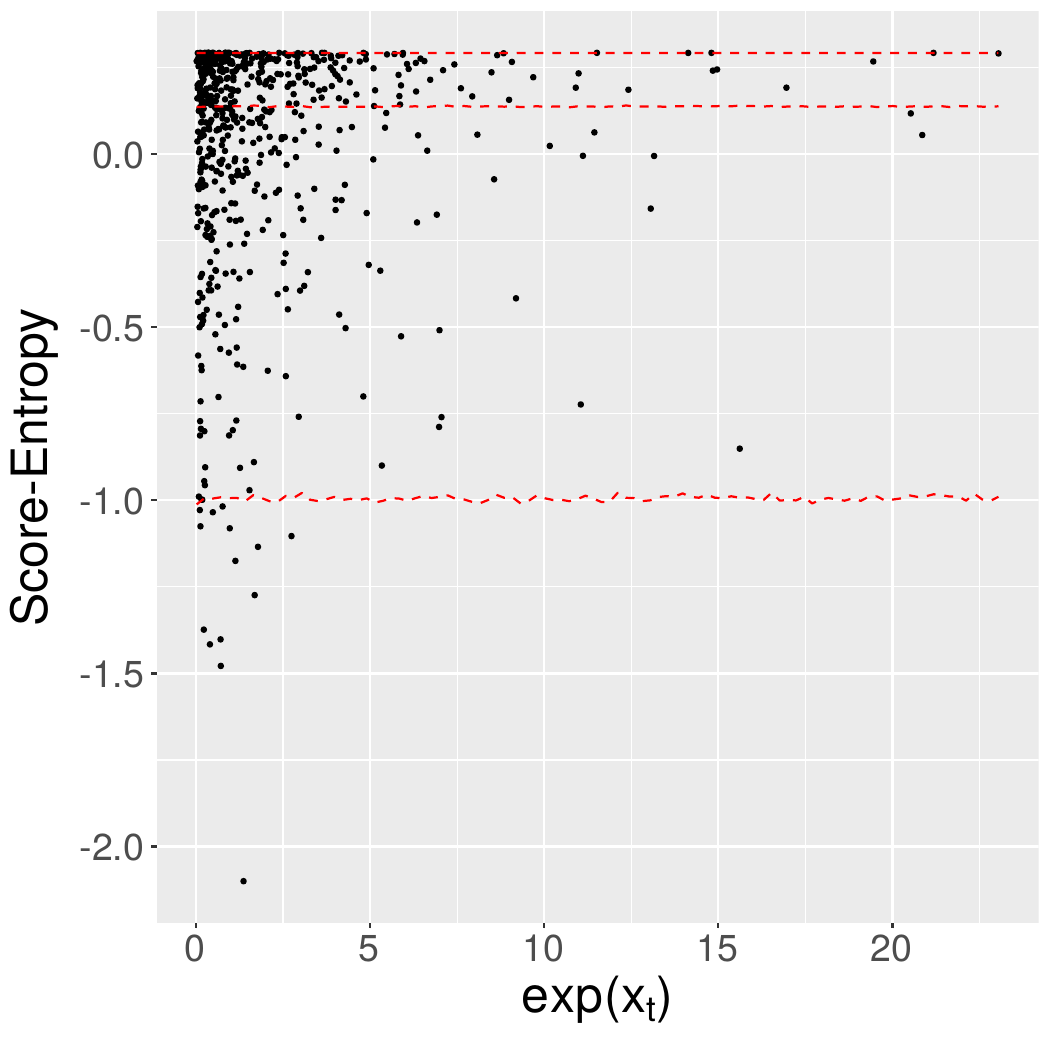}\\
			\end{center}
		\end{minipage}
		\begin{minipage}{0.25\linewidth}
			\begin{center}
				log score\\
				\includegraphics[width=\linewidth]{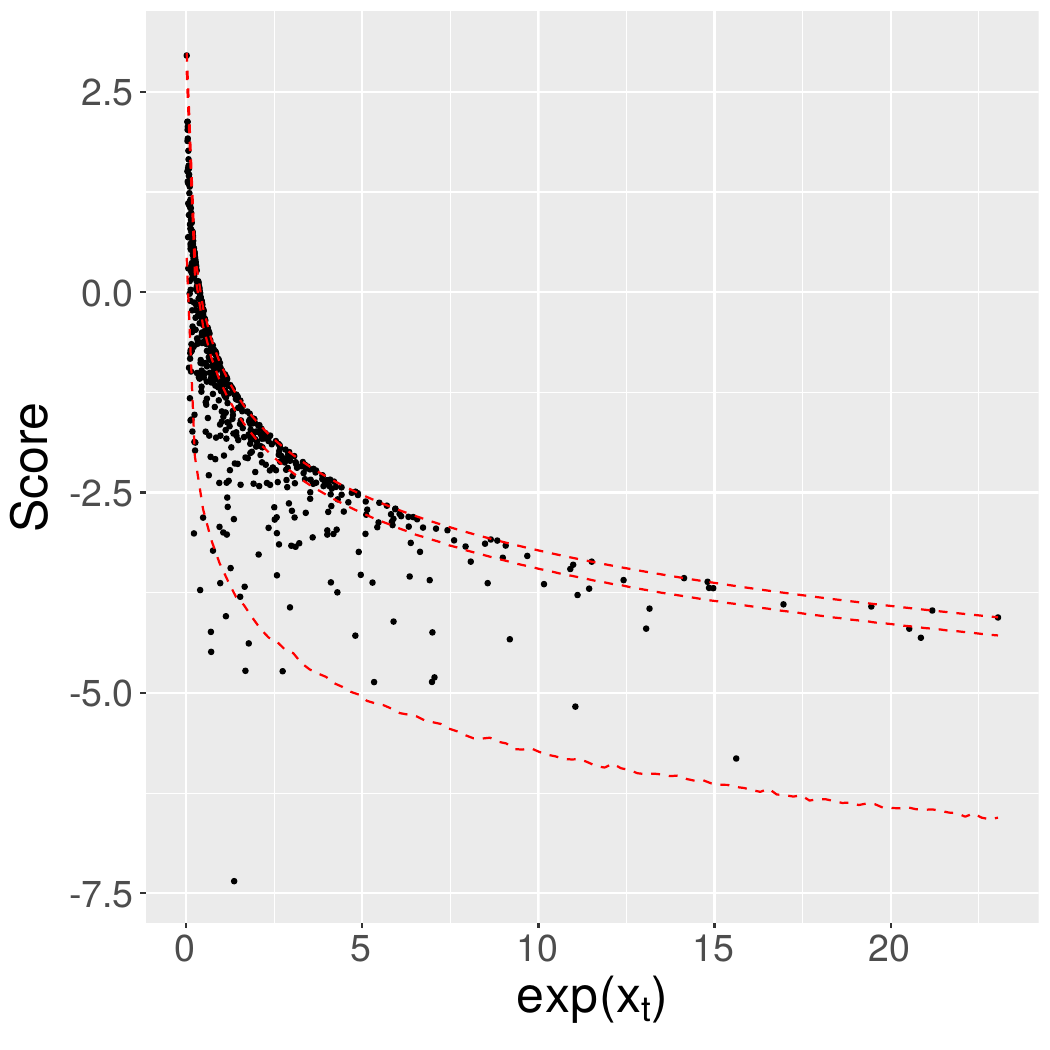}
				\includegraphics[width=\linewidth]{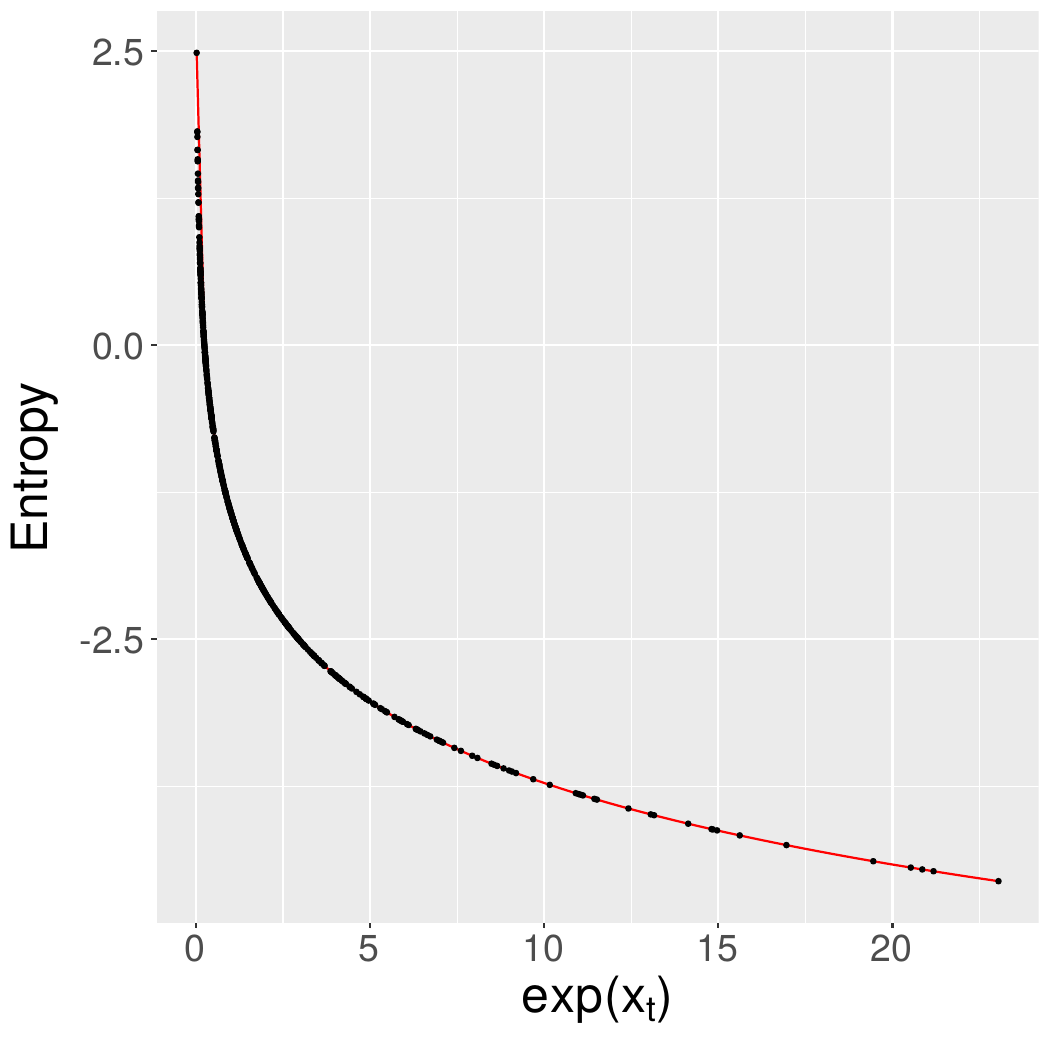}
				\includegraphics[width=\linewidth]{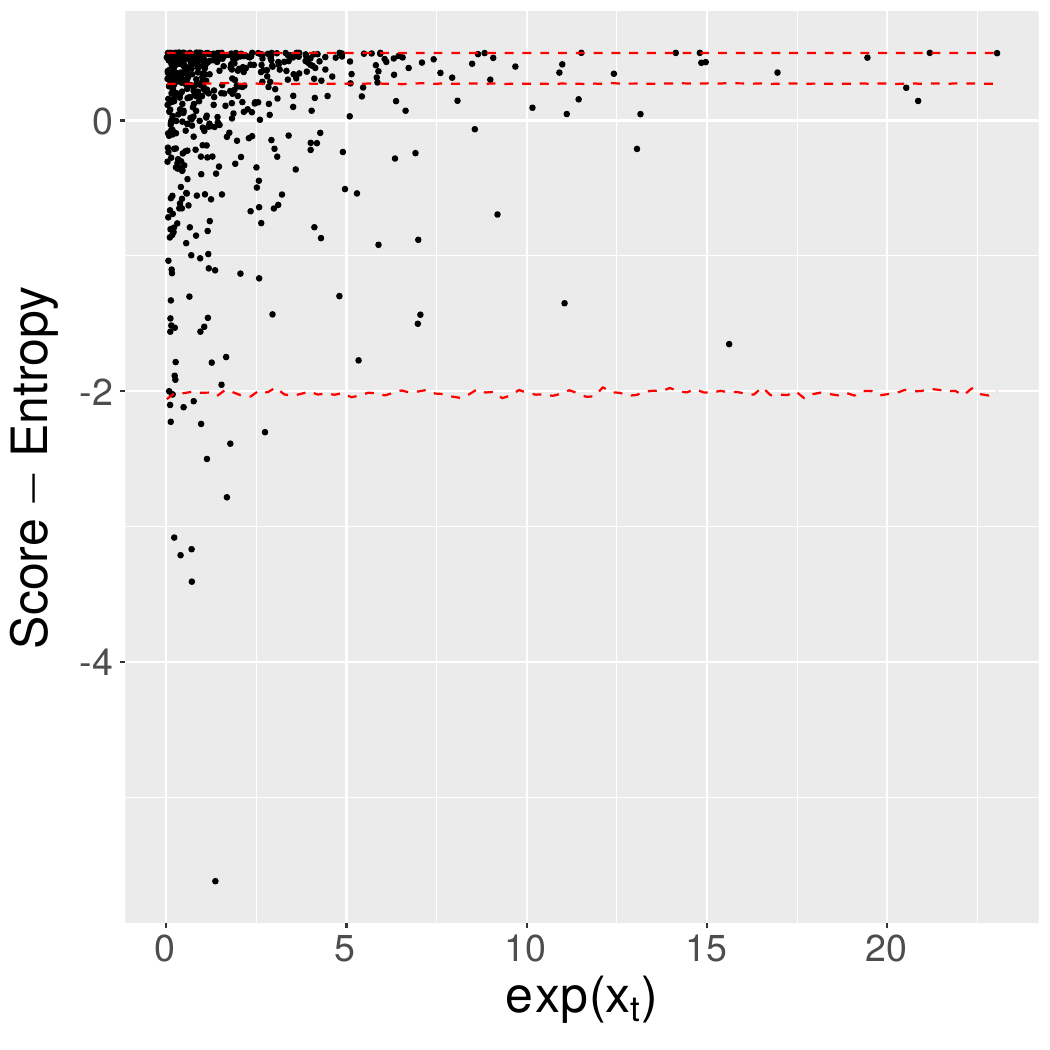}
			\end{center}
		\end{minipage}		
	\end{center}
	\caption{Three scoring rules for a stochastic volatility model. The black dots are the values for each observation.  The three rows show the scoring rule (top), the corresponding entropy with theoretical values as solid lines (middle), and score minus the entropy (bottom). The dashed red lines are estimated $2.5\%,50\%$ and $97.52.5\%$ quantiles. 
	}
	\label{fig:scoringRuleVol}
\end{figure}

As an example of the different terms, Figure \ref{fig:scoringRuleVol} shows the CRPS, the scaled CRPS, and the log-score for the observations in Figure  \ref{fig:SimVol}.  The middle row shows the entropy of the scoring rules, where one clearly sees the linear cost of increased variance for the CRPS and the logarithmic cost of increased variance for the SCRPS and the log-score.  The bottom row displays the difference between the score and the entropy. Here it is interesting to note that the variability of the term increases as a function of the volatility for CRPS, whereas the standardized score has the same distribution regardless of the variance. Of general interest is also that it is hard to see a difference between the SCRPS and the log-score.

\section{Further proofs}\label{sec:proofs}

\begin{proof}[Proof of Proposition~\ref{prop:scale_ls}]
	Let $S(\Q_\theta,y) = \log(q_\theta(Y))$ denote the log-score, where $q_\theta$ is the density of $\Q$, and recall that $S(\Q_\theta,\Q) = \E_\Q[\log(q_\theta(Y))]$.
	Using that $S$ is a proper scoring rule, we have that $\nabla_{\theta} S \left( \Q_{\theta  },\Q\right)|_{\Q = \Q_\theta} = {\bf 0}$.  Thus, by Taylor expansion 
	we get
	\begin{equation}\label{eq:taylor}
	S \left( \Q_{\theta+ t\sigma r},\Q_{\theta}\right) - S \left( \Q_{\theta},\Q_{\theta}\right) = t^2\sigma^2  r^T\nabla^2_{\theta} S \left( \Q_{\theta },\Q_{\theta}\right)  r + o(t^2).
	\end{equation}
	Here $s(\theta)=\nabla^2_{\theta} S \left( \Q_{\theta },\Q\right)|_{\Q = \Q_\theta}$ exists and is continuous by Assumption \ref{ass1}, and from classical results in likelihood theory \cite[see][]{lehmann1983theory} we get
	$$
	s(\theta) = \left. \nabla^2_{\theta} \E_{\Q}[\log q_{\theta}(X)] \right|_{\Q=\Q_{\theta}} =  \frac{1}{\sigma^2}H_{\Q},
	$$
	for some $2 \times 2$ matrix $H_{\Q}$ independent of $\theta$. Thus, 
	$
	s(\Q_\theta) = s(\theta).
	$
\end{proof}


\begin{Lemma}
	\label{lem1} Let $\alpha \in (0,2]$ and $c\in[0,\infty]$ and define $g_c(x,y)=\mathbb{I}(|x-y|<c)|x-y|^{\alpha}$. Let $\Q$ and $\P$ be probability measures  and $\E_{\P}[|X|^\alpha] <\infty$. Then if Assumption \ref{ass1} holds for $\P$, 
	\begin{align}
	\label{eq:grad1}
	\nabla_{\theta} \E_{\P_{\theta},\Q}[g_c(X,Y)] &= -
	\sigma^{-1}\E_{\P,\Q}\left[
	g_c(\sigma X + \mu,Y)  
	v(X)\right]), \\
	\label{eq:Hessian1}
	\nabla^2_{\theta} \E_{\P_{\theta},\Q}[g_c(X,Y)] =& 
	\sigma^{-2}\E_{\P,\Q}\left[
	g_c(\sigma X + \mu,Y) H(X) \right],
	\end{align}
	for $\sigma>0$ where $v(X) = (\Psi'(X), \Psi'(X)  X + 1)^T$ and
	\begin{align*}
	H(X) =&v(X)v(X)^T 
	+
	\Psi''(X)
	\begin{bmatrix}
	 1& X   \\
	X  &  X^2
	\end{bmatrix}
	+
	\begin{bmatrix}
	0 & \Psi'(X)   \\
	\Psi'(X)  &  2\Psi'(X) X +1
	\end{bmatrix} .
	\end{align*}
	Further, if we apply the gradient to both arguments of $\E_{\P_{\theta},\P_{\theta}}[g_c(X,Y)]$, we get 
		\begin{align}
		\label{eq:GradCross}
			\sigma^{\alpha-2}\E_{\P,\P}\left[
		g( X ,Y) \mathbb{I}\left(|X-Y|<\frac{c}{\sigma}\right)
		v(X)
		v(Y)^T
\right].
		\end{align}
\end{Lemma}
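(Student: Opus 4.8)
The plan is to move the entire $\theta$-dependence onto the density of $\P_\theta$ so that the (only mildly regular) kernel $g_c$ is never differentiated. Writing $p_\theta(u)=\sigma^{-1}\exp\!\big(\Psi((u-\mu)/\sigma)\big)$ for the density of $\P_\theta$, we have $\E_{\P_\theta,\Q}[g_c(X,Y)]=\int\!\!\int g_c(u,w)\,p_\theta(u)\,\mathrm{d}\Q(w)\,\mathrm{d}u$, and since $g_c$ does not depend on $\theta$ we may differentiate under the integral sign, differentiating only $p_\theta$. It is convenient to use the logarithmic derivatives $\nabla_\theta p_\theta=p_\theta\nabla_\theta\log p_\theta$ and $\nabla^2_\theta p_\theta=p_\theta\big(\nabla^2_\theta\log p_\theta+\nabla_\theta\log p_\theta\,\nabla_\theta\log p_\theta^\top\big)$. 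A direct computation with $z=(u-\mu)/\sigma$ gives $\nabla_\theta\log p_\theta=-\sigma^{-1}v(z)$ with $v(z)=(\Psi'(z),\,1+z\Psi'(z))^\top$, and $\nabla^2_\theta\log p_\theta=\sigma^{-2}M(z)$, where $M(z)$ is exactly the sum of the last two matrices in the definition of $H$. Hence $\sigma^2\nabla^2_\theta\log p_\theta+v(z)v(z)^\top=H(z)$.

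Next I would convert the integrals back to $\P$-expectations through the change of variables $u=\mu+\sigma x$, under which $p_\theta(u)\,\mathrm{d}u=\mathrm{d}\P(x)$ and $g_c(u,w)=g_c(\sigma x+\mu,w)$. This turns $\int g_c(u,w)\nabla_\theta p_\theta(u)\,\mathrm{d}u$ into $-\sigma^{-1}\E_{\P}[g_c(\sigma X+\mu,w)v(X)]$ and the analogous second-order integral into $\sigma^{-2}\E_{\P}[g_c(\sigma X+\mu,w)H(X)]$; taking the $\Q$-expectation over $w=Y$ yields \eqref{eq:grad1} and \eqref{eq:Hessian1}.

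For the cross term I would regard the two copies of the measure as carrying independent parameters $\theta_1$ (law of $X$) and $\theta_2$ (law of $Y$), write $\E_{\P_{\theta_1},\P_{\theta_2}}[g_c]=\int\!\!\int g_c(u,w)\,p_{\theta_1}(u)\,p_{\theta_2}(w)\,\mathrm{d}u\,\mathrm{d}w$, apply the first-order formula above to each density factor, and set $\theta_1=\theta_2=\theta$. This produces $\sigma^{-2}\E_{\P,\P}[g_c(\sigma X+\mu,\sigma Y+\mu)\,v(X)v(Y)^\top]$, and the scaling identity $g_c(\sigma X+\mu,\sigma Y+\mu)=\sigma^\alpha|X-Y|^\alpha\,\mathbb{I}(|X-Y|<c/\sigma)$ extracts the factor $\sigma^{\alpha-2}$, giving \eqref{eq:GradCross}.

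The step that actually requires the hypotheses is the justification of differentiating under the integral sign, i.e. producing a $\theta$-integrable dominating function for the first and second $\theta$-derivatives of $g_c(u,w)p_\theta(u)$ in a neighbourhood of $\theta$ (with $\sigma$ kept bounded away from $0$). Because $g_c$ grows like $|x-y|^\alpha$ while the factors $v$ and $H$ introduce $\Psi'$, $\Psi''$, $(\Psi')^2$ and polynomial weights up to $x^2$, the bounds needed are precisely the finiteness of $\E_\P[|X|^\alpha]$, $\E_\P[|X|^{\alpha+1}\Psi'(X)]$, $\E_\P[|X|^{\alpha+3}\Psi''(X)]$ and $\E_\P[|X|^{\alpha+3}(\Psi'(X))^2]$ guaranteed by Assumption~\ref{ass1}, together with the corresponding $\alpha$-moment of $\Q$; for $c<\infty$ the kernel is bounded by $c^\alpha$ and only the first-moment conditions are needed. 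I expect this domination and regularity bookkeeping, rather than the differentiation itself, to be the main obstacle; the discontinuity of $g_c$ at $|x-y|=c$ causes no difficulty, since the kernel is held fixed and only the smooth density is differentiated.
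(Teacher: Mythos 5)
Your proposal is correct and follows essentially the same route as the paper: both arguments differentiate only the density $p_\theta$ under the integral sign (never the kernel $g_c$), convert back to $\P$-expectations by the substitution $u=\mu+\sigma x$, and justify the interchange of limit and integral by dominating $g_c(\sigma x+\mu,y)$ with $C\left(\sigma^{\alpha}|x|^{\alpha}+|\mu|^{\alpha}+|y|^{\alpha}\right)$ and invoking the moment conditions of Assumption~\ref{ass1}. Your use of the logarithmic-derivative identities $\nabla_\theta p_\theta=p_\theta\nabla_\theta\log p_\theta$ and $\nabla^2_\theta p_\theta=p_\theta\bigl(\nabla^2_\theta\log p_\theta+\nabla_\theta\log p_\theta\,\nabla_\theta\log p_\theta^{\top}\bigr)$ is merely a tidier bookkeeping for the same computation the paper carries out via the mean value theorem and dominated convergence.
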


\begin{proof}
To show \eqref{eq:grad1}, we start by considering the derivative with respect to $\sigma$.
Using the mean value theorem, we have
\begin{align*}
\frac{\pd}{\pd\sigma} \E_{\P_{\theta},\Q}[g_c(X,Y)] 
&= \lim_{h\rightarrow 0}
	\frac{1}{h} \iint  g_c(x,y)\left(\frac{1}{\sigma + h} p\left(\frac{x-\mu}{\sigma+h}\right) -\frac{1}{\sigma} p\left(\frac{x-\mu}{\sigma}\right)\right)dx \Q(dy)  \\
	&= \lim_{h\rightarrow 0} \iint  g_c(x,y) \frac{d}{d\sigma}\left.\frac{1}{\sigma} p\left(\frac{x-\mu}{\sigma}\right)\right|_{\sigma=\sigma+h^*} dx \Q(dy)
\end{align*}
for some $h^* \in (0,h)$. Evaluating the derivative and using the variable transformation $\tilde x = \frac{x-\mu}{\sigma}$ we get that  this expression equals
\begin{align}\label{eq:lemma1proof}
\lim_{h^* \rightarrow 0}
- \int \int  g_c( (\sigma + h^*) \tilde x + \mu,y)  \frac{1}{\sigma + h^*} \left(\Psi'(\tilde x) \tilde x + 1\right) p(\tilde x) d \tilde x \Q(dy).
\end{align}
Now since $g_c(x,y)\leq g(x,y)$ which is a negative definite kernel it follows  by the same argument as that used in the proof of Theorem \ref{the:kernelrob} that
$$
 g_c( (\sigma + h^*) \tilde x + \mu,y)  \leq C \left( (\sigma + h^*)^{\alpha} |\tilde x|^{\alpha} + |\mu|^{\alpha} +  |y|^{\alpha} \right)
$$
for some $C>0$. Combining this bound with Assumption \ref{ass1} shows that the integral in \eqref{eq:lemma1proof}  for each $h^*$ can be bounded by an integrable function. Thus, using the dominated convergence theorem we can move the limit into the integral, which gives that $$\frac{\pd}{\pd\sigma} \E_{\P_{\theta},\Q}[g_c(X,Y)] = -
\sigma^{-1}\E_{\P,\Q}\left[ g_c(\sigma X + \mu,Y) (\Psi'(X)  X + 1)\right].$$ 
The corresponding expression for the derivative with respect to $\mu$ can be shown by the same reasoning. Also the expressions in \eqref{eq:Hessian1} and \eqref{eq:GradCross} are shown in the same way, by differentiating twice and using the argument above. For brevity, we omit these calculations. 
\end{proof}

\begin{proof}[Proof of Propositions \ref{prop:kernel_scale} and  \ref{prop:scale_rob}]
As in the proof of Proposition \ref{prop:scale_ls}, Taylor expansion and using that $S$ is a proper scoring rule gives that $S$ satisfies \eqref{eq:taylor} for both propositions. 
We apply Lemma \ref{lem1} with $\mathbb{Q}= \mathbb{P}_{\theta}=\mathbb{Q}_{\theta}$, then \eqref{eq:Hessian1} gives 
 \begin{align*}	
\nabla^2_{\theta} S \left( \Q_{\theta },\Q_{\theta}\right)&=  \nabla^2_{\theta} \E_{\Q_{\theta},\Q_{\theta}}[g_c(X,Y)] =
\sigma^{-2}\E_{\Q,\Q}\left[
g(X,Y) H(X) \mathbb{I}\left(|X-Y|<\frac{c}{\sigma}\right) \right],
\end{align*}
which shows Proposition \ref{prop:scale_rob}. Setting $c=\infty$ in this expression gives 
 $$
\nabla^2_{\theta} S \left( \Q_{\theta },\Q_{\theta}\right)=  \nabla^2_{\theta} \E_{\Q_{\theta},\Q_{\theta}}[g_\infty(X,Y)] =
 \sigma^{\alpha-2}\E_{\Q,\Q}\left[
 g(X,Y) H(X) \right],
 $$
which shows Proposition \ref{prop:kernel_scale}.
\end{proof}

\begin{Lemma}
	\label{lemma:sks}
	Let $\P$ be a probability measure satisfying Assumption \ref{ass1}  for $\alpha\in (0,2]$, with $\E_{\P}[|X|^\alpha] <\infty$. If $g(x,y)=|x-y|^\alpha$ then Hessian with respect to the first argument of the standardized kernel scoring rule at $\Q=\P_{\theta}$ is
	$$
	\left.\nabla^2_{\theta}	S^{-\frac{1}{2} \log(x)}_g(\P_{\theta}, \Q)\right|_{\Q= \P_\theta} = \sigma^{-2} H_{\P}
	$$
	where $H_{\P}$ is a $2 \times 2$ matrix independent of $\mu$ and $\sigma$. 
\end{Lemma}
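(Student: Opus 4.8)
The plan is to differentiate the closed form of the standardized kernel score and to separate the two ways in which $\theta$ enters it. Writing $A(\theta)=\E_{\P_\theta,\P_\theta}[g(X,Y)]$ and $C(\theta,\Q)=\E_{\P_\theta,\Q}[g(X,Y)]$ for $g(x,y)=|x-y|^\alpha$, the score is
$$
\Ssker_{g}(\P_\theta,\Q) = -\tfrac12\log A(\theta) - \frac{C(\theta,\Q)}{A(\theta)} + 1,
$$
so that the Hessian in the first argument is $\nabla^2_\theta \Ssker_g(\P_\theta,\Q)=-\tfrac12\nabla^2_\theta\log A - \nabla^2_\theta(C/A)$. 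The crucial structural point is that $A$ depends on $\theta$ through \emph{both} copies of $\P_\theta$, while $C$ depends on $\theta$ only through its first argument. The first simplification is scale homogeneity: since $g(\mu+\sigma x,\mu+\sigma y)=\sigma^\alpha|x-y|^\alpha$, we have $A(\theta)=\sigma^\alpha a$ with $a=\E_{\P,\P}[|X-Y|^\alpha]$ a $\theta$-independent constant. Hence $\log A(\theta)=\alpha\log\sigma+\log a$ has a Hessian that is an explicit multiple of $\sigma^{-2}$ supported in the scale--scale entry, and $\nabla_\theta A$, $\nabla^2_\theta A$ are known multiples of $\sigma^{\alpha-1}$ and $\sigma^{\alpha-2}$ in that same entry.

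Next I would feed the cross term to Lemma~\ref{lem1}, taking $c=\infty$ so that $g_c=g$. Equations \eqref{eq:grad1} and \eqref{eq:Hessian1} give $\nabla_\theta C$ and $\nabla^2_\theta C$, and the evaluation at $\Q=\P_\theta$ is then performed by substituting $Y=\mu+\sigma Y'$ with $Y'\sim\P$: the factor $g(\sigma X+\mu,\,\mu+\sigma Y')=\sigma^\alpha g(X,Y')$ pulls out of every integral, reducing the derivatives to $\theta$-independent quantities times powers of $\sigma$. In particular $\nabla^2_\theta C|_{\Q=\P_\theta}=\sigma^{\alpha-2}M$ with $M=\E_{\P,\P}[g(X,Y)H(X)]$ constant. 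A convenient shortcut avoids ever naming the gradient vector explicitly: because $g$ is symmetric, differentiating $A$ in both slots versus $C$ in one slot are linked by $C(\theta,\P_\theta)=A(\theta)$ and $\nabla_\theta C|_{\Q=\P_\theta}=\tfrac12\nabla_\theta A$.

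Finally I would assemble the quotient Hessian from
$$
\nabla^2_\theta\!\left(\frac{C}{A}\right) = \frac{\nabla^2_\theta C}{A} - \frac{(\nabla_\theta C)(\nabla_\theta A)^\trsp + (\nabla_\theta A)(\nabla_\theta C)^\trsp}{A^2} - \frac{C\,\nabla^2_\theta A}{A^2} + \frac{2C\,(\nabla_\theta A)(\nabla_\theta A)^\trsp}{A^3},
$$
and substitute $C=A=\sigma^\alpha a$, $\nabla_\theta C=\tfrac12\nabla_\theta A$, together with the scaling values above. Each of the four terms then carries a net factor $\sigma^{-2}$: the $\sigma^\alpha$ in the denominators cancels against the $\sigma^{\alpha-2}$ (respectively the $\sigma^{2\alpha-2}$ over $\sigma^{2\alpha}$) produced in the numerators, leaving a $\theta$-independent $2\times2$ matrix built from $M/a$ and the scale--scale outer product. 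Adding the $-\tfrac12\nabla^2_\theta\log A$ contribution, which is itself $\tfrac{\alpha}{2\sigma^2}$ in the scale--scale entry, collapses everything to $\nabla^2_\theta \Ssker_g(\P_\theta,\Q)|_{\Q=\P_\theta}=\sigma^{-2}H_{\P}$, where $H_{\P}$ depends only on $\P$ and $\alpha$.

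The routine but error-prone part, and the one I expect to be the main obstacle, is the bookkeeping of the quotient-rule Hessian and the verification that the mixed powers of $\sigma$ collapse \emph{exactly} to $\sigma^{-2}$; the conceptual content is simply that homogeneity forces $A\propto\sigma^\alpha$ and the change of variables forces $\nabla^2_\theta C\propto\sigma^{\alpha-2}$. (Indeed, the same $\sigma^{-2}$ is morally forced by the equivariance identity $\Ssker_g(\P_{\mu,\sigma},\mu+\sigma y)=\Ssker_g(\P,y)-\tfrac\alpha2\log\sigma$, whose $y$-free correction term is exactly what makes the score locally scale invariant.) Throughout, differentiation under the integral sign is legitimized by Assumption~\ref{ass1}, which is already what Lemma~\ref{lem1} invokes.
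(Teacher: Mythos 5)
Your proposal is correct and follows essentially the same route as the paper's proof: differentiate the two terms of $\Ssker_g(\P_\theta,\Q)$ via the chain and quotient rules, evaluate at $\Q=\P_\theta$ using the symmetry relation $\nabla_\theta C|_{\Q=\P_\theta}=\tfrac12\nabla_\theta A$, and invoke Lemma~\ref{lem1} with $c=\infty$ to extract the powers of $\sigma$. The only cosmetic difference is that you make the homogeneity $A(\theta)=\sigma^\alpha a$ explicit to shortcut $\nabla_\theta A$ and $\nabla^2_\theta A$, whereas the paper keeps these abstract and instead observes that the $\mv{\E}_{\ddot P,P}$ terms cancel between the log-term and the quotient-term Hessians.
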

\begin{proof}
To simplity notation, let 
$\E_{P,Q} = \E_{\P_{\theta},\Q}[g(X,Y)]$,
	$\mv{\E}_{\dot P,Q} = 	\nabla_{\theta} \E_{\P_{\theta},\Q}[g(X,Y)]$, and
	$\mv{\E}_{Q,\dot P} = 	\nabla_{\theta} \E_{\Q,\P_{\theta}}[g(X,Y)]$.
	Now, straightforward calculations give that
	\begin{align*}
	\nabla_{\theta} \frac{1}{2}\log(\E_{P,P}) &= \frac{1}{\E_{P,P}} \mv{E}_{\dot P,P},\\
	\nabla^2_{\theta} \frac{1}{2}\log(\E_{P,P}) &= -\frac{2}{\E^2_{P,P}} \mv{E}_{\dot P,P} \mv{E}_{\dot P,P}^T
	+ \frac{1}{\E_{P,P}} \mv{E}_{\dot P, \dot P}
	+  \frac{1}{\E_{P,P}} \mv{E}_{\ddot P,P},
	\end{align*}
and
\begin{align*}
\nabla_{\theta}\frac{\E_{P,Q}}{\E_{P,P}} =&
\frac{1}{\E_{P,P}}  \mv{\E}_{\dot P,Q} - \frac{2\E_{P,Q}}{\E^2_{P,P}} \mv{\E}_{\dot P,P} , \\
	\nabla^2_{\theta} \frac{\E_{P,Q}}{\E_{P,P}} =& 
	-\frac{2}{\E^2_{P,P}}  \mv{\E}_{ \dot P,Q} \mv{E}^T_{\dot P,P}  -\frac{2}{\E^2_{P,P}} \mv{\E}_{P,Q}\mv{\E}_{\dot P,P}^T  - \frac{2\E_{P,Q}}{\E^2_{P,P}} \mv{\E}_{\dot P, \dot P} 
	\\
	&+\frac{2^3\E_{P,Q}}{\E^3_{P,P}} \mv{\E}_{\dot P,P}\mv{\E}_{\dot P,P}^T  +
	  \frac{1}{\E_{P,P}}  \mv{\E}_{ \ddot P,Q} - \frac{2\E_{P,Q}}{\E^2_{P,P}} \mv{\E}_{\ddot P,P}.
\end{align*}
Evaluating the last term at $Q=P$ gives
\begin{align*}
\left.\nabla^2_{\theta} \frac{\E_{P,Q}}{\E_{P,P}}\right|_{Q=P} =& 
-\frac{2}{\E_{P,P}}  \mv{\E}_{ \dot P, \dot P}  +\frac{4}{\E^2_{P,P}} \mv{\E}_{\dot P,P}\mv{\E}_{\dot P,P}^T  -\frac{1}{\E_{P,P}}  \mv{\E}_{ \ddot P,P}.
\end{align*}
Putting it all together yields
\begin{align*}
\left.\nabla^2_{\theta}\left(	S^{-\frac{1}{2} \log(x)}_g(\P_{\theta}, \Q) \right)\right|_{\Q = \P_{\theta}} =
\frac{1}{\E_{P,P}}  \mv{\E}_{ \dot P, \dot P}  -\frac{2}{\E^2_{P,P}} \mv{\E}_{\dot P,P}\mv{\E}_{\dot P,P}^T.  
\end{align*}
The result follows since  $\mv{\E}_{ \dot P,\dot P}$ is given by \eqref{eq:GradCross}  and $\mv{\E}_{\dot P,P}$ is given by \eqref{eq:grad1} with $c=\infty$.
\end{proof}

\begin{proof}[Proof of Proposition \ref{prop:scale_stand}]
	As in the proof of Proposition \ref{prop:scale_ls}, Taylor expansion and using that $S$ is a proper scoring rule gives that the score satisfied \eqref{eq:taylor}.
	The results now follows from Lemma \ref{lemma:sks}, since 
	$
	\nabla^2_{\theta}  S(\Q_{\theta},\Q)|_{\Q=\Q_{\theta}} = \frac{1}{\sigma^2}H_{\Q}
	$
	for some $2 \times 2$ matrix $H_{\Q}$ independent of $\theta$. 
\end{proof}

	\end{appendix}

\end{document}